\DeclareSymbolFont{cyrletters}{OT2}{wncyr}{m}{n}
\numberwithin{equation}{section} \numberwithin{figure}{section}
\DeclareMathOperator*{\Osum}{\sum{}^*}
\DeclareSymbolFont{cyrletters}{OT2}{wncyr}{m}{n}
\DeclareMathSymbol{\Sha}{\mathalpha}{cyrletters}{"58}
\DeclareMathSymbol{\Be}{\mathalpha}{cyrletters}{"42}
\newcommand{\x}{\mathbf{x}}
\newcommand\F{\mathbb{F}}
\renewcommand\P{\mathbb{P}}
\newcommand\Z{\mathbb{Z}}
\newcommand\N{\mathbb{N}}
\newcommand\Q{\mathbb{Q}}
\newcommand\R{\mathbb{R}}
\renewcommand{\mod}[1]{ \ \left(\textnormal{mod}\ #1\right)}
\newcommand{\md}[1]{  \left(\textnormal{mod}\ #1\right)}
\renewcommand{\l}{\left}
\renewcommand{\r}{\right}
\renewcommand{\b}{\mathbf} 
\renewcommand{\c}{\mathcal} 
\renewcommand{\leq}{\leqslant}
\renewcommand{\geq}{\geqslant}
\renewcommand{\#}{\sharp}
\newcommand{\p}{\mathfrak{p}}
\newtheorem{lemma}{Lemma}
\newtheorem{theorem}[lemma]{Theorem}
\theoremstyle{definition}
\newtheorem{definition}[lemma]{Definition}
\newtheorem{remark}[lemma]{Remark}
\newtheorem*{ack}{Acknowledgements}
\newtheorem*{notation}{Notation}
\numberwithin{lemma}{section}
\begin{document}

\vspace{-0,5cm}

\title
{$6$-torsion and integral points on quartic surfaces}
  
\begin{abstract} 
We prove   matching upper and lower bounds for   the average   of the  $6$-torsion  of class groups of quadratic fields.
Furthermore, we count the number of integer solutions on an affine quartic surface.
\end{abstract}

\author{S. Chan} 
\address{
ISTA
 \\Am Campus 1 \\
3400 Klosterneuburg \\
Austria}
\email{stephanie.chan@ist.ac.at}

\author{P. Koymans} 
\address{Institute for Theoretical Studies\\
ETH Z\"urich \\
8092\\
Switzerland}
\email{peter.koymans@eth-its.ethz.ch}

\author{C. Pagano} 
\address{
Department of Mathematics
\\ Concordia University 
 \\ Montreal 
 \\  H3G 1M8   \\  Canada}   
\email{carlo.pagano@concordia.ca}

\author{E. Sofos} 
\address{
Department of Mathematics\\
Glasgow University  
\\ G12~8QQ \\ UK}
\email{efthymios.sofos@glasgow.ac.uk}

\subjclass[2020]
{
11N37 
(primary), 
11R29,  
11D45   
(secondary).}

   \vspace{-1cm}

\maketitle

\thispagestyle{empty}

\tableofcontents

\vspace{-1,3cm}

\section{Introduction}   
One of the main invariants of 
class groups of   quadratic fields $\mathbb Q(\sqrt D)$ is
the size $h_n(D)$ of their  $n$-torsion.
It has been investigated by several mathematicians:
By the work of Gauss~\cite{gauss}  in $1801$ the average of $h_2(D)$ for $D<0$
  is a constant multiple of $\log |D|$ when ordering 
the number fields by $-D$. Davenport and Heilbronn~\cite{MR491593} proved in $1971$ 
that $h_3(D)$ has a constant average, while, 
Fouvry and Kl\"uners~\cite{MR2276261, FK2}  in $2007$ showed that $h_4(D)$ is on average a constant multiple of $\log |D|$. 
 The influential work of Smith~\cite{smith}
in $2017$ established the complete distribution of  $h_{2^k}(D)$. There are no other values of $n$ for which the right order of magnitude 
is known. For general $n$, there is     work on bounds for $h_n(D)$ on average by Soundararajan~\cite{MR1766097}, Heath-Brown--Pierce~\cite{MR3692746}, 
Frei--Widmer~\cite{MR4238263} and Koymans--Thorner~\cite{KT}.

The Cohen--Lenstra conjectures~\cite{MR756082} predict that $h_n(D)$ is of constant average 
for  $ n$ odd and is  $\log |D|$ on average  for  $n$ even.
Let $\mathcal{D}^{+}(X)$ and $\mathcal{D}^{-}(X)$ be the set of respectively positive and negative fundamental discriminants with absolute value up to $X$. 
In this paper we establish the right order of magnitude   for the $6$-torsion:  
\begin{theorem}
\label{t6Torsion}
For all $X\geq 5 $ we have 
\[
X \log X \ll \sum_{\substack{D \in \mathcal{D}^+(X)}} h_6(D) \ll X \log X
\hspace{0.8cm} \mathrm{and} \hspace{0.8cm}
X \log X \ll \sum_{\substack{D \in \mathcal{D}^-(X)}} h_6(D) \ll X \log X
.\]\end{theorem}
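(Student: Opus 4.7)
The starting point is the splitting $h_6(D) = h_2(D)\,h_3(D)$, valid because the $6$-torsion of any finite abelian group decomposes as the direct sum of its $2$- and $3$-torsion subgroups. The lower bound is then immediate: $h_6(D) \geq h_2(D)$ together with Gauss's classical asymptotic $\sum_{D \in \mathcal{D}^{\pm}(X)} h_2(D) \asymp X \log X$ (genus theory, valid for both signs of $D$) yields the $X\log X$ lower bound. The substance of the theorem is the matching upper bound $\sum h_2(D)\,h_3(D) \ll X\log X$.

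For the upper bound I would first use genus theory to replace $h_2(D)$ by $2^{\omega(D)}$ up to an absolute constant, and then invoke the Delone--Faddeev correspondence, which identifies nontrivial $3$-torsion classes in $\Cl(D)$ with $\mathrm{GL}_2(\Z)$-orbits of irreducible integral binary cubic forms $f(x,y) = ax^3 + bx^2 y + cxy^2 + dy^3$ whose discriminant
\[
\Delta(a,b,c,d) = 18abcd - 4b^3 d + b^2 c^2 - 4ac^3 - 27 a^2 d^2
\]
equals $D$. Expanding $2^{\omega(D)} = \sum_{e\mid D,\,\mu^2(e)=1} 1$ and interchanging summation, the theorem is reduced to the uniform estimate
\[
\sum_{\substack{e \leq X \\ \mu^2(e)=1}}\,\#\bigl\{\,[f]\ :\ f \text{ irreducible},\ e \mid \Delta(f),\ |\Delta(f)| \leq X\,\bigr\} \ll X \log X.
\]
The inner cardinality counts $\mathrm{GL}_2(\Z)$-orbits of integer points on the affine quartic hypersurface $\{\Delta \equiv 0 \pmod e\}$ in a box of volume $\asymp X$, precisely the quartic surface of the title. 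The target is an upper bound of the shape $(X/e)\,\rho(e)$ with a bounded multiplicative $\rho(e)$ whose local factor $\rho(p)$ is controlled by the $\mathbb{F}_p$-point count of $\{\Delta \equiv 0 \pmod p\}$; Lang--Weil together with an explicit analysis of the singular locus of the quartic yields $\rho(p) = 1 + O(p^{-1/2})$, whence $\sum_{e} \rho(e)/e = O(\log X)$.

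The principal obstacle is extending this count uniformly in $e$ all the way to $e \asymp X$. Classical Davenport--Heilbronn asymptotics handle each fixed $e$, and Belabas--Bhargava--Pomerance-style tail estimates extend this to $e \leq X^{\theta}$ for some $\theta < 1$, but the range $X^{\theta} < e \leq X$ lies outside existing technology and forces a genuinely geometric analysis of integer points on the defining quartic. I expect to handle it via a Selberg-type sieve on the space of integral binary cubic forms, combined with sharp pointwise upper bounds for $\#\{(a,b,c,d)\in\Z^4 : \Delta(a,b,c,d) = D_0\}$ uniform in $D_0$, and with a careful removal of contributions from reducible forms and from cubic forms attached to non-maximal orders. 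The geometric-analytic treatment of the quartic surface is where the main difficulty of the proof resides.
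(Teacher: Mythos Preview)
Your lower bound argument ($h_6(D)\geq h_2(D)$ plus genus theory) is exactly what the paper does. For the upper bound your opening moves also agree with the paper: write $h_6(D)=h_2(D)h_3(D)$ and replace $h_2(D)$ by $2^{\omega(D)}$ (the paper passes through the narrow class group to make this literal). The divergence comes immediately after.

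You expand $2^{\omega(D)}=\sum_{e\mid D,\,\mu^2(e)=1}1$ and interchange, which commits you to controlling
\[
\#\{[f]:\ e\mid \Delta(f),\ |\Delta(f)|\leq X\}
\]
uniformly for \emph{all} $e\leq X$. You then correctly observe that known Davenport--Heilbronn-with-congruence results only reach $e\leq X^{\theta}$ for some $\theta<1$, and you flag the range $X^{\theta}<e\leq X$ as ``outside existing technology''. Your proposed fix (a Selberg-type sieve on cubic forms plus sharp pointwise bounds for $\#\{\Delta(f)=D_0\}$) is not a proof; no known pointwise bound on $3$-torsion is strong enough to close this range, and the sketch does not indicate how the sieve would bypass that. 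As written, this is a genuine gap.

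The paper never enters that range. Instead of expanding $2^{\omega}$ as a divisor sum, it treats $f(n)=2^{\omega(n)}$ as a multiplicative function in the class $\mathcal M(A,\epsilon,C)$ and applies the Nair--Tenenbaum-type upper bound from the companion paper (Theorem~\ref{tMain}) to the sequence $c_D=|D|$ weighted by $\chi_T(D)=(h_3(D)-1)\mathds 1_{|D|\leq T}$. The crucial feature of that machinery is that it requires equidistribution only for moduli $q\leq M(T)^{\theta}$ for \emph{some} fixed $\theta>0$; Belabas's uniform Davenport--Heilbronn estimate (Lemma~\ref{tBelabas}) supplies this with $\theta=1/30$. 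The contribution of large divisors of $D$ is absorbed internally by the Nair--Tenenbaum argument, not by any direct counting of cubic forms. So the obstacle you identified is real for your approach, but the paper's route sidesteps it entirely by trading the full-range divisor expansion for a short-range level-of-distribution input plus the multiplicative-function technology.

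(As an aside, the ``quartic surface'' in the title is the affine surface $x_1^2x_2^2+x_3^2+x_4^2=N$ of \S\ref{s:affinemanin}, a separate application of the same machinery; it is not the discriminant hypersurface of binary cubic forms.)
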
 
This marks the first time that  Nair--Tenenbaum
techniques are applied in arithmetic statistics, clarified in the subsequent remark:
\begin{remark}
[Idea of the proof of Theorem~\ref{t6Torsion}]
Using the Davenport--Heilbronn parametrisation
we turn the sum $\sum_D h_6(D)$ into an average of the   function $2^{\omega(m)}$ over the values $m$ assumed by  
  a   polynomial in $4$ variables, where the integer vectors lie  in a  subset of $\mathbb R^4$ with   spikes.
This average is a special instance of sums of the following form:
 \begin{equation}\label{eq:sums of wolke type}
\sum_{a\in \mathcal A} f(c_a) \chi(c_a),\end{equation} where
\begin{itemize}
\item 
$\mathcal A$ is a countable set, 
\item 
$\chi:\mathcal A \to [0,\infty)$ is any function of finite support,
\item 
 $c_a$ is an ``equidistributed'' sequence of positive integers,
\item 
$f$ is a non-negative arithmetic function being multiplicative or   more general.
\end{itemize}In our companion paper~\cite{companion} we prove upper bounds for such sums; here
we provide its applications.
\end{remark}
  \subsection{Applications to arithmetic statistics}
The following is a more general version of Theorem~\ref{t6Torsion} on mixed moments:
\begin{theorem}
\label{thm:mixedmoments}
Fix any $s>0$. Then for all $X\geq 5 $ we have 
\[
X(\log X)^{2^s-1} \ll \sum_{\substack{D \in \mathcal{D}^+(X)}} h_2(D)^s h_3(D) \ll X (\log X)^{2^s-1}
\]
and
\[
X (\log X)^{2^s-1} \ll \sum_{\substack{D \in \mathcal{D}^-(X)}} h_2(D)^s h_3(D)  \ll X (\log X)^{2^s-1}
,\] where the implied constant depends at most on $s$.
\end{theorem}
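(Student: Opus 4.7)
The plan is to run the same argument that proves Theorem~\ref{t6Torsion} (which is the $s=1$ case, since $h_6(D)=h_2(D)h_3(D)$), replacing the weight $2^{\omega(\cdot)}$ by $2^{s\omega(\cdot)}$ throughout. First I would appeal to Gauss's genus theory, which gives $h_2(D)=2^{\omega(D)+O(1)}$ for fundamental discriminants $D$, hence $h_2(D)^s\asymp_s 2^{s\omega(D)}$. It thus suffices to show
\[
\sum_{D\in\mathcal{D}^{\pm}(X)} 2^{s\omega(D)} h_3(D) \asymp_s X(\log X)^{2^s-1}.
\]

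Next I would apply the Delone--Faddeev parametrisation underlying the work of Davenport--Heilbronn: for non-trivial $D$, the quantity $h_3(D)-1$ equals the number of $\mathrm{GL}_2(\mathbb{Z})$-orbits of irreducible, maximal integral binary cubic forms $F(x,y)=ax^3+bx^2y+cxy^2+dy^3$ with discriminant $\Delta(F)=D$. Unfolding the orbits against a fundamental domain for this action reduces the problem, up to permissible error, to estimating
\[
\sum_{(a,b,c,d)\in\Omega_X\cap\mathbb{Z}^4} 2^{s\omega(\Delta(a,b,c,d))},
\]
where $\Omega_X$ is the Davenport--Heilbronn fundamental domain intersected with $\{|\Delta|\leq X\}$. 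This is the same cuspidal region with spikes alluded to in the remark following Theorem~\ref{t6Torsion}.

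For the upper bound, this sum fits directly into the framework~\eqref{eq:sums of wolke type} with $\mathcal{A}=\Omega_X\cap\mathbb{Z}^4$, $c_a=|\Delta(a)|$, and $f(n)=2^{s\omega(n)}$. The function $f$ is non-negative and multiplicative, with summatory function $\sum_{n\leq Y}f(n)\asymp_s Y(\log Y)^{2^s-1}$, so the main theorem of~\cite{companion} applied to this configuration yields the claimed upper bound $X(\log X)^{2^s-1}$. For the matching lower bound, I would restrict $\Omega_X$ to a compact sub-box bounded away from the cusps (so that $\Delta$ varies in a prescribed fashion), discard by positivity the contribution of all other $(a,b,c,d)$, and apply a direct Chebyshev-type lower bound for the restricted average of $2^{s\omega(\cdot)}$ over the polynomial values; positivity of $h_3(D)$ prevents any loss.

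The main obstacle is the upper bound. The cuspidal spikes of $\Omega_X$ are the classical source of analytic difficulty in Davenport--Heilbronn-type counts, and the weight $2^{s\omega(\cdot)}$ for arbitrary real $s>0$ cannot be reduced to divisor-function averages (as it could for $s\in\mathbb{Z}_{\geq 0}$). It is precisely this combination---Nair--Tenenbaum averages of multiplicative weights over polynomial values on non-standard, spiky regions---that the companion paper~\cite{companion} is designed to handle, and this is where the bulk of the work will be concentrated.
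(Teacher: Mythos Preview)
Your proposal is correct in its essential strategy---reduce via genus theory to a Nair--Tenenbaum average of $2^{s\omega(\cdot)}$ and invoke the companion paper---but the execution in the paper is organised differently and more economically on both the upper and lower bounds.

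For the upper bound, the paper does \emph{not} unfold into the Davenport--Heilbronn fundamental domain and take $c_a=|\Delta(a,b,c,d)|$. Instead it applies Theorem~\ref{tMain} with $\mathcal{A}$ the set of fundamental discriminants themselves, $c_D=|D|$, and the weight $\chi_T(D)=(h_3(D)-1)\mathds 1_{|D|\leq T}$. The required equidistribution input $C_q(T)=\sum_{q\mid D}(h_3(D)-1)$ is then precisely Belabas's level-of-distribution result (Lemma~\ref{tBelabas}), which already packages the geometry-of-numbers in the cuspidal region, the reducibility removal, and the maximality sieve. One then uses $f(n)=2^{s\omega(n)}$ together with the elementary bound $\sum_{a\leq X}2^{s\omega(a)}/a\ll(\log X)^{2^s}$. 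Your route---working directly with lattice points in $\Omega_X$---would also succeed, but the ``up to permissible error'' you invoke hides exactly the work that Belabas's theorem has already done, and you would in effect be re-deriving it with the extra weight present.

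For the lower bound, the paper simply uses $h_3(D)\geq 1$, which reduces the claim to the classical estimate $\sum_{D\in\mathcal D^{\pm}(X)}2^{s\omega(D)}\gg X(\log X)^{2^s-1}$ over fundamental discriminants; no restriction to a sub-box of the fundamental domain is needed. Your sub-box argument would give a lower bound for $\sum(h_3(D)-1)2^{s\omega(D)}$, which is stronger than required but unnecessary for Theorem~\ref{thm:mixedmoments}.
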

\begin{remark}[Independence]
Theorems~\ref{t6Torsion} and~\ref{thm:mixedmoments}
are  the first results establishing the right order of magnitude for $h_n$ when 
 $n$ has  more than one  prime factor.
 Since $h_6=h_2 h_3$, the underlying problems   are   related to independent behavior of $h_2$ and $h_3$.
One cannot exclude a priori  that $h_2(D)$ and $h_3(D)$ correlate in a way that $h_{3}(D)$ attains   very large values when  $h_{2}(D)$ is large.
\end{remark}

Davenport and Heilbronn~\cite{MR491593} proved that $h_3(D)$
has constant average when $D$ ranges in $\mathcal D^+(X)$.
We show that the $D$ responsible for this fact are those for which 
  $h_2(D)$  is  essentially   $(\log |D|)^{\log 2}$. For a real number  $\varepsilon>-1$ define 
\begin{equation}\label{def:meshuggah}\mathfrak c(\varepsilon):= -\frac{ \varepsilon}{   1+\varepsilon  } + \log (1+\varepsilon)\end{equation} 
and note that $\mathfrak c(\varepsilon)>0$.
\begin{theorem} \label{thm:i dont believe how good this is!!!!!!!!!} 
For every fixed constants  $\varepsilon_1 \in (0,1) $ and $ \varepsilon_2>0$
and all  $X,z_3,z_4\geq 1$  with  
$  (\log X)^{(1+\varepsilon_2) \log 2}\leq z_4$
 we have 
$$\sum_{\substack{D \in \mathcal{D}^+(X)\cup \mathcal{D}^-(X)\\h_2(D)\notin (z_3,z_4)}} \left(h_3(D)-1\right)
\ll X\left(
\left(\frac{z_3^{1 /\log 2}}{(\log X)^{(1-\varepsilon_1)}}\right)^{\log(1+\varepsilon_1)}
+\frac{1}{z_4^{\mathfrak{c}(\varepsilon_2) /\log 2}}
\right)
,$$ 
where the implied constant depends only on $\varepsilon_i$.
\end{theorem}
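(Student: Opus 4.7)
The plan is to split the sum as $\Sigma_{\mathrm{lo}} + \Sigma_{\mathrm{hi}}$ according to whether $h_2(D) \leq z_3$ or $h_2(D) \geq z_4$, and to treat each piece by first converting the $h_2$-constraint into a constraint on $\omega(|D|)$ via genus theory, then parametrising $h_3(D)-1$ by integer binary cubic forms, and finally applying a Rankin-style inequality together with the Nair--Tenenbaum bound of the companion paper. By Gauss's genus theory, a fundamental discriminant $D$ satisfies $h_2(D) = 2^{\omega(|D|) - t(D)}$ with $t(D) \in \{1,2\}$ depending only on $\mathrm{sign}(D)$ and $D \bmod 4$, so the two sides of the split reduce respectively to the constraints $\omega(|D|) \leq K_3 := (\log z_3)/\log 2 + O(1)$ and $\omega(|D|) \geq K_4 := (\log z_4)/\log 2 - O(1)$.

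Next, the Davenport--Heilbronn parametrisation identifies the nontrivial elements of $\mathrm{Cl}(\mathbb{Q}(\sqrt D))[3]$ with $\mathrm{GL}_2(\mathbb{Z})$-equivalence classes of irreducible integer binary cubic forms whose discriminant is a fixed multiple of $D$. A fundamental domain $\mathcal{F}$ for this action with $|\disc F| \leq X$ is, by Davenport's reduction theory, a spiked region of $\mathbb{Z}^4$ of cardinality $\asymp X$, so
\[
\Sigma_{\mathrm{lo}} \ll \sum_{F \in \mathcal{F}} \mathbbm{1}\bigl[\omega(|\disc F|) \leq K_3\bigr], \qquad \Sigma_{\mathrm{hi}} \ll \sum_{F \in \mathcal{F}} \mathbbm{1}\bigl[\omega(|\disc F|) \geq K_4\bigr].
\]
This is exactly the shape $\sum_{a \in \mathcal{A}} f(c_a) \chi(c_a)$ of the companion paper, with $\mathcal{A} \subset \mathbb{Z}^4$ and $c_a = |\disc F|$ a specific degree-four polynomial evaluation.

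Then apply the elementary inequalities $\mathbbm{1}[\omega \leq K_3] \leq (1+\varepsilon_1)^{K_3 - \omega}$ and $\mathbbm{1}[\omega \geq K_4] \leq (1+\varepsilon_2)^{\omega - K_4}$ to pull out the Rankin prefactors $(1+\varepsilon_1)^{K_3} = z_3^{\log(1+\varepsilon_1)/\log 2}$ and $(1+\varepsilon_2)^{-K_4} = z_4^{-\log(1+\varepsilon_2)/\log 2}$, leaving multiplicative functions of $|\disc F|$ to which the companion paper's Nair--Tenenbaum-type estimate applies; the resulting bounds have the shape $X(\log X)^{c_i(\varepsilon_i)}$ with $c_i$ an explicit expression in $\varepsilon_i$. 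For $\Sigma_{\mathrm{lo}}$ the assembly directly matches the first announced term. For $\Sigma_{\mathrm{hi}}$, the hypothesis $z_4 \geq (\log X)^{(1+\varepsilon_2)\log 2}$ is precisely the threshold at which the $(\log X)^{\varepsilon_2}$ growth coming from $\sum_F (1+\varepsilon_2)^{\omega(|\disc F|)}$ is absorbed into a negative power of $z_4$, and the resulting exponent simplifies via $\log(1+\varepsilon_2) - \varepsilon_2/(1+\varepsilon_2) = \mathfrak{c}(\varepsilon_2)$ into $\mathfrak{c}(\varepsilon_2)/\log 2$.

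The main obstacle is in the interface between the parametrisation and the Nair--Tenenbaum bound: one must verify that Davenport's reduction domain $\mathcal{F}$ (with its non-compact spikes in the ``shallow'' directions) and the discriminant polynomial together satisfy the hypotheses of the companion paper uniformly in $\varepsilon_i$ throughout the stated ranges, and that the implied constants do not blow up as $\varepsilon_1 \to 1^-$ or $\varepsilon_2 \to 0^+$. Granted this, the final reduction of the Rankin-weighted multiplicative sums to the announced bounds is a direct algebraic computation.
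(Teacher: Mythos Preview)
Your approach is correct in outline and reaches the same destination, but the paper takes a more streamlined route. Rather than unpacking the Davenport--Heilbronn parametrisation and working with the discriminant polynomial on the spiked fundamental domain in $\mathbb{Z}^4$, the paper keeps the abstraction one level higher: it takes $\mathcal{A}$ to be the set of fundamental discriminants themselves, $c_D = |D|$, and puts the arithmetic weight into $\chi_T(D) = (h_3(D)-1)\mathds{1}_{|D| \leq T}$. The level-of-distribution hypothesis of Definition~\ref{def:levdistr} is then supplied directly by Belabas's theorem (Lemma~\ref{tBelabas}), which already gives $C_d(T) = h(d)M(T) + (\text{error})$ with $h(q) = g_1(q)/q$; all of the cubic-form geometry is thereby hidden inside a black box. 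The Rankin step you describe is likewise packaged into a general tail-bound statement (Theorem~\ref{tDecay}), so the actual proof reduces to a few lines verifying its hypotheses with $\psi=\omega$, $r=\tilde A=\tilde A_1=1$, and $z_i=(\log z_{i+2})/\log 2$. Your more hands-on route would also work and is in fact closer to the heuristic sketch of Remark~1.2, but it forces you to deal with the cusps of Davenport's domain and the maximality sieve yourself---exactly the obstacle you flag---whereas the paper bypasses this entirely by citing Belabas.
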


We will use the companion paper 
to give certain bounds for the frequency of atypical values of additive functions in Theorem~\ref{tDecay}.
This has certain algebraic applications that we describe now.
  
Malle's conjecture~\cite{MR2068887} regards the number of extensions $K/\mathbb Q$ 
with prefixed Galois group when ordered by their discriminant $\Delta_K$.
The case of the full symmetric group $S_n$
has attracted special attention; here, the largest $n$ for which asymptotics  are known
is $n=5$ due to Bhargava~\cite{MR2745272}; this was later extended and generalized by 
Shankar--Tsimerman~\cite{MR3264252} and Bhargava--Shankar--Wang~\cite{arXiv:1512.03035}.

We will prove that for the vast majority of  $S_5$-extensions, the cardinality of ramified primes can only lie in a specific interval.
This  was first studied by Lemke Oliver--Thorne~\cite{LOT}, who proved that the cardinality of ramified primes is distributed 
according to the Gaussian distribution of approximate centre    $\log \log  |\Delta_K|$ and   length $(\log \log  |\Delta_K|)^{1/2}$.
Our work complements this by proving that the cardinality can only lie outside the interval with probability that 
decays exponentially fast. 

\begin{theorem}
\label{thm:tailstailstails}
For every fixed constants  $\varepsilon_1 \in (0,1) $ and $ \varepsilon_2>0$
and all  $X,z_1,z_2\geq 1$  with  
$ (1+\varepsilon_2)\log\log X\leq z_2$
 we have
$$
\#\{K \textup{ quintic } S_5 : |\Delta_K |\leq X, \omega(\Delta_K) \geq z_2\}
\ll X\exp\left(- \mathfrak{c}(\varepsilon_2) z_2\right)
$$
and
\begin{align*}
\#\{K \textup{ quintic } S_5 :  |\Delta_K | & \leq X, \omega(\Delta_K) \leq z_1\}
\\ & \ll X \exp\left(-\log(1+\varepsilon_1)\left(\left(1-\varepsilon_1\right) \log\log X-z_1\right)\right)
,\end{align*} 
where the implied constants depend only on $\varepsilon_i$.
\end{theorem}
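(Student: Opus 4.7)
The plan is to combine Bhargava's geometry-of-numbers parametrization of $S_5$-quintic fields with the Nair--Tenenbaum-type bound~\eqref{eq:sums of wolke type} from the companion paper, and then conclude via Markov's inequality with an optimised choice of weight. By Bhargava's parametrization of quintic rings~\cite{MR2745272}, reinforced with the Bhargava--Shankar--Tsimerman~\cite{MR3264252} maximality sieve and the removal of non-$S_5$ Galois groups, the $S_5$-quintic fields $K$ with $|\Delta_K|\leq X$ inject (up to the $\mathrm{GL}_4(\mathbb Z)\times\mathrm{GL}_5(\mathbb Z)$-action) into integer points $v$ of a fixed fundamental domain in $\mathbb Z^4\otimes\wedge^2\mathbb Z^5\cong\mathbb Z^{40}$, with $\Delta(v)=\Delta_K$, where $\Delta$ is the invariant discriminant polynomial. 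Taking $\mathcal A$ to be the integer points in this domain, $c_v=\Delta(v)$, $\chi$ the indicator of $|\Delta(v)|\leq X$, and $f=\alpha^{\omega}$, the companion paper's bound~\eqref{eq:sums of wolke type} then yields, for any fixed $\alpha>0$,
$$
W_\alpha(X):=\sum_{\substack{K\textup{ quintic }S_5\\|\Delta_K|\leq X}}\alpha^{\omega(\Delta_K)}\ \ll_\alpha\ X(\log X)^{\alpha-1}.
$$

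With this in hand I would apply Markov's inequality. For the upper tail, fix $\alpha=1+\varepsilon_2$; then
$$
\#\{K:\omega(\Delta_K)\geq z_2\}\ \leq\ \alpha^{-z_2}W_\alpha(X)\ \ll\ X(\log X)^{\varepsilon_2}\exp(-z_2\log(1+\varepsilon_2)).
$$
The hypothesis $z_2\geq(1+\varepsilon_2)\log\log X$ lets me bound $\varepsilon_2\log\log X\leq z_2\varepsilon_2/(1+\varepsilon_2)$, and the definition~\eqref{def:meshuggah} of $\mathfrak c$ gives the rate $-z_2\mathfrak c(\varepsilon_2)$ as claimed. For the lower tail I would take $\alpha=1/(1+\varepsilon_1)\in(0,1)$; since $\alpha^\omega$ is decreasing, one again has $\#\{K:\omega(\Delta_K)\leq z_1\}\leq\alpha^{-z_1}W_\alpha(X)$, and combining $\alpha^{-z_1}=\exp(z_1\log(1+\varepsilon_1))$ and $(\log X)^{\alpha-1}=\exp(-\varepsilon_1/(1+\varepsilon_1)\log\log X)$ with the elementary inequality $\varepsilon_1/(1+\varepsilon_1)\geq(1-\varepsilon_1)\log(1+\varepsilon_1)$ on $(0,1)$ matches the stated form.

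The main obstacle is verifying that Bhargava's parametrization is genuinely compatible with~\eqref{eq:sums of wolke type}. The unweighted Bhargava--Shankar--Tsimerman count deals with the cusp of the fundamental domain by a delicate geometry-of-numbers argument tailored to the trivial weight; one has to check that upon inserting the ``spike'' weight $\alpha^{\omega(\Delta(v))}$, the cuspidal contribution, the maximal-order sieve, and the discrepancy between $\Delta(v)$ and $\Delta_K$ off the locus of maximal orders all remain harmless, so that the conclusion $W_\alpha(X)\ll X(\log X)^{\alpha-1}$ emerges with the expected $\alpha$-dependence. In essence, one must run the companion paper's framework through the specific $40$-dimensional ``spiked'' region furnished by Bhargava's fundamental domain; given that input the deduction of Theorem~\ref{thm:tailstailstails} is entirely formal.
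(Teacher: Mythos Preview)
Your approach is correct and is at heart the same idea as the paper's: establish the moment bound $\sum_K \alpha^{\omega(\Delta_K)}\ll_\alpha X(\log X)^{\alpha-1}$ via the companion paper's Nair--Tenenbaum machinery, then optimise $\alpha$ in Markov's inequality. The paper has already packaged the Markov step as Theorem~\ref{tDecay}, so the proof in \S\ref{ss:zone5please} just quotes it.

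The genuine difference is in how the equidistribution hypothesis is verified. You choose $\mathcal A$ to be the integer points in Bhargava's fundamental domain in $\mathbb Z^{40}$, which forces you to confront the cusp, the maximality sieve, and the discrepancy between $\Delta(v)$ and $\Delta_K$; as you correctly note, this is the real work in your route. The paper sidesteps all of this by taking $\mathcal A$ to be the set of quintic $S_5$ fields themselves, $c_K=|\Delta_K|$, and then importing the level of distribution
\[
C_d(T)=h(d)\,\tfrac{13}{120}T+O\bigl(T^{399/400}\bigr)\qquad(d\leq T^{3/400})
\]
as a black box from McGown--Thorne--Tucker~\cite[Theorem~6]{MTT}, with $h$ multiplicative satisfying $h(p)=1/p+O(1/p^2)$ and $h(p^e)\ll p^{-2}$ for $e\geq 2$. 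That single citation replaces the entire ``spiked $40$-dimensional region'' analysis you were bracing for. Your route would certainly work, but it reproves a special case of what \cite{MTT} already supplies; the paper's framing shows that the abstraction in Definition~\ref{def:levdistr} is designed precisely so that one can plug in such existing equidistribution results without revisiting the underlying geometry of numbers.
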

Since $\#\{K \textup{ quintic } S_5 : |\Delta_K |\leq X\}$ has order  $X$ due to Bhargava~\cite{MR2745272}, one sees from
Theorem~\ref{thm:tailstailstails}   that $\omega(\Delta_K)$ must typically lie in the interval $(z_1,z_2)$.
 
Malle's conjecture for cubic $S_3$ fields 
was first established by Davenport--Heilbronn~\cite{MR491593}. The 
 error term was later greatly improved by Bhargava--Shankar--Tsimerman~\cite{MR3090184}
and Taniguchi--Thorne~\cite{MR3121674}. Our next result shows that for $100\%$  of $S_3$ fields,
the number of ramified primes $\omega(\Delta_K)$ lies
in a prescribed interval, giving an analog of Theorem~\ref{thm:tailstailstails}.

\begin{theorem}
\label{thm:s3s3tailtail}
For every fixed constants  $\varepsilon_1 \in (0,1) $ and $ \varepsilon_2>0$
and all  $X,z_1,z_2\geq 1$  with 
 $ (1+\varepsilon_2)\log\log X\leq z_2$
 we have
$$
\#\{K \textup{ cubic } S_3 : |\Delta_K |\leq X, \omega(\Delta_K) \geq z_2\}|
\ll X\exp\left(- \mathfrak{c}(\varepsilon_2) z_2\right)
$$
and 
\begin{align*}\#\{K \textup{ cubic } S_3 : |\Delta_K |&\leq X, \omega(\Delta_K) \leq z_1\}
\\ & \ll X \exp\left(-\log(1+\varepsilon_1)\left(\left(1-\varepsilon_1\right) \log\log X-z_1\right)\right)
,\end{align*} 
 where the implied constants depend only on $\varepsilon_i$.
\end{theorem}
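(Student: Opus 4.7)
The plan is to combine the Davenport--Heilbronn parametrisation of cubic rings with Theorem~\ref{tDecay} (which delivers tail bounds for $\omega$ at polynomial values via the moment estimates of the companion paper), using a Chernoff--Markov argument whose optimisation produces the rate function $\mathfrak c$.

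First, by Davenport--Heilbronn, cubic $S_3$ fields $K$ with $|\Delta_K|\leq X$ are in correspondence, up to a local maximality sieve, with $\mathrm{GL}_2(\mathbb Z)$-equivalence classes of irreducible integer binary cubic forms $F(x,y)=ax^3+bx^2y+cxy^2+dy^3$ of discriminant
\begin{equation*}
\Delta(F)=-27a^2d^2+18abcd+b^2c^2-4b^3d-4ac^3,
\end{equation*}
satisfying $|\Delta_K|=|\Delta(F)|$ and $\omega(\Delta_K)=\omega(\Delta(F))$ on maximal orders. Passing to a Shintani--Bhargava fundamental domain $\mathcal R(X)\subset\mathbb R^4$ (a region with non-compact cuspidal spikes and volume $\asymp X$), the counting problem becomes one for integer points $(a,b,c,d)\in \mathcal R(X)\cap \mathbb Z^4$ subject to the maximality sieve. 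Weighting by $\lambda^{\omega(\Delta(F))}$ with $\lambda$ in a neighbourhood of $1$, the companion paper's moment bound (obtained from \eqref{eq:sums of wolke type} with $f=\lambda^\omega$) furnishes
\begin{equation*}
\sum_{(a,b,c,d)}\lambda^{\omega(\Delta(F))}\ll X(\log X)^{\lambda-1}.
\end{equation*}

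Chernoff's inequality together with the hypothesis $z_2\geq(1+\varepsilon_2)\log\log X$ and the choice $\lambda=1+\varepsilon_2$ then yields
\begin{align*}
\#\{K:|\Delta_K|\leq X,\ \omega(\Delta_K)\geq z_2\} &\leq \lambda^{-z_2}\sum\lambda^{\omega(\Delta_K)}\\
&\ll X\exp\!\left(-z_2\log(1+\varepsilon_2)+\tfrac{\varepsilon_2}{1+\varepsilon_2}z_2\right)\\
&=X\exp(-\mathfrak c(\varepsilon_2)z_2),
\end{align*}
by the definition \eqref{def:meshuggah} of $\mathfrak c$. For the lower tail with $\omega(\Delta_K)\leq z_1$, the dual inequality $\#\{\omega\leq z_1\}\lambda^{z_1}\leq\sum\lambda^{\omega}$ with $\lambda=1/(1+\varepsilon_1)\in(0,1)$ gives the bound $X(1+\varepsilon_1)^{z_1}(\log X)^{-\varepsilon_1/(1+\varepsilon_1)}$, which the elementary inequality $\varepsilon_1/(1+\varepsilon_1)\geq (1-\varepsilon_1)\log(1+\varepsilon_1)$ valid on $\varepsilon_1\in(0,1)$ converts into the stated form.

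The main obstacle is verifying the hypotheses of the companion paper's moment bound for the cubic discriminant polynomial $\Delta(F)$ on the Shintani fundamental domain (including its cuspidal spikes), and reconciling this with the Davenport--Heilbronn maximality sieve without breaking the Nair--Tenenbaum style equidistribution. This verification is essentially parallel to, and shares the bulk of its work with, the corresponding analysis for Theorem~\ref{t6Torsion}, where exactly the same quartic polynomial and region arise.
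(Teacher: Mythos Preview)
Your Chernoff/moment argument is correct and is precisely what Theorem~\ref{tDecay} packages, so the analytic core is sound. However, your route to the moment bound is more circuitous than the paper's and rests on a misreading of how Theorem~\ref{t6Torsion} is proved.

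The paper does \emph{not} work with the binary cubic discriminant polynomial $\Delta(F)$ on the Shintani fundamental domain, neither here nor for Theorem~\ref{t6Torsion}. Instead it takes $\mathcal A$ to be the set of cubic $S_3$ fields themselves, sets $c_K=|\Delta_K|$ and $\chi_T(K)=\mathds 1_{|\Delta_K|\leq T}(K)$, and imports the equidistribution hypothesis of Definition~\ref{def:levdistr} as a black box from Bhargava--Taniguchi--Thorne~\cite[Theorem~1.3]{BTT}: for $d\leq T^{1/20}$ one has $C_d(T)=h(d)\cdot\tfrac{1}{3\zeta(3)}T+O(T^{5/6})$ with $h$ multiplicative and $h(p)=1/p+O(1/p^2)$. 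Theorem~\ref{tDecay} then applies directly with $\psi=\omega$ and $r=\tilde A=\tilde A_1=1$. Likewise, the proof of Theorem~\ref{t6Torsion} imports Belabas's equidistribution result (Lemma~\ref{tBelabas}) rather than analysing $\Delta(F)$ on the fundamental domain; the ``quartic polynomial and spiky region'' analysis you describe does not appear in that proof.

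Your approach---verifying equidistribution for the quartic $\Delta(F)$ on the cuspidal Shintani region and then sieving for maximality---could in principle be made to work, but it is substantially harder: the spikes genuinely complicate lattice-point counting in progressions, and the maximality sieve is a local condition at every prime that must be carried through the Nair--Tenenbaum machinery. You identify this as the main obstacle but do not resolve it, and your appeal to the proof of Theorem~\ref{t6Torsion} for a parallel resolution is misplaced since that proof sidesteps the issue entirely. The paper's route buys a one-line verification of equidistribution; yours would buy nothing extra here, since the needed level of distribution is already available in the literature at the level of fields.
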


\subsection{Applications to Diophantine equations} We   count the number of integer solutions of certain Diophantine equations,
examples of which are the quartic affine surface $$x_1^2 x_2^2+x_3^2+x_4^2 =N$$ and the affine quartic threefold $x_1^2 x_2^2+x_3^2x_4^2+x_5^2 =N$. 
More generally, our work will cover  the equation  \begin{equation}\label{eq:freddy freeloader}(x_1\cdots x_k)^2+x_{k+1}^2+x_{k+2}^2=N,\end{equation} 
whose number of variables  is roughly half the degree of the equation.

For $N\in \N $ let  $$ L(1,\chi_{-N}) = \sum_{m=1}^{\infty}  \l(\frac{-N}{m}\r) \frac{1}{m} \hspace{1cm} \textrm{and} \hspace{1cm} 
\mathfrak b(N)=\prod_{p\mid N}\bigg(1+\l(\frac{-1}{p} \r)\frac{1}{p}\bigg)  .$$  \begin{theorem}\label{thm:well tempered clavier vol.1} 
Fix   $k\in \mathbb N$ and let  $N$ range through positive square-free integers $ 3 \md 8 $.
\begin{itemize} 
\item  The number of $\b x \in \Z^{k+2}$ satisfying~\eqref{eq:freddy freeloader}
 is $$\asymp \mathfrak b(N)^{k-1}  L(1,\chi_{-N})    N^{\frac{1}{2} } (\log N)^{k-1},$$ where the implied constant depends only on  $k$.
 \item  The number of $\b x \in \Z^{2k+1}$ satisfying $$ (x_1\cdots x_k)^2+ (x_{k+1}\cdots x_{2k})^2 +x_{2k+1}^2=N$$
 is $\asymp \mathfrak b(N)^{2(k-1)}  L(1,\chi_{-N})  N^{\frac{1}{2} } (\log N)^{2(k-1)}$, where the implied constant depends only on  $k$.
 \item  The number of $\b x \in \Z^{3k}$ satisfying  $$ (x_1\cdots x_k)^2+ (x_{k+1}\cdots x_{2k})^2 + (x_{2k+1}\cdots x_{3k})^2=N$$
is $\asymp \mathfrak b(N)^{3(k-1)}  L(1,\chi_{-N})    N^{\frac{1}{2} }(\log N)^{3(k-1)}$, where the implied constant depends only on  $k$.
\end{itemize} \end{theorem}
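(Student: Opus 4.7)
The plan is to reduce each count to a sum of multiplicative weights against polynomial values, then apply the companion paper~\cite{companion} for the upper bound and match this by a lower bound obtained by opening the sum-of-two-squares factor. For the first equation, substituting $u := x_1 \cdots x_k$, each nonzero $u$ with $|u| \leq \sqrt N$ arises as such a product in exactly $2^{k-1} d_k(|u|)$ ways, and the remaining pair $(x_{k+1}, x_{k+2})$ contributes $r_2(N - u^2)$. Thus, with the natural convention that $x_1, \dots, x_k$ are nonzero (which is what makes the count finite, since then each $|x_i| \leq |u| \leq \sqrt N$), the count equals
\[
2^{k} \sum_{1 \leq u \leq \sqrt N} d_k(u)\, r_2(N - u^2).
\]
For parts 2 and 3, analogous substitutions for each product block lead to $d_k$-weighted sums over representations of $N$ by the ternary form $u^2 + v^2 + w^2$.

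For the upper bound, $d_k$ is a Shiu-type multiplicative function and $r_2(n)/4 \leq 2^{\omega(n)}$ is multiplicative, so the sum fits the Nair--Tenenbaum framework of~\cite{companion}. Taking $\mathcal A = \Z \cap [1, \sqrt N]$, the equidistributed sequence $c_u = N - u^2$, and sieve weight $d_k(u)$, the companion theorem yields
\[
\sum_{u \leq \sqrt N} d_k(u) r_2(N - u^2) \ll \sqrt N \prod_{p} E_p(N),
\]
where $E_p(N)$ is the natural local Euler factor of the combined multiplicative weight at $p$. Splitting primes into $p \nmid N$ and $p \mid N$: the unramified Mertens product $\prod (1 - 1/p)^{-(k-1)} \asymp (\log N)^{k-1}$ absorbs the $d_k$-contribution, the Euler product $\prod_{p \nmid N} (1 + \chi_{-N}(p)/p)$ recovers $L(1, \chi_{-N})$, and the ramified primes contribute the correction $\mathfrak b(N)^{k-1}$, reconstructing the claimed bound.

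For the lower bound, open $r_2(N - u^2) = 4\sum_{d \mid N - u^2} \chi_{-4}(d)$ and exchange summations to obtain
\[
4 \sum_{d \leq N} \chi_{-4}(d) \sum_{\substack{u \leq \sqrt N \\ u^2 \equiv N \pmod{d}}} d_k(u).
\]
For $d \leq N^{1/2 - \delta}$ the inner sum is evaluated asymptotically by the Landau--Selberg--Delange method for $d_k$ in arithmetic progressions, giving a main term $\asymp \rho_N(d) \sqrt N (\log N)^{k-1}/d$ with $\rho_N(d) = \#\{x \bmod d : x^2 \equiv N\}$; summing this with the $\chi_{-4}$-twist over $d$ reconstitutes the Euler product of the upper-bound main term, yielding the matching lower bound. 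The tail $d > N^{1/2-\delta}$ is absorbed via dyadic application of the companion paper's upper bound. Parts 2 and 3 follow by iterating: fix the first product block, reduce to the equation in the remaining variables, and apply the above. Each iteration contributes a further factor $\mathfrak b(N)^{k-1} (\log N)^{k-1}$, producing the required $\mathfrak b(N)^{2(k-1)}$ and $\mathfrak b(N)^{3(k-1)}$ exponents.

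The main obstacle is the $N$-uniformity of the local factor $E_p(N)$ at primes $p \mid N$, where both $d_k$ and $r_2$ deviate from their generic behaviour and must combine precisely to yield $\mathfrak b(N)^{j(k-1)}$; a secondary difficulty is the lower bound for parts 2 and 3, where the ternary form couples two or three multiplicative weights and the iterated Selberg--Delange argument requires uniform control as the moduli vary in tandem.
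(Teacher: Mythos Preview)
Your reduction of each count to a $\tau_k$-weighted sum over representations of $N$ by the ternary form $y_1^2+y_2^2+y_3^2$ is correct and is exactly how the paper proceeds: all three bullet points are deduced from Theorem~\ref{thm:3sqrsmultiplc} with $f=\tau_k$ and $\alpha=1,2,3$ respectively. But from that point on your plan diverges from the paper's and runs into a genuine obstruction.

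The companion framework (Theorem~\ref{tMain}, Definition~\ref{def:levdistr}) demands a \emph{power-saving} level of distribution: $C_d(T)=h(d)M(T)(1+\cdots)+O(M^{1-\xi})$. With your choice $\mathcal A=[1,\sqrt N]$, $c_u=N-u^2$ and $\chi_T(u)=d_k(u)$, the quantity $C_d$ becomes $\sum_{u\le\sqrt N,\;u^2\equiv N(d)}d_k(u)$; Selberg--Delange evaluates this only with a $\log$-power saving in the error, not $M^{1-\xi}$, so the hypothesis of Theorem~\ref{tMain} is not verified. The paper circumvents this by reversing the roles: it takes $\mathcal A$ to be the set of representations $y_1^2+y_2^2+y_3^2=N$, $c_a=|y_1|$ (resp.\ $|y_1y_2y_3|$), and $\chi_T$ the indicator of the sphere. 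The required power-saving equidistribution is then the statement that $\#\{\mathbf y: \sum(c_iy_i)^2=N\}$ has an asymptotic with error $O(N^{1/2-1/30})$, and this is Duke's bound on Fourier coefficients of half-integral weight cusp forms (Lemma~\ref{lemma:dukethm}). That analytic input is the engine of the whole proof and is absent from your sketch.

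Your lower-bound argument inherits the same problem: after opening $r_2$, the tail $d>N^{1/2-\delta}$ carries the oscillating sign $\chi_{-4}(d)$, so ``absorbing it via the companion upper bound'' replaces it by its absolute value and will typically dominate the main term rather than being negligible; genuine cancellation here is again equivalent to Duke-type input. Finally, the iteration you propose for bullets two and three does not work: after peeling off the first block $u=x_1\cdots x_k$, the residual modulus $N-u^2$ is in general neither square-free nor $\equiv 3\pmod 8$, so you cannot feed it back into the same statement. The paper does not iterate; for $\alpha=2,3$ it treats all coordinates at once via the transformation and sieving in \S\ref{s:trnvfmt}--\S\ref{ss:lebvelfdisnv78} (including the Brady ``level-lowering'' Lemma~\ref{lem:brjsd88y}), again resting on Duke's estimate.
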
 

The upper bound in the first bullet point in Theorem~\ref{thm:well tempered clavier vol.1} follows from earlier work of Henriot \cite[Theorem 3]{MR2911138}. 
All cases of Theorem~\ref{thm:well tempered clavier vol.1} are special cases of the more general Theorem~\ref{thm:3sqrsmultiplc}, which allows us to 
 put   general multiplicative weights on the integer solutions  $x_i$ of 
$$ x_1^2+ x_2^2+x_3^2=N.$$ Its proof is given in \S\ref{s:affinemanin} and is based on Theorem~\ref{tMain} and  deep estimates  of Duke~\cite{duke} for the Fourier coefficients of    cusp forms. It is worth mentioning that matching upper and lower bounds for the 
number of solutions of $$ \hspace{4.23cm} x_1^2+x_2^2+p^2=N, \ \ \ (x_1,x_2 \in \Z, p \ \textrm{prime}),$$  were given 
via the semi-linear sieve by Friedlander and Iwaniec~\cite[Theorem 14.5]{operacribro} on the assumption of the Generalized Riemann hypothesis 
and the Elliott--Halberstam conjecture.

 \begin{figure}[h]   \begin{subfigure}{0.45\textwidth}
    \centering    \includegraphics[width=1\linewidth]{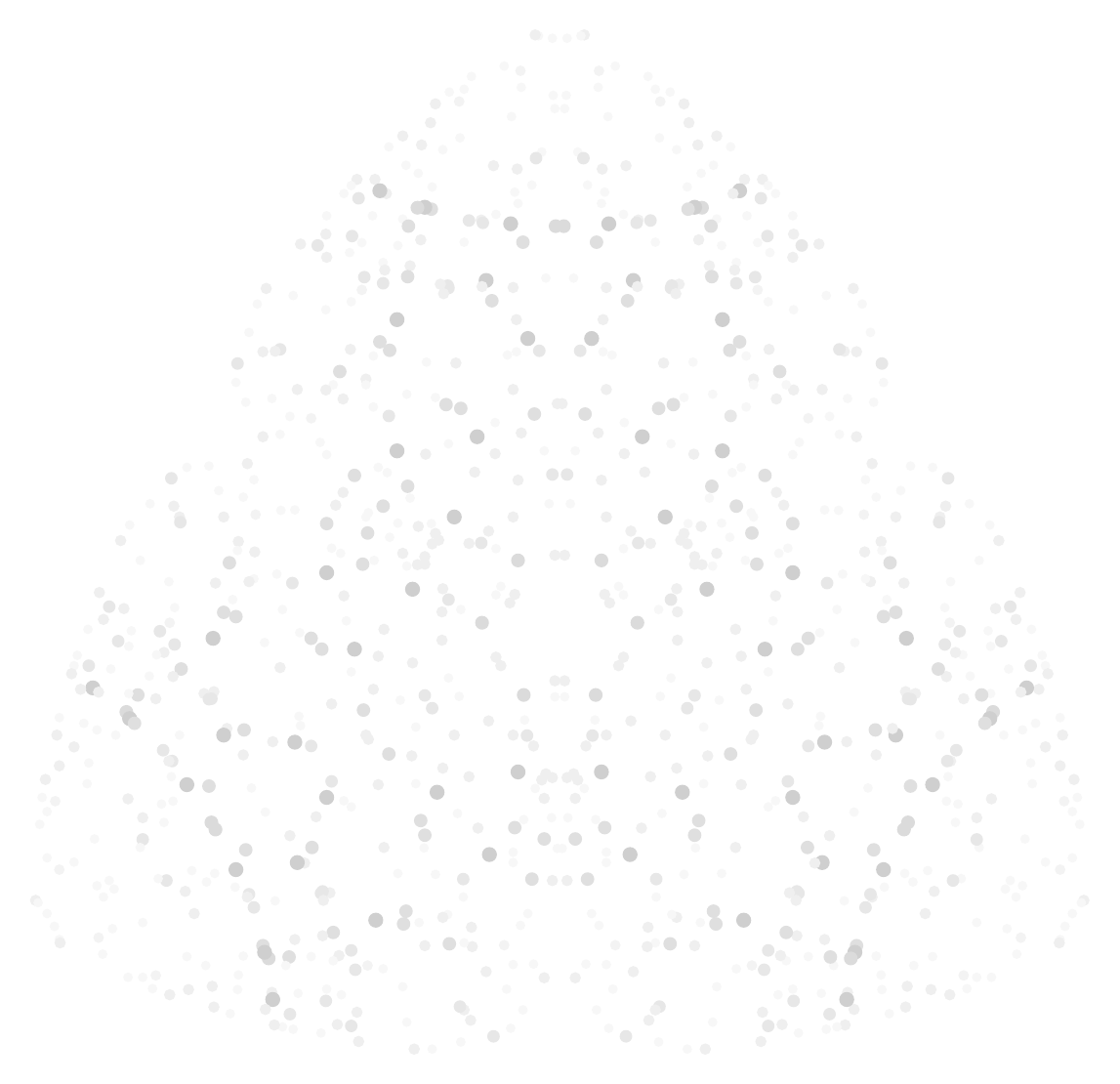}
    \label{fig:picture1}  \end{subfigure}    \begin{subfigure}{0.45\textwidth}
    \centering    \includegraphics[width=1\linewidth]{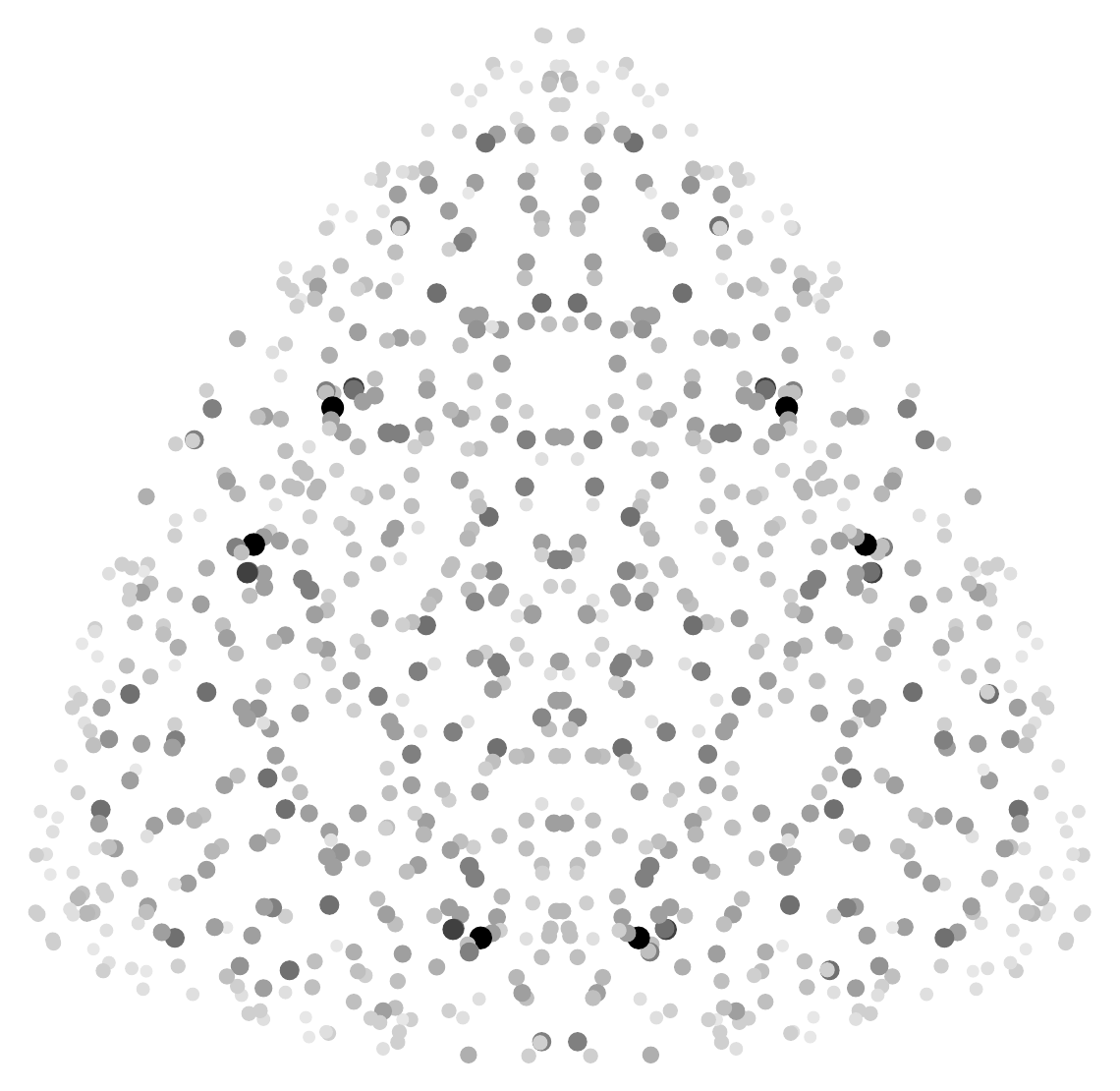}
    \label{fig:picture2}  \end{subfigure}  \vspace{-1em} \caption{Weighted  points on the sphere for $N=1716099$ and $N=1707035$}
    \label{fig11}
\end{figure}
 \begin{remark} [Bias]\label{rem:bias} 
The term $L(1,\chi_{-N})   N^{1/2} $ corresponds to the number of terms in the sum by a classical result of Gauss, 
whereas,     $(\log N)^{k-1}$  is   the average of  the $k$-th divisor function.
The shape of  $\mathfrak b(N)$ is  biased towards integers $N$ having many prime divisors $p\equiv 1\md 4 $ below $\log N$.
It is possible to combine this with  the work of Granville--Soundararajan~\cite[Theorem~5b]{MR2024414}, 
 to  find infinitely many $N$ such that $$ \#\{ \b x \in \N^6:   (x_1x_2)^2 + (x_3 x_4)^2 +(x_5 x_6)^2 =N  \} 
\gg   (\log \log N)^{5/2}   (\log N)^3 N^{1/2} .$$ This is in constrast with the typical size, which is  $(\log N)^3 N^{1/2}$
because   $L(1,\chi_{-N})$ and $ \mathfrak b(N)$ possess a limiting distribution
due to    the work of Chowla--Erd\H os~\cite{chowlaerdos} and Erd\H os--Wintner~\cite{erdwitner}.
  \end{remark}

The bias is illustrated in the   three-dimensional plots in Figure~\ref{fig11}.  
They depict points $\b x \in \N^3$ with $\sum_{i=1}^3 x_i^2=N$, where  each $\b x $   is colored   
based on the magnitude of  $\prod_{i=1}^3 \tau(x_i)$. 
The equations respectively have $960$ and $936$ solutions in        $\mathbb{N}^3$.
 Among the six primes that divide $1716099$, only one is  $1 \md 4$. However, in the factorization of $1707035$, 
 four primes are involved, and all except one are     $1 \md 4$.

 The ideas behind the proof of Theorem~\ref{thm:well tempered clavier vol.1}  are not specific to sums of three squares. We generalise the results to
equations without   specific shape, with the only provision that they have enough variables compared to the degree.
The end result is to study multiplicative functions over the coordinates of integer solutions of these Diophantine equations.
Problems of this type have been considered by Cook--Magyar~\cite{cookmag} and Yamagishi~\cite{yamagishi} in the case of the von Mangoldt function.

 \begin{theorem} \label{thm:spitfire}
Fix any $s>0$ and assume that $f:\N \to [0,\infty)$ is a multiplicative function satisfying 
$\tau(m)^{-s}\leq f(m) \leq \tau(m)^s$ for all $m$, where $\tau$ is the divisor function.
Assume that $F\in \Z[x_0,\ldots, x_n]$ is a smooth homogeneous polynomial of degree $d$ 
with   $n \geq 4+(d-1)2^d$ such that $F=0$ has a non-zero integer solution. Then ones has for all $B\geq 1 $
$$ \sum_{\substack{ \b x \in (\Z\setminus\{0\})^{n+1}\\  \max |x_i|\leq B, F(\b x)=0  }}
f(|x_0\cdots x_n |) \asymp B^{n+1-d}   \left\{  \exp \Big((n+1)  \sum_{p\leq B } \frac{(f(p)-1)}{p}\Big)
\right\},$$ where  the implied constants depend at most  on $F$ and $s$.\end{theorem}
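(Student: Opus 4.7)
The plan is to combine a quantitative Hardy--Littlewood--Birch-type count of integer zeros of $F$ in arithmetic progressions with the upper and lower bounds for sums of the form~\eqref{eq:sums of wolke type} proved in the companion paper~\cite{companion}. The hypothesis $n+1\geq 5+(d-1)2^d$ sits three above Birch's classical threshold $n+1>(d-1)2^d$ for smooth forms, and that margin is exactly what buys enough stability of the Birch asymptotic under the congruence conditions demanded by the Nair--Tenenbaum machinery.

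The first step is to strengthen Birch's theorem so that, uniformly in $q\leq B^{\eta}$ for some $\eta=\eta(F)>0$, one has
\[
N(B;\mathbf a,q):=\#\{\mathbf x \in \Z^{n+1} : F(\mathbf x)=0,\ \|\mathbf x\|_\infty \leq B,\ \mathbf x\equiv \mathbf a \pmod q\}
= \frac{c_\infty \cdot c_q(\mathbf a)}{q^n}\,B^{n+1-d} + O\!\bigl(B^{n+1-d-\delta}\bigr),
\]
with local densities $c_\infty>0$ and $c_q(\mathbf a)\geq 0$ produced by the singular integral and singular series; positivity of the global singular series is guaranteed by smoothness of $F$ together with the existence of a nontrivial integer zero. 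The extra variables beyond Birch's threshold provide several additional Weyl differencings, which is what makes this congruence-uniform count achievable; in my view, this is the genuine obstacle of the proof, since everything else is extraction from \cite{companion}.

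Granted this input, I would invoke~\cite{companion} directly. For the upper bound, take the countable set $\mathcal A$ to consist of $\mathbf x\in\Z^{n+1}$ with $F(\mathbf x)=0$ and $\|\mathbf x\|_\infty\leq B$, and consider the sequence $c_{\mathbf x}=|x_0\cdots x_n|$. Since $f$ is multiplicative and satisfies $\tau^{-s}\leq f\leq \tau^{s}$, the companion paper converts the arithmetic-progression count of Step~1 into
\[
\sum_{\mathbf x \in \mathcal A} f(|x_0\cdots x_n|) \ll B^{n+1-d}\prod_{p\leq B}\left(1+\frac{f(p)-1}{p}\right)^{n+1},
\]
which rearranges via Mertens into the claimed exponential; the exponent $n+1$ reflects that each coordinate contributes an independent local factor. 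For the matching lower bound, restrict to the positive-proportion subset of solutions with $|x_i|\asymp B$, pairwise coprime coordinates, and no prime factor below $B^{\rho}$ for a small $\rho>0$; on this subset $f(|x_0\cdots x_n|)=\prod_i f(|x_i|)$ exactly, so the problem decouples into $n+1$ one-variable sums that can be bounded below by the companion paper's lower-bound statement. The isolation of this subset is carried out by inclusion--exclusion fed by the uniform congruence count of Step~1, whereupon both bounds match and the theorem follows.
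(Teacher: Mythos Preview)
Your overall framework---a Birch-type level-of-distribution result for the zeros of $F$ in arithmetic progressions, fed into the upper and lower bounds of the companion paper---is exactly what the paper does, and your upper bound is correct in outline. The paper executes this via Lemmas~\ref{lem:birlevel}--\ref{lem:deathofalgebra}, establishing in particular that the sieve density satisfies $h_F(p)=(n+1)/p+O(p^{-2-\delta})$; this is the source of the factor $n+1$ in the exponent, and proving it (Lemma~\ref{lem:30thousandfeet}) is where the extra variables above Birch's threshold are spent, just as you anticipate.

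Your lower bound, however, has a genuine gap. The subset you isolate---solutions with pairwise coprime coordinates and no prime factor below $B^\rho$---is \emph{not} of positive proportion: since $h_F(p)\sim(n+1)/p$, sieving out primes below $B^\rho$ costs a factor $\prod_{p<B^\rho}(1-h_F(p))\asymp(\log B)^{-(n+1)}$. On that subset each coordinate has $O(1/\rho)$ prime factors, so $f(|x_0\cdots x_n|)\asymp_{\rho,s} 1$, and the resulting lower bound is only $\gg B^{n+1-d}(\log B)^{-(n+1)}$. This is far too weak whenever the target exponential exceeds $(\log B)^{-(n+1)}$; for example when $f=\tau$ the target is $(\log B)^{n+1}$. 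Separately, the phrase ``decouples into $n+1$ one-variable sums'' does not make sense here, since the constraint $F(\mathbf x)=0$ couples the coordinates regardless of their pairwise coprimality.

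The repair is simpler than your proposal: apply Theorem~\ref{thm:lower} directly, with the \emph{same} data $c_a=|x_0\cdots x_n|$ already used for the upper bound. The hypothesis $f(m)\geq\tau(m)^{-s}$ yields $f(m)\geq 2^{-s\Omega(m)}$, so $\inf\{f(m):\Omega(m)\leq L\}\geq 2^{-sL}>0$ for every $L$, which is precisely the positivity condition Theorem~\ref{thm:lower} demands. The level-of-distribution input from Lemma~\ref{lem:birlevel} comes with a power-saving error term, so its strengthened $o(\cdot)$ hypothesis is also met. No restriction to a subset and no decoupling is needed; the matching lower bound then follows from the lower-bound half of Lemma~\ref{monteverdi frammenti} applied to $\sum_{a\leq M}f(a)h_F(a)$.
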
The proof is based on Birch's circle method~\cite{MR150129}.
The term $B^{n+1-d}$ represents the    number of terms in the sum over $\b x $.
It will be clear from the proof that the assumption $ f(m) \geq \tau(m)^{-s}$ is only needed for the lower bound.

\subsection{Polynomial values}\label{s:nrtnbmintroduction}
Nair and Tenenbaum proved upper bounds for the average of arithmetic functions evaluated over values of polynomials in~\cite{MR1618321}.
Such sums are omnipresent in number theory: their work was  crucial in many different problems. Examples include
\begin{itemize}
\item Equidistribution of {CM} points  (\cite{MR3898709}),
\item Manin's conjecture for counting rational points on surfaces (\cite{MR3953118},~\cite{MR3103132}), 
\item Mass equidistribution (\cite{MR2680498}),
\item Unit fractions  (\cite{MR3101397}). 
\end{itemize}
Such sums are of type~\eqref{eq:sums of wolke type} as can be seen
by taking $\chi$ to be the indicator function of an interval and $c_a$ to be the value of the polynomial at   an integer $a$. 
However,~\eqref{eq:sums of wolke type} also covers any polynomial in any number of variables.
In~\S\ref{s:nrtnbm} we shall   prove
Theorems~\ref{thm:nrtnbm} and~\ref{thm:lowernair}. These results respectively give matching upper and lower bounds for \begin{equation}\label{eq:sumsrrr}
\sum_{\substack{\mathbf x \in \Z^n\cap \mathcal R \\ Q(\mathbf x) \neq 0 }} f(|Q(\mathbf x)|)
,\end{equation} where  $\mathcal R$ is a bounded subset of $\mathbb R^n$ and $Q(\b x )$ is an arbitrary polynomial in any number of variables.
This is straightforward for polynomials without too many singularities but when $Q$ is very singular there is high probability that a small power of a prime divides 
its values. The new ingredient needed is an estimate by Pierce--Schindler--Wood~\cite[Lemma~4.10]{MR3551849} giving elementary proofs to 
statements regarding the Igusa zeta function.

We state one of the corollaries first. 
For $k,m\in \mathbb N$ denote the number of representations of $m$ as the product of $k$ natural numbers by $\tau_k(m)$. 
\begin{theorem}
\label{cor:nrtnbmadivisor} Let $ k \geq  2,  n\geq 1 $ be arbitrary integers, let $\ell$ be any positive real number
 and let $Q$ be an integer irreducible polynomial in $n$ variables.
Then for all $X\geq 2 $ we have
$$
X^n (\log X)^{k^\ell -1 }\ll
\sum_{\substack{ \mathbf x \in  (\mathbb Z \cap [-X,X])^n  \\ Q(\b x) \neq 0   }} \tau_k(|Q(\b x) |)^\ell
\ll  X^n (\log X)^{k^\ell -1 }
.$$  \end{theorem}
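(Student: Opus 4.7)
The plan is to deduce this corollary directly from Theorems~\ref{thm:nrtnbm} and~\ref{thm:lowernair} by choosing $f(m)=\tau_k(m)^\ell$ and $\mathcal{R}=[-X,X]^n$. First I would verify that $f$ meets the hypotheses needed for both theorems. Multiplicativity of $f$ follows from that of $\tau_k$. For the size constraint, $\tau_k(p^a)=\binom{a+k-1}{k-1}\leq C_k (a+1)^{k-1} = C_k \tau(p^a)^{k-1}$, and so
\[
1 \leq f(p^a) \leq C_k^\ell \, \tau(p^a)^{\ell(k-1)},
\]
placing $f$ inside the band $\tau^{-s}\leq f\leq \tau^{s}$ for an $s=s(k,\ell)$, which is the typical kind of regularity hypothesis the Nair--Tenenbaum type theorems require.

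Next, I would evaluate the main term supplied by Theorems~\ref{thm:nrtnbm} and~\ref{thm:lowernair}. These theorems output, up to constants depending on $Q$ and $f$, the product of $\operatorname{vol}(\mathcal{R})$ with a Selberg--Delange factor essentially of the form $\exp\bigl(\sum_{p\leq X^{\deg Q}}(f(p)-1)/p\bigr)$. Because $f(p)=\tau_k(p)^\ell=k^\ell$, Mertens' theorem gives
\[
\sum_{p\leq X^{\deg Q}} \frac{f(p)-1}{p} = (k^\ell - 1)\log\log X + O_{k,\ell,Q}(1),
\]
so the factor is $\asymp (\log X)^{k^\ell - 1}$. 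Combined with $\operatorname{vol}([-X,X]^n)\asymp X^n$, this yields the claimed order $X^n (\log X)^{k^\ell-1}$ for both bounds.

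The step that requires the most care is checking the non-degeneracy hypothesis needed by the \emph{lower bound} Theorem~\ref{thm:lowernair}. One must ensure that $|Q(\mathbf{x})|$ is not forced to be abnormally divisible by small primes, i.e.\ that the local densities of $Q\equiv 0 \pmod{p^j}$ behave as for a generic polynomial; this is precisely what irreducibility of $Q$ over $\mathbb{Z}$ provides, since it prevents fixed algebraic factors and bounds the $p$-adic contribution via the estimate of Pierce--Schindler--Wood mentioned in \S\ref{s:nrtnbmintroduction}. If Theorem~\ref{thm:lowernair} already folds irreducibility into its hypotheses, the lower bound is immediate; otherwise I would spend a few lines translating irreducibility into the concrete condition required (uniform boundedness of $\#\{\mathbf{x} \bmod p^j : Q(\mathbf{x})\equiv 0\}/p^{j(n-1)}$ against the Igusa-type estimate). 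The upper bound is the more routine direction and follows from applying Theorem~\ref{thm:nrtnbm} with the verified size condition on $f$.
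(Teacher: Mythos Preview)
Your approach is the paper's: apply Theorems~\ref{thm:nrtnbm} and~\ref{thm:lowernair} with $f=\tau_k^\ell$ and $\mathcal D=[-X,X]^n$. But you have misread what those theorems actually output. They do \emph{not} hand you a Selberg--Delange factor $\exp\bigl(\sum_p (f(p)-1)/p\bigr)$; their conclusion is
\[
\sum_{\substack{\mathbf x\in\mathbb Z^n\cap\mathcal D\\ Q(\mathbf x)\neq 0}} f(|Q(\mathbf x)|)\ \asymp\ X^n(\log X)^{-r}\sum_{a\leq X^n} f(a)\,\varrho_Q(a),
\]
with $\varrho_Q(a)=a^{-n}\#\{\mathbf y\bmod a:Q(\mathbf y)\equiv 0\}$ and $r$ the number of irreducible $\mathbb Q$-factors of $Q$ (here $r=1$ by irreducibility). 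The step you have glossed over is the evaluation $\sum_{a\leq X}\tau_k(a)^\ell\varrho_Q(a)\asymp(\log X)^{k^\ell}$. The paper does this via Lemma~\ref{lem:buchanan bus station}, a Wirsing-type statement needing $G(p)=k^\ell$ constant on primes and $G(p^e)\leq C^e$ for some $C<2$; the latter comes from $\tau_k(p^e)^\ell=\binom{e+k-1}{k-1}^\ell\ll_{k,\ell}(3/2)^e$. Your Mertens line is the correct heuristic shadow of this, but the honest argument must pass through $\varrho_Q$.

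Your third paragraph is misplaced worry. Theorem~\ref{thm:lowernair} imposes no ``non-degeneracy'' condition on $Q$ beyond that already in Theorem~\ref{thm:nrtnbm} (non-constant, no repeated factors over $\mathbb Q$, both implied by irreducibility); the Pierce--Schindler--Wood estimate is consumed inside the \emph{proofs} of those theorems, not left as a hypothesis for you to re-verify. The genuine extra checks for the lower bound are only that $[-X,X]^n$ contains a ball of radius $X$ (it does, centred at the origin) and that $\inf\{f(m):\Omega(m)\leq T\}>0$ for each $T$, which is trivial since $\tau_k^\ell\geq 1$. Finally, the hypothesis class in Theorems~\ref{thm:nrtnbm}--\ref{thm:lowernair} is $\mathcal M(A,\epsilon,C)$ of Definition~\ref{dMultRegion}, not the two-sided band $\tau^{-s}\leq f\leq\tau^s$ (that is the hypothesis of Theorem~\ref{thm:spitfire}); what you must check is $f(mn)\leq f(m)\min\{A^{\Omega(n)},Cn^\epsilon\}$ for coprime $m,n$, which follows from multiplicativity together with $\tau_k(n)^\ell\leq A^{\Omega(n)}$ and the divisor bound.
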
 
\begin{remark}[Previously known cases]
Erd\H os~\cite{MR0044565} dealt with the case $\ell=n=1, k=2$ and 
Linnik~\cite{MR168543} with $n=1$,   all  $\ell,k\geq 1$ and  $\deg(Q)=1$. Later, 
Delmer~\cite{MR289433} worked in the cases 
when   $\ell\geq 1 , k=2 ,Q$ is irreducible and  $n=1$,
Nair--Tenenbaum~\cite{MR1618321} for all $\ell, k , Q$ and when $n=1$,
and de la Bret\`eche--Browning~\cite{MR2276196} whenever   $n = 2, \ell\geq 1 , k\geq 1 $ and   $Q$ is homogeneous.
Asymptotics for the divisor function over the values of polynomials in more
than one variable have been achieved by various authors, see, for example the work of  
de la Bret\`eche--Browning~\cite{MR2719554},
Zhou--Ding~\cite{MR4496536} and the list of references therein.
It is worth mentioning that in the case  $k=2,\ell=1$ and $Q$ a single irreducible polynomial in one variable,
only the cases corresponding to linear and quadratic polynomials are known to satisfy an asymptotic, see the work of 
Hooley~\cite{MR153648} and Bykovski\u{\i}~\cite{MR741852}.
\end{remark}
Theorem~\ref{cor:nrtnbmadivisor} 
follows directly from our next two results. Denote 
 $$\varrho_Q(q):= \frac{\#\{\b y \in (\Z/q\Z)^n: Q(\b y )\equiv 0 \md q\}}{q^n } 
$$ for any $Q\in \mathbb Z[x_1,\ldots, x_n]$ and $q\in \N$.

\begin{definition}\label{def:se pente lepta}
Let $\mathcal D\subset \R^n$ be   bounded. We let  $$X(\mathcal D):=\sup\left \{\max_{1\leq i \leq n }|x_i| :\b x \in \mathcal D\right \}.$$ 
\end{definition}

\begin{definition}[A class of functions]
\label{dMultRegion} Fix $ A\geq 1,\epsilon > 0, C>0$. The set $\mathcal M(A,\epsilon , C)$ of   functions
$f: \mathbb{N} \rightarrow [0,\infty)$ is defined by the property that for all coprime $m , n$ one has $$f(mn) \leq  f(m) \min\{A^{\Omega(n)}, C n^\epsilon \}.$$ 
\end{definition}
\begin{theorem} 
\label{thm:nrtnbm}
Fix $A\geq 1 $ and let  $\mathcal D$ be a bounded set. 
Let $Q$ be an arbitrary non-constant integer polynomial in $n$ variables without repeated polynomial factors over $\mathbb Q$
and let    $f$ be a function such that for every $\epsilon>0$ there exists $C>0$ for which $f \in \mathcal M (A,\epsilon,C)$. Then
$$\sum_{\substack{ \mathbf x \in  \mathbb Z^n \cap  \mathcal D \\ Q(\b x) \neq 0   }} f(|Q(\b x) |) \ll 
X(\mathcal D)^n
(\log 2X(\mathcal D))^{-r}
\sum_{\substack{ a\leq X(\mathcal D) ^n}} f(a)   
\varrho_Q(a)
,$$ where $r$ denotes the number of distinct irreducible polynomial factors of $Q$ over $\Q$
and   the implied constant depends at most on $A, f, n$ and $Q$.
\end{theorem}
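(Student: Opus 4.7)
Set $X := X(\mathcal{D})$ and $N := X^{\deg Q}$. Because $\mathcal{D}$ is bounded, every $\b x$ in the sum satisfies $\max_i |x_i| \leq X$, and hence $|Q(\b x)| \ll_Q N$. I enlarge the sum by replacing the indicator of $\mathcal{D}$ by that of the cube $[-X,X]^n$, which costs only a constant factor on the right-hand side because the target bound is already $\gg X^n \cdot (\cdots)$. This puts the left-hand side into the shape of \eqref{eq:sums of wolke type}: the countable set is $\mathcal{A}=\mathbb{Z}^n \cap [-X,X]^n$, the finite-support weight $\chi$ is the indicator of $\mathcal{A}\cap\{Q\neq 0\}$, and the integer sequence is $c_{\b x} := |Q(\b x)|$. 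The hypothesis $f \in \mathcal{M}(A,\epsilon,C)$ for every $\epsilon > 0$ is exactly the regularity required by the companion paper~\cite{companion} for bounds on such sums.

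Next I verify the equidistribution hypothesis that \cite{companion} places on the sequence $(c_{\b x})$. For each modulus $q$ and each residue class $a$, splitting by $\b x \bmod q$ and counting lattice points in translated boxes yields
\[
\#\{\b x \in [-X,X]^n\cap\mathbb{Z}^n : Q(\b x)\equiv a\bmod q\} = (2X)^n q^{-n}\#\{\b y \bmod q : Q(\b y)\equiv a\} + O(qX^{n-1}),
\]
which matches the expected shape provided $q$ is not too large; in the case $a=0$ the density is exactly $\varrho_Q(q)$. Feeding this into the upper bound of \cite{companion} for \eqref{eq:sums of wolke type} yields
\[
\sum_{\b x} f(|Q(\b x)|) \ll X^n (\log N)^{-r}\sum_{a\leq N} f(a)\varrho_Q(a),
\]
with the saving $(\log N)^{-r}$ coming from the pole of order $r$ of $\sum_a \varrho_Q(a)a^{-s}$ at $s=1$, which by Lang--Weil equals the number of distinct $\mathbb{Q}$-irreducible factors of $Q$. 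Since $\log N \asymp \log X$, the logarithmic factor takes the stated form, and the tail $a \in (X^n, N]$ is absorbed using the convergence of the Euler factors of $\sum_a \varrho_Q(a)a^{-s}$ together with the sub-multiplicative bound $f(a) \ll_\epsilon a^\epsilon$.

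The main obstacle is uniform control over the equidistribution error when $q$ is large and $Q$ is singular. The naive error $qX^{n-1}$ above overwhelms the main term once $q$ exceeds $X$, whereas the companion paper's theorem requires good density estimates for moduli up to essentially $X^n$. This is exactly the point at which the assumption that $Q$ has no repeated polynomial factor, combined with the elementary Pierce--Schindler--Wood bounds \cite[Lemma~4.10]{MR3551849} on the Igusa-type local densities, becomes essential: those estimates give $\varrho_Q(p^k)\ll_Q p^{-k(1-\epsilon)}$ uniformly in $p$ and $k$, so $\sum_a \varrho_Q(a)a^{-s}$ converges absolutely for $s$ slightly below $1$ and the large-modulus tail in the framework of \cite{companion} is under control. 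Simultaneously, the $C n^\epsilon$ half of Definition~\ref{dMultRegion} absorbs the high prime powers that may divide $|Q(\b x)|$ when $Q$ is singular, while the $A^{\Omega(n)}$ half handles small square-full parts; together they guarantee that the contribution of prime powers is swallowed into $\sum_a f(a)\varrho_Q(a)$ rather than leaking into the logarithmic factor. Putting the multivariable equidistribution together with these singularity estimates, and then invoking the companion paper as a black box, is what I expect to be the most delicate step of the argument.
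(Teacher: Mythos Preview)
Your overall strategy matches the paper's exactly: enlarge $\mathcal D$ to the cube $[-X,X]^n$, verify the equidistribution hypothesis of Definition~\ref{def:levdistr} via an elementary lattice-point count, and apply Theorem~\ref{tMain} with $h=\varrho_Q$ and $M(T)\asymp T^n$; the factor $(\log X)^{-r}$ then comes from~\eqref{eq:wang3}.

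However, you misidentify where the genuine difficulty lies, and this matters if you want to actually carry the argument through. You write that ``the companion paper's theorem requires good density estimates for moduli up to essentially $X^n$'' and that the naive error $qX^{n-1}$ is therefore the main obstacle. This is not so: Definition~\ref{def:levdistr} only asks for~\eqref{eEquiProgression} in the range $d\leq M(T)^\theta$ for \emph{some} fixed $\theta\in(0,1)$. The paper simply takes $\theta=\frac{1}{4n}$, so only $q\leq T^{1/2}$ is needed, and there the naive error $O(qX^{n-1})$ is already a power saving. No cleverness is required at this step.

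The Pierce--Schindler--Wood input enters for a different reason. Theorem~\ref{tMain} requires $h_T=\varrho_Q$ to lie in the class $\mathcal D(\kappa,\lambda_1,\lambda_2,B,K)$ of Definition~\ref{densfnct}, and in particular to satisfy the prime-power decay~\eqref{eHighPower}: $\varrho_Q(p^e)\leq p^{-e\lambda_1+\lambda_2}$ for \emph{all} $e\geq 1$. This is precisely what Lemma~\ref{lem:wud} (i.e.\ \cite[Lemma~4.10]{MR3551849}) provides, with $\lambda_1=1/\deg Q$; your stated bound $\varrho_Q(p^k)\ll p^{-k(1-\epsilon)}$ is too optimistic for singular $Q$ and is not what one gets. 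The conditions~\eqref{elowaverg} and~\eqref{eLowPower} come separately from Lemma~\ref{lem:chebotrv} and the trivial bound $\varrho_Q(p^e)\leq\varrho_Q(p)\ll 1/p$. Finally, there is no tail $a\in(X^n,N]$ to worry about: Theorem~\ref{tMain} outputs $\sum_{a\leq M}f(a)\varrho_Q(a)$ with $M\asymp X^n$ directly, regardless of how large $|Q(\mathbf x)|$ can be (that size enters only through the harmless hypothesis~\eqref{Roll Over Beethoven}).
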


For the corresponding lower bound to hold it is necessary that 
$\mathcal D$ is not too small; this explains the condition on $\mathcal D$ in our next
result:

\begin{theorem}
\label{thm:lowernair}Keep the notation and assumptions of Theorem~\ref{thm:nrtnbm}.
Assume, in addition, that $\mathcal D$ contains an open sphere of radius at least $X\geq 1$ and that   $f:\N\to [0,\infty) $ is a multiplicative function 
such that $$
\textrm{ for \ each }\   T \geq 1 \ \textrm{ one \ has } \ \inf\{f(m): \Omega(m) \leq  T \} > 0 
.$$ Then there exists a positive constant $\theta_Q$ that depends on $Q$ such that 
  $$\sum_{\substack{ \mathbf x \in  \mathbb Z^n \cap  \mathcal D \\ Q(\b x) \neq 0   }} f(|Q(\b x) |)
\gg X^n  (\log 2X )^{-r}
\sum_{\substack{ a\leq X}} f(a)    \varrho_Q(a)  
,$$ 
where the implied constants depend at most on $A, f, n$ and $Q$.
\end{theorem}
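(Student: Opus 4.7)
Plan: The approach is localisation plus a lower-bound combinatorial sieve. Since $\mathcal D$ contains an open ball of radius $X$, we may fix an axis-aligned box $I \subset \mathcal D$ of side length $\gg X$ and restrict the sum to $\b x \in I \cap \Z^n$; on $I$ we have $|Q(\b x)| \leq C_Q X^d$ with $d = \deg Q$. Fix $\delta > 0$ small (depending only on $Q$, $n$, $r$), set $Y := X^\delta$, and observe that any positive integer $b \leq C_Q X^d$ all of whose prime factors exceed $Y$ has $\Omega(b) \leq d/\delta$; thus by hypothesis $c_0 := \inf\{f(m): \Omega(m) \leq d/\delta\}$ is positive. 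Let $\mathcal B$ denote the finite set of primes of bad reduction for $Q$, handled throughout as a bounded multiplicative correction.

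For each squarefree $a \leq Y$ supported on primes in $[1,Y] \setminus \mathcal B$, let $N_a$ be the number of $\b x \in I \cap \Z^n$ with $v_p(Q(\b x)) = \mathbf{1}_{p \mid a}$ for every prime $p \leq Y$, $p \notin \mathcal B$. For such $\b x$, writing $Q(\b x) = a b$ we have $\gcd(a,b)=1$ and $\Omega(b) \leq d/\delta$, so multiplicativity of $f$ gives $f(|Q(\b x)|) \geq c_0 f(a)$. Therefore
\[
\sum_{\substack{\b x \in \Z^n \cap \mathcal D \\ Q(\b x)\neq 0}} f(|Q(\b x)|) \;\geq\; c_0 \!\sum_{\substack{a \leq Y,\; \mu^2(a)=1 \\ (a,\mathcal B)=1}} f(a)\, N_a.
\]

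The count $N_a$ is estimated from below by applying the Rosser--Iwaniec beta sieve of dimension $r$ to the sequence $\{\b x \in I : a \mid Q(\b x),\; p^2 \nmid Q(\b x) \text{ for all } p \mid a\}$, sifted by the primes $p \leq Y$ with $p \nmid a$ and $p \notin \mathcal B$. The input is the elementary level-of-distribution estimate
\[
\#\{\b x \in I : m \mid Q(\b x)\} \;=\; \varrho_Q(m)\, X^n + O\!\bigl(\varrho_Q(m)\, m\, X^{n-1}\bigr),
\]
valid uniformly for $m \leq X^{1-\eta}$, together with $\varrho_Q(p) = r/p + O(1/p^2)$ on average (identifying the sifting dimension as $r$). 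Choosing $\delta$ small enough that $s := \log X^{1-\eta}/\log Y$ exceeds the beta-sieve threshold in dimension $r$, we obtain
\[
N_a \;\gg\; \varrho_Q(a)\, \frac{\phi(a)}{a}\, X^n\!\!\prod_{\substack{p \leq Y\\ p \nmid a,\, p \notin \mathcal B}}(1-\varrho_Q(p)),
\]
the auxiliary factor $\phi(a)/a$ arising from the density $\varrho_Q(p)-\varrho_Q(p^2)\asymp \varrho_Q(p)(1-1/p)$ of $v_p(Q)=1$ at good primes $p \mid a$ (using Hensel's lemma $\varrho_Q(p^2) = \varrho_Q(p)/p$).

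Combining with Mertens, $\prod_{p \leq Y}(1-\varrho_Q(p))\asymp (\log X)^{-r}$, the lower bound becomes $X^n(\log 2X)^{-r}$ times the partial sum $\sum_a f(a)\varrho_Q(a)\phi(a)/a$ over squarefree $a \leq Y$ coprime to $\mathcal B$. A Shiu-type comparison for the multiplicative function $f\varrho_Q$, valid because $f \in \mathcal M(A,\epsilon,C)$ from the hypotheses of Theorem~\ref{thm:nrtnbm}, shows this quantity is comparable up to a positive constant with $\sum_{a \leq X} f(a)\varrho_Q(a)$: the factor $\phi(a)/a$ contributes only a convergent Euler-product correction, the squarefree restriction loses at most a constant, and extending the cutoff from $Y$ to $X$ costs only a constant under the controlled growth of $f$. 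The main obstacle is the uniform execution of the lower-bound sieve with the twisted local conditions $v_p(Q)=1$ on primes $p \mid a$ and the correct identification of the sifting dimension with $r$; treatment of the bad primes and the Shiu-type comparison are routine given the framework.
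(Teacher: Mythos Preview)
Your approach is essentially correct and would work, but it differs from the paper's proof in an important structural way. The paper does not carry out a sieve argument here at all: it simply restricts to the ball $\mathbf{c}+X\mathcal U\subset\mathcal D$, invokes Lemma~\ref{lem:polynomresiduesclasses} to verify the equidistribution hypothesis of Definition~\ref{def:levdistr} for the sequence $c_{\mathbf x}=|Q(\mathbf x)|$, and then applies the black-box lower bound Theorem~\ref{thm:lower} from the companion paper. All of the sieve work (decomposing according to the $Y$-smooth part, invoking a lower-bound sieve, and the comparison of the truncated sum to the full sum) is hidden inside that companion result, which is stated for abstract equidistributed sequences.

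What you have written is, in effect, a self-contained reproof of (the relevant case of) Theorem~\ref{thm:lower}: you fix the small-prime profile via a squarefree $a\leq Y=X^{\delta}$, use the hypothesis $\inf\{f(m):\Omega(m)\leq T\}>0$ to control the cofactor, apply a Rosser--Iwaniec lower bound to produce $N_a\gg \varrho_Q(a)X^n\prod_{p\leq Y}(1-\varrho_Q(p))$, and then compare $\sum_{a\leq Y}f(a)\varrho_Q(a)$ with $\sum_{a\leq X}f(a)\varrho_Q(a)$. Each of these steps is sound, though a few points you label as routine do need care: the bad primes $p\in\mathcal B$ can divide $Q(\mathbf x)$ to unbounded power, so one must impose an additional congruence condition there (or work with the primitive part of $Q$) rather than merely call it a ``bounded multiplicative correction''; the identity $\varrho_Q(p^2)=\varrho_Q(p)/p$ is not Hensel in general, and one should instead use Lemma~\ref{formulasfataltotheflesh} to get $\varrho_Q(p)-\varrho_Q(p^2)=\varrho_Q(p)(1+O(1/p))$ for large $p$; and the final ``Shiu-type comparison'' is exactly the content of Lemma~\ref{monteverdi frammenti}, whose proof (via Rankin's trick) you are implicitly rederiving. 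The trade-off is clear: your argument is self-contained and exposes the mechanism, while the paper's is a two-line reduction to an abstract theorem proved elsewhere.
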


\begin{notation}
For a non-zero integer $m$ define $$\Omega(m):=\sum_{p\mid m } v_p(m),$$ where   $v_p$ is the standard $p$-adic valuation. Define $P^+(m)$ and $P^-(m)$ respectively to be the largest and the smallest  prime factor of a positive integer $m$ and let  $P^+(1)=1$ and $ P^-(1)=+\infty$. For a real number $x$
we reserve the notation $[x]$ for the largest integer not exceeding $x$. Throughout the paper we use 
  the standard convention that empty products are set equal to $1$. 
Throughout the paper we shall also make use of the convention that when   iterated logarithm functions $\log t,\log \log t$, etc., are used, the real variable $t$  is assumed to be sufficiently  large   to make the iterated logarithm well-defined.
\end{notation}

\begin{ack}The work         started during the research stay of SC, PK and CP during the 
workshop \textit{Probl\`emes de densit\'e en Arithm\'etique} at CIRM Luminy in $2023$. 
We would like to thank  the organisers Samuele Anni, Peter Stevenhagen and Jan Vonk.
We thank Levent Alp\"oge, Robert J.\ Lemke Oliver and  Frank Thorne for useful discussions on \S\ref{s:sou sfirizw}.
PK gratefully acknowledges the support of Dr.\ Max R\"ossler, the Walter Haefner Foundation and the ETH Z\"urich Foundation.
Part of the work of SC was supported by the National Science Foundation under Grant No.~\texttt{DMS-1928930}, while the 
author was in residence at the MSRI in Spring 2023.\end{ack}

\subsection*{Structure of the paper}
In \S\ref{s:thank msri for not letting me go to the workshop that started the project}  we recall the necessary results from~\cite{companion}.
Sections~\ref{s:sou sfirizw}-\ref{ss:954airport} respectively contain the proofs of
Theorems~\ref{t6Torsion} and~\ref{thm:mixedmoments} on the $6$-rank. In \S\ref{ss:lateformyflight} we   prove Theorem~\ref{tDecay} that provides 
tail bounds for the probability of large values of   additive functions in the general setting of Theorem~\ref{tMain}.  This is then  applied 
in Sections~\ref{ss:passengergordon}-\ref{ss:zone5please} and~\ref{ss:allremainingpassengers} to prove 
Theorems~\ref{thm:i dont believe how good this is!!!!!!!!!}-\ref{thm:tailstailstails} and~\ref{thm:s3s3tailtail} respectively
on $h_3$, $S_5$ and $S_3$ extensions. Sections~\ref{s:affinemanin}-\ref{ss:finalproofthrmes123} contain the proof of Theorem~\ref{thm:3sqrsmultiplc} on 
sums of three squares; this is more general than Theorem~\ref{thm:well tempered clavier vol.1}. The proof of 
Theorem~\ref{thm:spitfire} on general Diophantine equations is located in \S\ref{s:aplntratinpts}.
Lastly, in~\S\ref{ss:gnrluper} and \S\ref{ss:gnrlowerrs} we prove respectively 
Theorems~\ref{thm:nrtnbm} and~\ref{thm:lowernair} on averages of arithmetic functions over values of arbitrary polynomials.
They are then applied in \S\ref{s:prflastcorol} to prove Theorem~\ref{cor:nrtnbmadivisor}.

\section{Prerequisite lemmas}\label{s:thank msri for not letting me go to the workshop that started the project}
In this section we recall the required bounds proved in~\cite{companion}.
  \begin{definition}
[Density functions]
\label{densfnct} 
Fix $\kappa, \lambda_1, \lambda_2,  B, K  > 0$. We define the set $\mathcal D(\kappa,  \lambda_1, \lambda_2, B, K)$ of multiplicative functions
$h: \mathbb{N} \rightarrow \mathbb{R}_{\geq 0}$ by the properties  
\begin{itemize}
 \item for all $B <w < z$ we have 
\begin{align}
\label{elowaverg}
\prod_{\substack{p \  \mathrm{ prime} \\ w \leq p < z}} (1 - h(p))^{-1} \leq \left(\frac{\log z}{\log w}\right)^\kappa \left(1 + \frac{K}{\log w}\right)
,\end{align}
\item for every prime $p>B$ and integers $e\geq 1 $ we have 
\begin{align}
\label{eLowPower}
h(p^e) \leq \frac{B}{p},
 \end{align} 
\item for every prime $p$ and $e\geq 1 $ we have 
 \begin{align}
\label{eHighPower}
h(p^e) \leq p^{-e\lambda_1 + \lambda_2}.
\end{align}
\end{itemize}
\end{definition} 
Let $\mathcal A$ be an infinite set and for each $T\geq 1$ let $\chi_T:\mathcal A\to [0,\infty)$ be any function for which 
\begin{equation} \label{eq:baz1}\{a\in \mathcal A: \chi_T(a)>0 \} \ \textrm{ is finite for every } T\geq 1 .\end{equation} 
We also assume that \begin{equation} \label{eq:baz2}\lim_{T\to +\infty} \sum_{a\in \mathcal A} \chi_T(a)=+\infty .\end{equation} 
Assume that we are given a sequence of strictly positive integers 
$(c_a)_{a\in \mathcal A}$ indexed by  $\mathcal A$ and  denoted by 
$$\mathfrak C:= \{ c_a: a \in \mathcal A\}.$$
We will be interested in estimating sums of the form $$\sum_{a\in \mathcal A} \chi_T(a) f ( c_a ),$$ where $f$ is an arithmetic function.

We will need the following  notion of  `equi-distribution' of the values of the integer sequence $c_a$ in arithmetic progressions. 
For a non-zero integer $d$ and any $T\geq 1$, let 
$$
C_d(T)=\sum_{\substack{ a\in \mathcal A \\ c_a \equiv 0 \md d }} \chi_T(a).
$$ 

\begin{definition}[Equidistributed sequences]
\label{def:levdistr} 
We say that $\mathfrak C$ is equidistributed if there exist
positive real numbers $ \theta, \xi, \kappa, \lambda_1, \lambda_2,  B, K  $ with $\max\{\theta, \xi\}<1$, 
a function $M: \mathbb{R}_{\geq 1} \rightarrow \mathbb{R}_{\geq 1}$
and  a function $h_T\in \c D ( \kappa, \lambda_1, \lambda_2,  B, K)$   
such that
\begin{align}
\label{eEquiProgression}
C_d(T) =
h_T(d)
 M(T) \Bigg\{ 1+O\Bigg(\prod_{\substack{ B<p\leq M(T) \\  p\nmid d  }} (1-
h_T(p)
 )^2\Bigg)\Bigg\} + O(M(T)^{1-\xi}) 
\end{align}
for every $T \geq 1$ and every $d\leq M(T)^\theta$, where the implied constants are independent of $d$ and $T$.
\end{definition}

It is worth emphasizing that in this definition the constants 
$ \theta, \xi, \kappa, \lambda_1, \lambda_2,  B, K  $ are all assumed to be  independent of $T$. For example, the bound $h_T(p^e)=O(1/p)$ 
in~\eqref{eLowPower} holds with an implied constant that is  independent of $e,p$ as well as  $T$.
From now on we shall write  $M$ for $M(T)$.
We are now ready to state the main   result in~\cite{companion}.
\begin{theorem}\label{tMain}
Let $\mathcal A$ be an infinite set and for each $T\geq 1$ define $\chi_T:\mathcal A\to [0,\infty)$ to be any function such that  both~\eqref{eq:baz1} and~\eqref{eq:baz2} hold.
Take a sequence of strictly positive integers $\mathfrak C=(c_a)_{a \in \mathcal{A}}$. Assume that $\mathfrak{C}$ is equidistributed with respect to some positive constants $\theta,\xi,\kappa,\lambda_1,\lambda_2,B,K$ and functions 
 $M(T)$  and $h_T\in \mathcal D(\kappa,  \lambda_1, \lambda_2, B, K)$   
as in Definition~\ref{def:levdistr}.
Fix any $A>1$ and assume that  $f$ is a function 
such that for every $\epsilon>0$ there exists $C>0$ for which 
$f \in \mathcal M (A,\epsilon,C)$, which is introduced in Definition~\ref{dMultRegion}.
 Assume that there exists $\alpha > 0$ and $\widetilde{B}>0$ such that for all $T\geq 1 $ one has 
\begin{equation}\label{Roll Over Beethoven}
\sup\{c_a :a\in \mathcal A, \chi_T(a)>0 \} \leq \widetilde{B} M^\alpha,
\end{equation} where $M=M(T)$ is as in Definition~\ref{def:levdistr}.
Then for all $T\geq 1 $  we have
$$
\sum_{a\in \mathcal A } \chi_T(a) f(c_a) 
  \ll  M\prod_{\substack{ B<p\leq M    } } (1-
h_T(p)
) 
\sum_{\substack{ a\leq M  }} f(a)     h_T(a),
$$
where the implied constant is allowed to depend
on $  \alpha, A, B, \widetilde{B}, \theta  , \xi, K,  \kappa$, $\lambda_i$, the function $f$ and the implied constants in~\eqref{eEquiProgression}, 
but is independent of $T$ and $M$.
\end{theorem}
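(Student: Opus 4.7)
The plan is to adapt the Nair--Tenenbaum hyperbolic decomposition to the equidistribution framework of Definition~\ref{def:levdistr}: split each value $c_a$ into a $y$-smooth part $d$ and a $y$-rough part $e$, feed $d$ into the equidistribution hypothesis~\eqref{eEquiProgression}, and control $e$ via a Rankin-style Euler-product upper bound.

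Concretely, I would fix a parameter $y = M^{1/u}$ where $u = u(T) \to \infty$ slowly, chosen small enough that every smooth divisor $d$ under consideration stays below $M^\theta$. For each $a$ with $\chi_T(a) > 0$, write $c_a = d e$ with $d = \prod_{p^k \| c_a,\, p \leq y} p^k$, so that $(d,e)=1$. The hypothesis $f \in \mathcal{M}(A,\epsilon,C)$ of Definition~\ref{dMultRegion} then gives
$$f(c_a) \leq f(d) \min\{A^{\Omega(e)},\, C e^\epsilon\},$$
while~\eqref{Roll Over Beethoven} forces $\Omega(e) \leq \alpha u + O(1)$ so that $A^{\Omega(e)}$ stays polylogarithmic in $M$.

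Next I would expand $\sum_a \chi_T(a) f(c_a)$ along this factorisation and interchange summation: for fixed $d$ and $e$ with $de$ squarefree-supported in the required sense, the inner count $C_{de}(T)$ is evaluated via~\eqref{eEquiProgression} as $h_T(de)\, M + \text{error}$. Summing the rough weight $A^{\Omega(e)} h_T(e)$ over $y$-rough $e \leq \widetilde B M^\alpha/d$ is majorised by an Euler product which, by~\eqref{elowaverg} and~\eqref{eHighPower}, is controlled by $\prod_{y < p \leq M}\bigl(1-h_T(p)\bigr)^{-A}$. Combining this with the smooth sum $\sum_d f(d) h_T(d)$ and the Mertens-type factor $\prod_{B < p \leq y}(1 - h_T(p))$ extracted from inclusion--exclusion on the smooth primes, one reassembles the shape
$$M \prod_{B < p \leq M}\bigl(1-h_T(p)\bigr) \cdot \sum_{a \leq M} f(a) h_T(a)$$
as a Rankin majorant, using that the tail $y < p \leq M$ of the product is absorbed into the rough sum.

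The main obstacle I expect is the ``bad'' range where $e$ has many prime factors bunched just above $y$, or where $c_a$ is divisible by a large prime power: there the $A^{\Omega(e)}$ bound is too weak, and one must switch to the polynomial majorant $f(e) \leq Ce^\epsilon$ from Definition~\ref{dMultRegion}, showing that this contribution is absorbed by the error. The delicate balancing is the choice of $u$: it must be large enough that $A^{\alpha u}$ is dominated by the Mertens saving produced by~\eqref{elowaverg}, yet small enough that all divisors that enter satisfy the level bound $de \leq M^\theta$. Accumulating the equidistribution errors $O(M^{1-\xi})$ and the product errors in~\eqref{eEquiProgression} across all smooth divisors without swamping the main term is the final technical hurdle, and it is here that the $T$-uniformity of the constants $\theta, \xi, \kappa, \lambda_1, \lambda_2, B, K$ in Definitions~\ref{densfnct} and~\ref{def:levdistr} becomes essential.
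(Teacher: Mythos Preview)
The paper does not contain a proof of this theorem: it is explicitly quoted from the companion paper~\cite{companion} (see the opening of \S\ref{s:thank msri for not letting me go to the workshop that started the project}, ``In this section we recall the required bounds proved in~\cite{companion}'', and the line ``We are now ready to state the main result in~\cite{companion}'' immediately preceding the statement). So there is no proof here to compare your proposal against.

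That said, your outline is the standard Nair--Tenenbaum architecture and is almost certainly what the companion paper does in spirit: decompose $c_a$ into a $y$-smooth part and a $y$-rough part, use the equidistribution hypothesis~\eqref{eEquiProgression} on the smooth divisor, bound the rough part via~\eqref{Roll Over Beethoven} combined with the two regimes in Definition~\ref{dMultRegion}, and reassemble. Your identification of the delicate points---choosing $u$ so that $A^{\alpha u}$ is beaten by the Mertens saving while keeping the level below $M^\theta$, and switching to the $Ce^\epsilon$ majorant for the bad range---is accurate. One small caution: in your description you speak of summing over ``$e$ with $de$ squarefree-supported'' and later of accumulating errors over ``all smooth divisors'' with $de\leq M^\theta$; in the actual argument only $d$ needs to lie below the level $M^\theta$ (the rough part $e$ is not fed into~\eqref{eEquiProgression} but handled by a Rankin/Brun-type upper bound), so be careful not to conflate the two ranges when you write it out.
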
Let us now recall the corresponding lower bound proved in~\cite{companion}. 
\begin{theorem}\label{thm:lower}Keep the notation and assumptions of Theorem~\ref{tMain}. 
Assume, in addition, that  $f:\N\to [0,\infty) $ is a multiplicative function for which 
$$\mathrm{ for \ each }\   L \geq 1 \ \mathrm{ one \ has } \ \inf\{f(m): \Omega(m) \leq  L  \} > 0 
$$ and that the  error term in Definition~\ref{def:levdistr}  satisfies 
$$ C_d(T) = h_T(d) M(T) \left\{ 1+o_{T\to\infty}\left(\prod_{\substack{ B<p\leq M(T) \\  p\nmid d  }} (1-h_T(p)
)^2\right)\right\} + O(M(T)^{1-\xi}) $$
whevever $d\leq M(T)^\theta$. Then for all $T\geq 1 $  we have $$ \sum_{a\in \mathcal A} \chi_T(a) f(c_a) 
\gg M(T)\prod_{  p\leq M(T)    } (1-h_T(p) ) \sum_{\substack{ a\leq M(T)  }} f(a)   h_T(a) ,$$ where the implied constants are
  independent of $T$ and $M$.\end{theorem}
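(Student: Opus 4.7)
The plan is to reduce the lower bound to a sieving estimate by restricting to those $a\in\mathcal A$ whose $c_a$ has a prescribed small-prime skeleton, and then use the strengthened equidistribution to run a Fundamental-Lemma-style sieve. Fix a parameter $y := M^{\theta/3}$ and for each $a\in\mathcal A$ write $c_a = d\cdot n$, with $d$ the $y$-smooth part and $n$ the $y$-rough part. By~\eqref{Roll Over Beethoven} we have $\Omega(n)\leq\log_y(\widetilde B M^\alpha)=O(1)$, so the hypothesis $\inf\{f(m):\Omega(m)\leq L\}>0$ yields a constant $c_0>0$ with $f(n)\geq c_0$ for every such $n$. Restricting further to those $a$ for which $d$ is squarefree (so $\gcd(d,n)=1$) and using multiplicativity of $f$, we have $f(c_a)\geq c_0 f(d)$ on this subfamily. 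By positivity of $\chi_T$,
\[
\sum_{a\in\mathcal A}\chi_T(a)f(c_a)\;\geq\;c_0\!\sum_{\substack{d\leq y\\ d\text{ squarefree}}}\!f(d)\sum_{\substack{a\in\mathcal A\\ d\mid c_a,\ P^-(c_a/d)>y}}\!\chi_T(a).
\]

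For each fixed $d$, the inner sum is a sifted count attached to the equidistributed sequence $\mathfrak C$. I would apply the Rosser--Iwaniec lower-bound sieve at level $D=M^{\theta-\delta}$ for arbitrarily small $\delta>0$, expressing it as $\sum_{e\mid P(y),\ \gcd(e,d)=1,\ e\leq D/d}\lambda^-(e)\, C_{de}(T)$, where $P(y):=\prod_{p\leq y}p$. Since $de\leq M^{\theta-\delta}$, the strengthened $o(1)$-error hypothesis of Theorem~\ref{thm:lower} gives $C_{de}(T)=h_T(de)M(1+o(1))+O(M^{1-\xi})$. Summing the Rosser weights $\lambda^-(e)$ against $h_T(de)$ and invoking the density-class bound~\eqref{elowaverg} to dominate the aggregated $o(1)$ by the expected Mertens-type main term, one obtains
\[
\sum_{\substack{a\in\mathcal A\\ d\mid c_a,\ P^-(c_a/d)>y}}\chi_T(a)\;\gg\; M\, h_T(d)\prod_{\substack{p\leq y\\ p\nmid d}}(1-h_T(p)).
\]

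Reassembling the outer sum gives
\[
\sum_{a\in\mathcal A}\chi_T(a)f(c_a)\;\gg\; M\prod_{p\leq y}(1-h_T(p))\sum_{\substack{d\leq y\\ d\text{ squarefree}}}f(d)h_T(d)\prod_{p\mid d}(1-h_T(p))^{-1}.
\]
Applying~\eqref{elowaverg} to compare $\prod_{p\leq y}(1-h_T(p))$ with $\prod_{p\leq M}(1-h_T(p))$ at the cost of a bounded power of $(\log M)/(\log y)$, and then using~\eqref{eLowPower} and~\eqref{eHighPower} to extend the restricted sum over squarefree $d\leq y$ into the full sum $\sum_{a\leq M}f(a)h_T(a)$ by a standard Euler-product comparison (the prime-power contributions are negligible and the tail $y<p\leq M$ is absorbed), one recovers exactly the claimed lower bound.

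The main obstacle is the calibration: the sifting parameter $y$ must be large enough that the final Euler-product reassembly loses only a constant factor, yet small enough that the Rosser--Iwaniec sieve of level $D=M^{\theta-\delta}$ converges to the expected Mertens main term in the desired uniformity. The replacement of the $O(1)$-error hypothesis of Theorem~\ref{tMain} by the sharper $o(1)$-error hypothesis is precisely what allows the accumulated sieve errors to be absorbed into the shrinking target $M\prod_{p\leq M}(1-h_T(p))$, which is strictly smaller than $M$; without this refinement the sifted main term and the error term would be of comparable size.
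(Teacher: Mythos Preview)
This theorem is not proved in the paper itself: it is quoted from the companion article~\cite{companion} (see the opening sentence of \S\ref{s:thank msri for not letting me go to the workshop that started the project}), so there is no in-paper argument to compare against. Your outline follows the standard route for such lower bounds---smooth/rough factorisation together with a Fundamental-Lemma-type lower bound sieve---and the overall architecture is correct, but there is a genuine slip in the sieve step.

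Your inner count $\sum_{a:\,d\mid c_a,\ P^-(c_a/d)>y}\chi_T(a)$ asks that the \emph{entire} $y$-smooth part of $c_a$ equal the squarefree integer $d$; call this quantity $S(d)$. The sieve expression you write down, $\sum_{e\mid P(y),\ \gcd(e,d)=1}\lambda^-(e)\,C_{de}(T)$, only sifts out primes \emph{coprime} to $d$, so it furnishes a lower bound for the larger quantity
\[
T(d):=\sum_{\substack{a\in\mathcal A\\ d\mid c_a,\ p\nmid c_a\ \forall p\le y,\ p\nmid d}}\chi_T(a).
\]
Since $S(d)\le T(d)$, a lower bound for $T(d)$ says nothing about $S(d)$; and on $T(d)\setminus S(d)$ one has $p^2\mid c_a$ for some $p\mid d$, so the $y$-smooth part of $c_a$ is a proper (non-squarefree) multiple of $d$ and the inequality $f(c_a)\ge c_0 f(d)$ is no longer justified---nothing in the hypotheses forces $f(p^e)\gtrsim f(p)$.

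To repair this you must also sieve out the events $p^2\mid c_a$ for $p\mid d$, which brings in moduli $dp$ with $p\mid d$ (hence non-squarefree) and an extra factor essentially $\prod_{p\mid d}\bigl(1-h_T(p^2)/h_T(p)\bigr)$ that must be shown bounded away from zero using~\eqref{eLowPower}--\eqref{eHighPower}. That step, together with your final claim that the truncated squarefree sum over $d\le y$ already captures a positive proportion of $\sum_{a\le M}f(a)h_T(a)$, both require more care than the sketch provides.
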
 
We finish this section with lemmas that will be needed in the forthcoming applications.
\begin{lemma}\label{lem:chebotrv} 
Let $Q\in \Z[x_1, \ldots,x_n]$ be non-constant and without repeated factors over $\Q$. Then as $x\to \infty $ one has 
$$ \sum_{p\leq x }  \frac{\#\{\b x \in \F_p^n: Q(\b x )=0\}}{p^{n-1}}=r  \frac{x}{\log x} (1+o(1) ),
$$  where $r$ is the number of distinct irreducible factors of $Q$ in $\Q[x_1, \ldots, x_n]$.
Furthermore, there exists a constant $c=c(Q)$ such that for $x\geq 2 $ one has 
$$ 
\prod_{p\ \mathrm{ prime}, p\leq x} 
\bigg(1-\frac{\#\{\b x \in \mathbb F_p^n: Q(\b x ) =0 \}}{p^n}\bigg)= c (\log x)^{-r} +O( (\log x)^{-r-1} ),
$$  where  the implied constant depends on  $Q$.
\end{lemma}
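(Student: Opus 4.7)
The plan is to factor $Q = Q_1 \cdots Q_r$ into its distinct irreducible factors over $\mathbb{Q}$ and to analyze $\#V(Q)(\mathbb{F}_p)$ by inclusion--exclusion. For $i \neq j$ the intersection $V(Q_i) \cap V(Q_j)$ has codimension at least two in $\mathbb{A}^n$, so by (e.g.) Lang--Weil its $\mathbb{F}_p$-point count is $O(p^{n-2})$. Consequently
\[
\#\{\mathbf x\in\F_p^n:Q(\mathbf x)=0\} \;=\; \sum_{i=1}^r \#V(Q_i)(\mathbb{F}_p) \;+\; O(p^{n-2}),
\]
and the error contributes $O(\log\log x)$ after dividing by $p^{n-1}$ and summing, which is negligible compared to $x/\log x$.

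Next, for each $i$ the geometric variety $V(Q_i)_{\overline{\mathbb{Q}}}$ decomposes into absolutely irreducible components permuted transitively by $\mathrm{Gal}(\overline{\mathbb{Q}}/\mathbb{Q})$; let $K_i/\mathbb{Q}$ be the Galois closure of the minimal field of definition of one component. For $p$ unramified in $K_i$, Lang--Weil gives
\[
\#V(Q_i)(\mathbb{F}_p) = e_i(p)\, p^{n-1} + O(p^{n-3/2}),
\]
where $e_i(p)$ is the number of components fixed by a Frobenius at $p$. The Chebotarev density theorem, combined with Burnside's formula for the average number of fixed points of a transitive permutation action, yields $\sum_{p \leq x} e_i(p) = \mathrm{li}(x)(1+o(1))$. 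Summing over $i$ and dividing by $p^{n-1}$ proves the first identity.

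For the second identity, I would take logarithms and expand
\[
\log \prod_{p\leq x}(1-\varrho_Q(p)) = -\sum_{p\leq x}\varrho_Q(p) + O\bigg(\sum_p \varrho_Q(p)^2\bigg),
\]
where the second sum converges absolutely since $\varrho_Q(p) \ll 1/p$. To evaluate the remaining sum I would invoke an effective form of Chebotarev (e.g.\ Lagarias--Odlyzko or the unconditional power-saving consequences of Artin $L$-function analytic continuation for the $K_i$), together with partial summation applied to the first identity in its quantitative form, to obtain
\[
\sum_{p\leq x}\varrho_Q(p) = r\log\log x + C_Q + O\!\left((\log x)^{-1}\right).
\]
Exponentiating and comparing with Mertens' theorem $\prod_{p\leq x}(1-1/p)^r = e^{-\gamma r}(\log x)^{-r}(1 + O((\log x)^{-1}))$ yields $c = c(Q) > 0$ with the claimed error term. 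Equivalently, the Euler product $\prod_p (1-\varrho_Q(p))(1-1/p)^{-r}$ converges absolutely (since the general factor is $1+O(p^{-3/2})$), and its truncated tail is $O(x^{-1/2})$, which combined with Mertens gives the required asymptotic.

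\textbf{Main obstacle.} The routine case is when each $Q_i$ is absolutely irreducible, where Lang--Weil plus the prime number theorem suffice. The genuine difficulty is the presence of geometrically reducible $\mathbb{Q}$-irreducible factors: their contribution requires Chebotarev in the Galois closures $K_i$, and extracting the error term $O((\log x)^{-r-1})$ rather than $o((\log x)^{-r})$ demands a quantitative, power-saving version of Chebotarev rather than the naive density statement.
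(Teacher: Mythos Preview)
Your approach is correct and essentially matches the paper's: factor geometrically, apply Lang--Weil to express the leading term of $\#V(Q)(\F_p)/p^{n-1}$ as a Frobenius fixed-point count on the set of geometric components, invoke Chebotarev together with Burnside to obtain the first asymptotic, then use partial summation and take logarithms for the product formula. The paper organises this with a single field $K$ (adjoining all coefficients of the $\overline{\Q}$-factorisation) and one permutation action, whereas you treat each $\Q$-irreducible $Q_i$ separately; these are equivalent bookkeepings.

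Two minor points. First, you do not need a power-saving Chebotarev: the standard effective version with error $O(x/(\log x)^2)$ (a consequence of the classical zero-free region) already yields $\sum_{p\leq x}\varrho_Q(p)=r\log\log x+C_Q+O((\log x)^{-1})$ after partial summation, which is exactly what the paper uses. Second, your alternative ``equivalently'' argument at the end is not quite right: the general factor of $\prod_p(1-\varrho_Q(p))(1-1/p)^{-r}$ is $1+(r-c_Q(p))/p+O(p^{-3/2})$, and when some $Q_i$ is geometrically reducible the quantity $c_Q(p)$ genuinely fluctuates with the Frobenius class, so the product does \emph{not} converge absolutely by the $O(p^{-3/2})$ reasoning you give. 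The product does converge (this is essentially an Artin $L$-value), but establishing that requires Chebotarev again, so it is not a shortcut. Your primary argument via partial summation is the correct one.
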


\begin{proof}
We factor $Q$ over $\overline{\Q}$ as $c_0  \prod_{i = 1}^t Q_i$ with $c_0$ in $\Q^{*}$, with $Q_i \in \overline{\Z}[x_1, \ldots,x_n]$ irreducible and with the property that if $Q_i$ occurs in the factorisation, then so does each of its Galois conjugates. We write $S = \{Q_1, \dots, Q_t\}$ for the set of factors obtained in this way. Let $K/\Q$ be the number field obtained by adding all the coefficients appearing in the factorisation. Since the factors come as Galois orbits, the field $K$ must be Galois. The group $\text{Gal}(K/\Q)$ acts on $S$ by permuting the factors. By the Lang--Weil bounds~\cite{MR65218} we have that
$$
\frac{\#\{\b x \in \mathbb F_p^n: Q(\b x ) =0 \}}{p^n} = \frac{c_Q(p)}{p}+O\left(\frac{1}{p^{\frac{3}{2}}}\right),
$$
where $c_Q(p)$ denotes the number of distinct irreducible factors of $Q$ defined over $\mathbb{F}_p$, when one factorizes the polynomial $Q \bmod p$ in $\overline{\mathbb{F}_p}[x_1, \ldots, x_n]$: in other words the irreducible factors in the $\mathbb{F}_p$-factorisation that remain irreducible factors in the $\overline{\mathbb{F}_p}$-factorisation. We now wish to express the function $c_Q(p)$ as a function of the Artin symbol of $p$ in $K/\Q$, for sufficiently large primes $p$.

For a prime $p$ that is also unramified in $K/\Q$, $\text{Art}(p, K/\Q)$ defines a conjugacy class in $\text{Gal}(K/\Q)$, which we view as permutations on $S$ via the action. The number of fixed points of the resulting permutation is independent of the element in the conjugacy class, and we denote this function of $\text{Gal}(K/\Q)$ as $g \mapsto \text{Fix}(g)$. This defines a function on sufficiently large primes via $p \mapsto \text{Fix}(\text{Art}(p,K/\Q))$. We claim that for $p$ sufficiently large we have that
$
c_Q(p)=\text{Fix}(\text{Art}(p,K/\Q)).
$
Indeed, observe that since $Q$ has no repeated factors over $\Q$, it follows that its reduction modulo $p$ has no repeated factors in $\mathbb{F}_p[x_1, \ldots, x_n]$ provided that we take $p$ sufficiently large. Furthermore, for $p$ sufficiently large, choosing any prime $\bar{\mathfrak{p}}$ above $p$ in $\overline{\Z}$, we have that all of the elements of $S$ remain irreducible when reduced modulo $\bar{\mathfrak{p}}$. 

We claim that if:
\begin{enumerate}
\item[(P1)] $Q$ has no repeated factors modulo $p$,
\item[(P2)] all of the elements of $S$ remain irreducible modulo $\bar{\mathfrak{p}}$,
\item[(P3)] $p$ is unramified in $K/\Q$,
\item[(P4)] $c_0$ is coprime to $p$, 
\end{enumerate} then $c_Q(p)=\text{Fix}(\text{Art}(p,K/\Q)).$
To see this, let us fix a prime $\mathfrak{p}$ of $\mathcal{O}_K$ lying above $p$. Recall that there is a unique element $\sigma \in \text{Art}(p,K/\Q)$ such that $\sigma(\mathfrak{p})=\mathfrak{p}$ and $\sigma(\alpha) \equiv \alpha^p \bmod \mathfrak{p}$ for each $\alpha$ in $\mathcal{O}_K$. Now let us reduce each $Q_i$ modulo $\mathfrak{p}$. The factors $Q_i$ remain distinct thanks to (P1) and also remain irreducible thanks to (P2). Since $p$ is unramified thanks to (P3), we can find the unique element $\sigma$ as above. Furthermore, $c_0$ being non-zero makes sure that $Q$ is not $0$ modulo $\mathfrak{p}$. Thus, the reduction of the $Q_i$ is truly the factorisation of $Q$ modulo $\mathfrak{p}$. 

If $\sigma(Q_i)=Q_i$, then all of the coefficients $\gamma$ of $Q_i$ satisfy $\gamma^p=\gamma$ when reduced modulo $\mathfrak{p}$, i.e. they are all in $\mathbb{F}_p$. So each fixed point of $\sigma$ gives an irreducible factor of $Q$ over $\overline{\mathbb{F}_p}$ that is already in $\mathbb{F}_p$. Conversely, suppose that $\sigma(Q_i) \neq Q_i$. Since the factors remain distinct modulo $\mathfrak{p}$ by (P1), then $\sigma(Q_i)$ and $Q_i$ are also distinct factors modulo $\mathfrak{p}$. But this means that the polynomial $Q_i$ modulo $\mathfrak{p}$ and the same polynomial with all coefficients raised to the power $p$ are different factors of $Q$ modulo $\mathfrak{p}$. In other words, $Q_i$ is not defined over $\mathbb{F}_p$. Hence we have precisely proved that under the assumptions (P1)-(P4), the quantity $\text{Fix}(\text{Art}(p,K/\Q))$ equals the number of $\overline{\mathbb{F}_p}$-irreducible components of $Q$ that are defined over $\mathbb{F}_p$, i.e. it equals $c_Q(p)$.

Recall that each of (P1)-(P4) is satisfied for all sufficiently large primes.
By the Chebotarev density theorem we obtain the following for all $x\geq 2 $, 
$$ \sum_{p\leq x }  \frac{\#\{\b x \in \F_p^n: Q(\b x )=0\}}{p^{n-1} }=\left(\frac{\sum_{g \in \text{Gal}(K/\Q)}\text{Fix}(g)}{\#\text{Gal}(K/\Q)} \right)
  \frac{x}{\log x} +O\left(\frac{x}{(\log x)^2 } \right)
.$$ Using partial summation we obtain a constant $B$ depending only on $Q$ such that 
\begin{equation}\label{ta gemista sto tapsi TWRA!!!}
\sum_{p \leq x} \frac{\#\{\b x \in \mathbb F_p^n: Q(\b x ) =0 \}}{p^n} = \left(\frac{\sum_{g \in \text{Gal}(K/\Q)}\text{Fix}(g)}{\#\text{Gal}(K/\Q)} \right) ( \log \log x )
+ B + O(1/\log x)
,\end{equation} from which one can deduce an asymptotic for the product over primes $p\leq x $ in the statement of the lemma by taking logarithms.

To complete the proof, it suffices to observe that if $G$ is a finite group acting on a finite set $X$ and if $\text{Fix}(g)$ denotes the number of fixed points of an element $g$ in $G$ viewed as permutation of $X$, then
$$\frac{\sum_{g \in G}\text{Fix}(g)}{\#G}
$$
equals the number of orbits of $G$ acting on $X$. In our case the number of $\text{Gal}(K/\Q)$-orbits acting on $S$ is $r$, thus completing the argument.
\end{proof}

\begin{lemma}
\label{formulasfataltotheflesh}  
Let $Q\in \Z[x_1, \ldots,x_n]$ be non-constant and without repeated factors over $\Q$. 
Then for any prime $p$ the number of $\b x \in (\Z/p^2\Z)^n$ for which $Q(\b x )\equiv 0 \md{p^2}$ 
is $O(p^{n-2})$, where the implied constant depends at most on $Q$.
  \end{lemma}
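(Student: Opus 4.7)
The plan is to reduce modulo $p$ and stratify the count according to whether the reduction of $\mathbf{x}$ is a smooth or singular point of the hypersurface $\{\bar Q = 0\}$ over $\mathbb{F}_p$. Writing each $\mathbf{x} \in (\mathbb{Z}/p^2\mathbb{Z})^n$ uniquely as $\mathbf{x} = \tilde{\mathbf{x}} + p\mathbf{y}$, where $\tilde{\mathbf{x}} \in \{0, \ldots, p-1\}^n$ lifts $\bar{\mathbf{x}} \in \mathbb{F}_p^n$ and $\mathbf{y} \in (\mathbb{Z}/p\mathbb{Z})^n$, the Taylor expansion gives
$$Q(\mathbf{x}) \equiv Q(\tilde{\mathbf{x}}) + p\,\nabla Q(\tilde{\mathbf{x}}) \cdot \mathbf{y} \pmod{p^2}.$$
Hence $Q(\mathbf{x}) \equiv 0 \pmod{p^2}$ is equivalent to the pair of conditions $\bar Q(\bar{\mathbf{x}}) = 0$ in $\mathbb{F}_p$ and a linear congruence on $\mathbf{y}$ whose coefficients are the reduced partial derivatives of $Q$ at $\bar{\mathbf{x}}$.

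I would then split the count into two strata. In the \emph{smooth stratum}, where some partial derivative of $Q$ at $\bar{\mathbf{x}}$ is nonzero mod $p$, the linear condition on $\mathbf{y}$ is nondegenerate and admits exactly $p^{n-1}$ solutions. Combined with a Schwartz--Zippel or Lang--Weil bound of $O(p^{n-1})$ on the number of $\bar{\mathbf{x}} \in \mathbb{F}_p^n$ lying on $\{\bar Q = 0\}$, this stratum contributes the desired order. In the \emph{singular stratum}, every $\mathbf{y} \in \mathbb{F}_p^n$ might in principle be admissible; here one uses that $Q$ is squarefree over $\mathbb{Q}$, so that its reduction $\bar Q$ is also squarefree in $\mathbb{F}_p[x_1, \dots, x_n]$ for every prime $p$ above a threshold $p_0 = p_0(Q)$. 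This implies the singular locus of $\{\bar Q = 0\}$ has dimension at most $n - 2$, and an elementary Schwartz--Zippel bound applied to the Jacobian ideal controls its size. Combining the two strata yields the claimed estimate.

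The main obstacle is the uniform control over all primes: at the finitely many exceptional primes $p \leq p_0(Q)$, the reduction $\bar Q$ may fail to be squarefree and the dimension bound on its singular locus can degenerate. These primes are absorbed into the $Q$-dependent implied constant by a trivial bound. A subsidiary technical issue is justifying the Schwartz--Zippel estimate on the singular locus when the defining Jacobian ideal is not a complete intersection; this is standard and can be deduced by induction on the dimension of the vanishing locus from the basic Schwartz--Zippel bound, with implied constants depending only on the degrees of the polynomials defining $Q$.
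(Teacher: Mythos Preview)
Your approach is essentially identical to the paper's: both stratify over the reduction $\bar{\mathbf{x}}\in\mathbb F_p^n$, use the Taylor/Hensel step to count exactly $p^{n-1}$ lifts above each smooth point of $\{\bar Q=0\}$ (combined with the $O(p^{n-1})$ bound on the hypersurface), and bound the singular contribution by the trivial $p^n$ lifts times $O(p^{n-2})$ singular points, the latter following from squarefreeness of $\bar Q$ for $p$ large and absorbing finitely many bad primes into the constant. The only cosmetic difference is that the paper invokes Lang--Weil directly for both the hypersurface and the singular-locus point counts, which sidesteps your subsidiary worry about the Jacobian ideal not being a complete intersection; note also that both arguments actually produce the bound $O(p^{2n-2})$, which is what is used downstream.
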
\begin{proof}
For a point $\b t $ in $(\Z/p\Z)^n$ satisfying $Q(\b t)\equiv 0\md {p}$, we denote  
$$
N(\b t)=\#\left \{\b x  \in (\Z/p^2\Z)^n:Q(\b x )\equiv 0 \md {p^2}   \ \text{and} \ \b x \equiv \b t \md p \right \}.
 $$
By definition, we have that
$$
\#\left \{\b x  \in (\Z/p^2\Z)^n:Q(\b x )\equiv 0 \md {p^2}  \right \}
=
\sum_{\substack{ \b t \in (\Z/p\Z)^n \\ Q(\b t )\equiv 0 \md p }} N(\b t ). 
$$
Suppose that $\b t $ in $(\Z/p\Z)^n$ satisfies both $Q(\b t )\equiv 0\md{p}$ and $\nabla Q(\b t) \not \equiv \mathbf{0} \mod p$. 
Then Hensel's lemma implies that $N(\b t )=p^{n-1}$. 
The Lang--Weil estimates~\cite{MR65218} imply that the number of $\b t \in (\Z/p\Z)^n$ for which $Q(\b t )\equiv 0 \md p $ is $O(p^{n-1})$, hence, 
$$
\#\left \{\b x  \in (\Z/p^2\Z)^n:Q(\b x )\equiv 0 \md {p^2}  \right \}
=O(p^{2n-2})+
\sum_{\substack{ \b t \in (\Z/p\Z)^n \\ Q(\b t )\equiv 0 \md p \\ \nabla Q(\b t)  \equiv \mathbf{0} \md p }} N(\b t )
.$$ 
Since one has the trivial bound 
 $N(\b t )\leq \#\left \{\b x  \in (\Z/p^2\Z)^n: \b x \equiv \b t \md p \right \}=p^n$, it is sufficient for the proof to show that 
$$ 
\#\left \{\b t \in (\Z/p\Z)^n:Q(\b t )\equiv 0 \md p,  \nabla Q(\b t)  \equiv \mathbf{0} \md p \right\}=O(p^{n-2}).
$$
Since $Q$ has no repeated polynomial factors over  $\Q$, it follows that it 
has no repeated polynomial factors over  $\mathbb{F}_p$ for all sufficiently large primes $p$. For these $p$ we therefore see that 
the intersection $Q(\b t )=  \nabla Q(\b t)=\mathbf{0}$ defines a subvariety of $\mathbb{A}_{\mathbb{F}_p}^n$ of codimension at least $2$. 
The Lang--Weil estimates~\cite{MR65218}   therefore provide the required bound $O(p^{n-2})$.
\end{proof}

\begin{lemma}\label{lem:lord of all fevers and plague}
Fix any positive real numbers $c_0,c_1,c_2,c_3$ and assume that $F:\mathbb N\to [0,\infty)$ is a multiplicative function such that 
\begin{align}
\label{eFbound}
F(p^e) \leq \min \left\{\frac{c_0}{p}, \frac{p^{c_1}}{p^{ec_2}}\right\}
\end{align}
for all primes $p$ and $e\geq 1 $
and $F(p^e) \leq c_3/p^2$ for all $p,e\geq 2$.
Fix any $C,C'>0$ and 
 assume that 
$G:\mathbb N\to [0,\infty)$ is a function such   
 that for all coprime positive integers $a,b$ one has  $G(ab ) \leq  G(a) \min\{C^{\Omega(b)}, C'b^{c_2/2}\}$.

Then  for all $x\geq 1 $ we have 
$$ \sum_{\substack{n\leq x\\P^-(n)> c_0}} F(n) G(n) \ll  \exp\left(\sum_{c_0< p\leq x} F(p) G(p) \right)
,$$ where the implied constant depends at most on $c_i$ and $C,C'$.
\end{lemma}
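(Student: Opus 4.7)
The plan is to separate $n$ into its squarefree part $s$ and its squarefull part $t$ (so $n=st$ with $\gcd(s,t)=1$) and to analyse each factor separately. Applying the submultiplicativity hypothesis on $G$ to the coprime pair $(s,t)$ yields $G(st)\le G(s)\min\{C^{\Omega(t)},C't^{c_2/2}\}$, and combining with the multiplicativity of $F$ gives the domination
\[
\sum_{\substack{n\le x\\P^-(n)>c_0}}F(n)G(n) \;\le\; \Biggl(\sum_{\substack{s\le x,\ \mu^2(s)=1\\P^-(s)>c_0}}F(s)G(s)\Biggr)\Biggl(\sum_{\substack{t\text{ squarefull}\\P^-(t)>c_0}}F(t)\min\{C^{\Omega(t)},C't^{c_2/2}\}\Biggr).
\]

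First I would verify that the squarefull factor on the right is a bounded constant depending only on $c_i,C,C'$. Since every prime of $t$ appears with exponent $\ge 2$, this sum factors as an Euler product over primes $p>c_0$ with $p$-th factor $1+\sum_{e\ge 2}F(p^e)\min\{C^e,C'p^{ec_2/2}\}$. Balancing the two available bounds $F(p^e)\le c_3/p^2$ and $F(p^e)\le p^{c_1-ec_2}$ against the two branches of the $\min$ (the first pair handles small $e$, the second pair produces geometric decay for large $e$), each Euler factor is $1+O(1/p^2)$ once $p$ exceeds an explicit threshold, and the finitely many primes below the threshold contribute a bounded constant; hence the product converges.

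For the squarefree sum, I would extract the largest prime factor: for $s>1$ write $s=pm$ with $p=P^+(s)$, $\gcd(m,p)=1$, and $P^+(m)<p$. Submultiplicativity gives $G(pm)\le G(p)C^{\Omega(m)}$, whence $F(s)G(s)\le F(p)G(p)\cdot F(m)C^{\Omega(m)}$. Since $F(m)C^{\Omega(m)}$ is now multiplicative, the inner sum over $m$ is dominated by the truncated product $\pi(p^-)$ with $\pi(y):=\prod_{c_0<q\le y}(1+CF(q))$. Using the telescoping identity $CF(p)\pi(p^-)=\pi(p)-\pi(p^-)$ I would rewrite the outer sum as
\[
\sum_{c_0<p\le x}F(p)G(p)\pi(p^-) \;=\; \frac{1}{C}\sum_{c_0<p\le x}G(p)\bigl(\pi(p)-\pi(p^-)\bigr),
\]
and then apply Abel summation with respect to the monotone quantity $\pi$, tracking the weights $G(p)$ prime-by-prime, to produce the bound $\ll\exp\bigl(\sum_{c_0<p\le x}F(p)G(p)\bigr)$.

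The main obstacle lies precisely in that last step. The hypothesis on $G$ is a one-sided submultiplicative inequality, and it does not imply the clean domination $G(s)\le\prod_{p\mid s}G(p)$ on squarefree $s$ that would let one expand the sum directly as an Euler product in the variable $F(p)G(p)$. The Abel/telescoping argument above is what recovers the sharp exponential involving the individual prime values $G(p)$, rather than the weaker bound $\exp(C\sum F(p))$ that one would obtain by merely plugging in the uniform estimate $G(n)\le G(1)C^{\Omega(n)}$.
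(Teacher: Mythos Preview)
Your squarefull-factor estimate is correct and parallels the paper's handling of the $e\ge 2$ terms. The gap lies in the squarefree part: the Abel/telescoping step cannot yield $\exp\bigl(\sum_{c_0<p\le x}F(p)G(p)\bigr)$, because the lemma as stated is in fact false. Take $c_0=1$, $c_1=c_2=2$, $C=2$, $C'=1$, $F(n)=\mu^2(n)/n$, and $G(1)=1$, $G(n)=2^{\omega(n)-1}\mu^2(n)$ for $n>1$. The hypothesis $G(ab)\le G(a)\min\{2^{\Omega(b)},b\}$ holds for all coprime $a,b$ (using $2^{\omega(b)}\le b$ for squarefree $b$), yet
\[
\sum_{n\le x}F(n)G(n)\;\asymp\;\sum_{\substack{n\le x\\\mu^2(n)=1}}\frac{2^{\omega(n)}}{n}\;\asymp\;(\log x)^2,
\qquad
\exp\Bigl(\sum_{p\le x}F(p)G(p)\Bigr)=\exp\Bigl(\sum_{p\le x}\frac1p\Bigr)\asymp\log x.
\]
In this example your telescoped expression $\frac1C\sum_p G(p)(\pi(p)-\pi(p^-))$ equals $\frac12(\pi(x)-1)\asymp(\log x)^2$, so no summation-by-parts manoeuvre can reduce it to $\log x$.

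You identified the obstruction exactly: the submultiplicative hypothesis does \emph{not} give $G(s)\le\prod_{p\mid s}G(p)$ on squarefree $s$, and nothing recovers the individual values $G(p)$ in the exponent. The paper's own proof asserts the equivalent inequality $G(b)\le H'(b)$ for the multiplicative function $H'$ with $H'(p)=G(p)$; on squarefree $b$ this is precisely $G(b)\le\prod_{p\mid b}G(p)$, and it fails for the same reason. The statement becomes correct --- and both the paper's argument and a direct Euler-product version of yours go through --- if one additionally assumes $G$ multiplicative; this extra assumption holds in every application of the lemma in the paper.
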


\begin{proof} We define a    multiplicative function  
$H'$ such that when $p$ is  prime and $e\geq 2 $ one has 
$H'(p^e)= \min\{C^e, C'p^{c_2 e/2}\} $ 
while $H'(p)= G(p)$. It is not difficult to show that 
 for all coprime positive integers $a,b$  we have  $G(ab ) \leq  G(a) H'(b)$.
Hence, $G(b) \leq H'(b)$ for all $b$ and therefore the sum in the lemma is at most 
 $$ \sum_{\substack{n\leq x\\P^-(n)> c_0}} F(n) H'(n)  \leq 
\prod_{\substack{n\leq x\\P^-(n)> c_0}} \left(1+ \sum_{e\geq 1 } F(p^e) H'(p^e) \right)\leq \exp\l(\sum_{c_0<p\leq x, e\geq 1 } F(p^e) H'(p^e) \right)
 $$ due to the inequality $1+z\leq \mathrm e^z$ valid for all $z\in \mathbb R$.
Let $\mathfrak E$ be a positive integer that will be specified later. 
The contribution of  $e> \mathfrak E$ is at most 
$$p^{c_1} \sum_{e> \mathfrak E } p^{-e c_2}H'(p^e)
\leq 
C' p^{c_1} \sum_{e> \mathfrak E } p^{-e c_2/2} 
\leq C'  p^{c_1 - \mathfrak E c_2/2} (1-2^{- c_2/2} )^{-1}\ll p^{c_1 - \mathfrak E c_2/2}.
$$ Taking $  \mathfrak E$ to be the least positive integer satisfying $2(c_1 +2)/c_2 \leq  \mathfrak E  $ yields the bound $\ll p^{-2}$.
The contribution of the terms in the interval $[2, \mathfrak E]$ is 
$$\leq \sum_{2\leq e \leq \mathfrak E} F(p^e) H'(p^e)
\leq \sum_{2\leq e \leq \mathfrak E} F(p^e)  C^e
\leq \frac{c_3}{p^2} \sum_{2\leq e \leq \mathfrak E}   C^e \ll \frac{1}{p^2}\ll \frac{1}{p^2}.$$ 
Thus, the overall bound becomes 
$$
 \exp\l(\sum_{c_0<p\leq x, e\geq 1 } F(p^e) H'(p^e) \right)
\leq  \exp\l( \sum_{c_0<p\leq x  } F(p) H'(p) \right)\exp\l( \sum_{c_0<p\leq x  } O(1/p^2) \right)
,$$ which is sufficient because $H'(p)=G(p)$.
\end{proof}

\begin{lemma}\label{lem:wud}Assume that $Q\in \Z[x_1,\ldots, x_n]$ is non-constant. Then for all $e\geq 1 $ and primes $p$ we have 
$$p^{-en }\#\{\b x \in (\Z/p^e \Z)^n: Q(\b x ) \equiv 0 \md {p^e} \} \ll p^{-e/\deg(Q) },$$ where the implied constant depends at most on $Q$.
\end{lemma}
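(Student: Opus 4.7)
The plan is a two-stage reduction: first, a $\Z$-linear change of variables ensures that the coefficient of $x_1^d$ in the transformed $Q$ is a $p$-adic unit; second, a classical univariate bound controls the roots of the resulting one-variable polynomial modulo $p^e$. Set $d:=\deg Q$ and let $H$ denote the degree-$d$ homogeneous component of $Q$, which is non-zero. For all but finitely many primes $p$---those dividing the content of $H$ or of $Q$---the reduction $\bar H \in \F_p[x_1,\ldots,x_n]$ is non-zero, so integers $c_2,\ldots,c_n$ may be chosen with $H(1,c_2,\ldots,c_n)\not\equiv 0 \md p$. The substitution $x_i \mapsto x_i + c_i x_1$ for $i=2,\ldots,n$ has determinant one, hence is a bijection of $(\Z/p^e\Z)^n$ preserving the count, and transforms the coefficient of $x_1^d$ in $Q$ into the $p$-adic unit $\lambda := H(1,c_2,\ldots,c_n)$. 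The finitely many exceptional primes are handled either by a $p$-specific choice of $c_i$ or by first factoring out the $p$-part of the content of $Q$ and reducing $e$ accordingly, absorbing the resulting finite loss into the implied constant.

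For each fixed $(x_2,\ldots,x_n)\in (\Z/p^e\Z)^{n-1}$, the one-variable polynomial $f(x_1):=\tilde Q(x_1,x_2,\ldots,x_n)\in \Z_p[x_1]$ has degree $d$ with unit leading coefficient $\lambda$, and after dividing through by $\lambda$ becomes monic. Factor $f(x_1) = \prod_{i=1}^d (x_1-\alpha_i)$ over $\overline{\Z_p}$. The congruence $f(x_1)\equiv 0 \md{p^e}$ is equivalent to $\sum_{i=1}^d v_p(x_1-\alpha_i)\geq e$, so by pigeonhole some index $i$ satisfies $v_p(x_1-\alpha_i)\geq e/d$. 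For each fixed $\alpha_i \in \overline{\Z_p}$, the number of $x_1 \in \Z/p^e\Z$ with $v_p(x_1-\alpha_i)\geq e/d$ is at most $p^{e-\lceil e/d\rceil} \leq p^{e(1-1/d)}$, and summing over the $d$ indices gives a univariate count of at most $d\cdot p^{e(1-1/d)}$.

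Summing over the $p^{e(n-1)}$ slices $(x_2,\ldots,x_n)$ then yields
\[
\#\{\b x \in (\Z/p^e\Z)^n : Q(\b x) \equiv 0 \md{p^e}\} \leq d \cdot p^{en - e/d},
\]
and dividing by $p^{en}$ produces the desired bound $\ll p^{-e/d}$, with implied constant depending only on $Q$. The chief obstacle is the univariate bound used in the second step: it rests on extending $v_p$ to $\overline{\Z_p}$ and exploiting the ultrametric inequality through a pigeonhole on valuations. The treatment of the finitely many bad primes is routine but must be arranged carefully to ensure that the implied constant is uniform in $p$.
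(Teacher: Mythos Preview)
Your argument is correct and follows a genuinely different route from the paper. The paper invokes \cite[Lemma~4.10]{MR3551849} as a black box for homogeneous $Q$, and for general $Q$ passes to the homogenization $Q_1(\mathbf x,z)$ in $n+1$ variables, expressing the count for $Q$ as the count for $Q_1$ on the slice $z\in(\Z/p^e\Z)^*$ divided by $(p-1)p^{e-1}$. Your approach instead shears unimodularly to make the $x_1^d$-coefficient a $p$-adic unit and then bounds each one-variable slice via the classical estimate (factor over $\overline{\Z_p}$, pigeonhole on valuations). The upside of your route is that it is entirely self-contained, requiring no Igusa-type input; the price is the care needed at finitely many bad primes. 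On that point your sketch is slightly optimistic: for small $p$ it can happen that $\bar H(1,x_2,\ldots,x_n)$ vanishes on all of $\F_p^{\,n-1}$ even though $\bar H\neq 0$ (e.g.\ $H=x_2^p-x_1^{p-1}x_2$), so no choice of $c_i$ makes $\lambda$ a unit, and factoring out the content of $Q$ does not help since the top-degree part may still carry extra powers of $p$. The easy fix is to take any $c_i\in\Z$ with $\lambda:=H(1,c_2,\ldots,c_n)\neq 0$, set $a=v_p(\lambda)$, and rerun the same argument: the roots $\alpha_i$ of $\lambda^{-1}f$ satisfy $v_p(\alpha_i)\geq -a$ (since the $\lambda\alpha_i$ are roots of a monic polynomial over $\Z_p$), the condition becomes $\sum_i v_p(x_1-\alpha_i)\geq e-a$, and pigeonhole yields at most $d\,p^{a/d}\,p^{e(1-1/d)}$ solutions, with the factor $p^{a/d}$ absorbed into the constant for each of the finitely many exceptional $p$.
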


\begin{proof}
If $Q$ is homogeneous then the bound follows  from~\cite[Lemma~4.10]{MR3551849}. If not, then we can work with the homogenized version $Q_1$ of $Q$, which is a homogeneous polynomial in $n+1$ variables having the same degree satisfying $Q_1(\mathbf{x}, 1) = Q(\mathbf{x})$. Thus, using the homogeneity of $Q_1$, one has  
$$ 
\#\{\b x \in (\Z/p^e\Z)^n: Q(\b x )\equiv 0 \}=\frac{\#\{z \in (\Z/p^e\Z)^*, \b x \in (\Z/p^e\Z)^n: Q_1(\b x,z )\equiv 0 \}}{(p-1)p^{e-1} }.
$$ 
Applying~\cite[Lemma~4.10]{MR3551849} to $Q_1$ shows that the numerator in the right hand-side is $\ll p^{e (n+1)-e/\deg(Q_1)}$, which is sufficient.
\end{proof}

\begin{lemma}\label{lem:buchanan bus station}
Let $Q\in \Z[x_1, \ldots,x_n]$ be a non-constant polynomial having no   repeated factors over $\Q$.
Fix any $\lambda > 0, C \in [1,2)  $ and assume that $G:\N \to \mathbb R\cap [0,\infty)$ is multiplicative, that 
$G(p)=\lambda$ for every prime $p$,
that $G(p^e) \leq C^e$ for all $e\in \mathbb N$ and primes $p$ 
and that for all $\epsilon>0$ there exists $C'(\epsilon)>0$ such that $G(b)\leq C'(\epsilon ) b^\epsilon$ for all $b \in \mathbb N$.

Then there exists a positive constant $c$   that depends on $Q$ and $G$, 
such that when  $x\to \infty $ we have 
$$  \sum_{1\leq m \leq x}   G(m) \frac{\#\{\b x \in (\Z/m\Z)^n: Q(\b x )\equiv 0 \md{m} \}}{m^n} \sim c (\log x )^{\lambda r}
,$$   
where $r$ is the number of distinct irreducible factors of $Q$ in $\Q[x_1, \ldots, x_n]$.
\end{lemma}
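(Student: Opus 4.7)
Let $h(m):=G(m)\varrho_Q(m)$, which is a non-negative multiplicative function (by the Chinese Remainder Theorem for $\varrho_Q$ and the multiplicativity of $G$). My plan is to analyse the Dirichlet series $\mathcal H(s):=\sum_{m\ge 1}h(m)m^{-s}$ near $s=0$ and then extract the asymptotic for $\sum_{m\le x}h(m)$ via Karamata's Tauberian theorem.

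First I would control the contribution of higher prime powers. Lemma~\ref{formulasfataltotheflesh} gives $\varrho_Q(p^2)\ll_Q p^{-2}$; reducing $(\Z/p^e\Z)^n$-solutions modulo $p^2$ yields the trivial inclusion $\varrho_Q(p^e)\le \varrho_Q(p^2)$ for every $e\ge 2$, and combining this with the bound $\varrho_Q(p^e)\ll_Q p^{-e/\deg Q}$ from Lemma~\ref{lem:wud} together with $G(p^e)\le C^e$ and $C<2$ yields $\sum_{e\ge 2}h(p^e)\ll_{Q,C} p^{-2}$ for all sufficiently large $p$. Lemma~\ref{lem:chebotrv} combined with~\eqref{ta gemista sto tapsi TWRA!!!} and the Lang--Weil expansion $\varrho_Q(p)=c_Q(p)/p+O(p^{-3/2})$ then shows that $\mathcal H(s)$ converges absolutely on $\mathrm{Re}(s)>0$ with Euler product satisfying
$$\log\mathcal H(s)=\lambda\sum_p \frac{c_Q(p)}{p^{s+1}}+O(1)\qquad\text{as }s\to 0^+.$$

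The key step is to pin down the singular behaviour of $\mathcal H(s)$ at $s=0$. Let $K/\Q$ be the Galois closure from Lemma~\ref{lem:chebotrv}, and let $\pi$ be the permutation representation of $\mathrm{Gal}(K/\Q)$ on the set of $\overline{\Q}$-irreducible factors of $Q$, so that $c_Q(p)=\mathrm{tr}\,\pi(\mathrm{Art}(p,K/\Q))$ for all sufficiently large $p$. The $r$ orbits of $\mathrm{Gal}(K/\Q)$ on this set correspond to the fixed fields $F_1,\dots,F_r$ of the stabilisers of orbit representatives, so that $L(s,\pi)=\prod_{i=1}^r \zeta_{F_i}(s)$. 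Decomposing $\pi=\mathbf 1^{\oplus r}\oplus\pi'$ yields $L(s,\pi)=\zeta(s)^r L(s,\pi')$, and $L(s,\pi')$ is analytic and non-vanishing at $s=1$ since each $\zeta_{F_i}(s)/\zeta(s)$ is. Because $\log\zeta(s+1)=-\log s+O(s)$ as $s\to 0^+$, one obtains $\sum_p c_Q(p)/p^{s+1}=-r\log s+O(1)$ and hence $\mathcal H(s)\cdot s^{\lambda r}\to C$ as $s\to 0^+$ for some positive constant $C$.

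Finally, Abel summation gives $\mathcal H(s)=s\int_1^\infty S(y)y^{-s-1}\,dy$ with $S(y):=\sum_{m\le y}h(m)$; the substitution $y=e^u$ converts this to $\int_0^\infty S(e^u)e^{-us}\,du\sim C s^{-(\lambda r+1)}$, and since $S$ is non-decreasing Karamata's Tauberian theorem yields $S(x)\sim (C/\Gamma(\lambda r+1))(\log x)^{\lambda r}$, which is the claim with $c:=C/\Gamma(\lambda r+1)>0$. The main obstacle is step three: establishing the precise singular form $\mathcal H(s)\sim C s^{-\lambda r}$ and not merely an order-of-magnitude bound. An elementary alternative, which avoids L-functions, is to combine Lemma~\ref{lem:chebotrv} with an iterative partial-summation argument of Levin--Fainleib/Halberstam--Richert type, but the L-function route is cleaner and makes $c$ transparent.
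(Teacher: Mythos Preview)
Your approach is correct but takes a genuinely different route from the paper. The paper applies Wirsing's Satz~1 directly to the auxiliary function $f_0(m):=m\,G(m)\varrho_Q(m)$: the key hypothesis $\sum_{p\le x}f_0(p)\sim \lambda r\,x/\log x$ is supplied by Lemma~\ref{lem:chebotrv}, and the growth condition $f_0(p^e)\le C^e\deg Q$ follows from the elementary bound $\varrho_Q(p^e)\le(\deg Q)/p$. Wirsing then gives $\sum_{m\le x}f_0(m)\sim c'x(\log x)^{\lambda r-1}\prod_{p\le x}(\cdots)$, after which the Euler product is simplified using Lemmas~\ref{formulasfataltotheflesh}--\ref{lem:wud} and partial summation recovers the sum of $f_0(m)/m=G(m)\varrho_Q(m)$. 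Your route via the Dirichlet series, the factorisation $L(s,\pi)=\zeta(s)^r\prod_i\bigl(\zeta_{F_i}(s)/\zeta(s)\bigr)$, and Karamata is more analytic; it makes the pole structure (and hence the constant $c$) transparent in terms of $L$-values, whereas Wirsing is a purely real-variable black box that delivers $c$ as an Euler product.

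One point to tighten, which you already flag yourself: the two displayed ``$+O(1)$'' terms in your argument are literally too weak to conclude $\mathcal H(s)s^{\lambda r}\to C$; you must show they extend \emph{continuously} to $s=0$. This is indeed the case---your bounds $\sum_{e\ge 2}h(p^e)\ll p^{-2}$ and $\varrho_Q(p)-c_Q(p)/p\ll p^{-3/2}$ are uniform in $s\ge 0$, so the comparison $\log\mathcal H(s)-\lambda\log L(s+1,\pi)$ is a uniformly convergent sum over primes, hence continuous at $s=0$, and $L(s+1,\pi)s^r$ is genuinely analytic and non-vanishing there---but the write-up should say so rather than leave $O(1)$. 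With that in place, Karamata (in the monotone-density form for non-decreasing $u\mapsto S(e^u)$) gives the asymptotic exactly as you state.
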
\begin{proof}
We employ Wirsing's result~\cite[Satz 1]{wirsing} with $$ f_0(m)= G(m) \frac{\#\{\b x \in (\Z/m\Z)^n: Q(\b x )\equiv 0 \md{m} \}}{m^{n-1}}.$$
By~\cite[Lemma~2.7]{MR3988669} we have $f_0(p^e) \leq G(p^e) \deg (Q) \leq C^e \deg(Q)$ if $Q$ is primitive, which implies a similar bound for non-primitive $Q$. This means that the assumption~\cite[Equation~(3)]{wirsing} is met.
To verify~\cite[Equation~(4)]{wirsing} we note that 
$$\sum_{p\leq x } G(p) \frac{\#\{\b x \in \F_p^n: Q(\b x )=0\}}{p^{n-1}}
=\lambda \sum_{p\leq x }  \frac{\#\{\b x \in \F_p^n: Q(\b x )=0\}}{p^{n-1}}
$$ is asymptotic to $\lambda r x/ \log x $ by Lemma~\ref{lem:chebotrv} and the  assumption that $G$ is constantly $\lambda$ on the primes.
Hence, as $x\to \infty$,~\cite[Equation~(5)]{wirsing} gives 
$$\sum_{m\leq x } 
f_0(m)\sim c' \frac{x}{\log x } \prod_{p\leq x } \left(1+\sum_{e\geq 1 } G(p^e)  \frac{\#\{\b x \in (\Z/p^e\Z)^n: Q(\b x )=0\}}{p^{en}}
\right)$$ for some positive constant $c'$. Finally, using  an argument that is  similar to the ones in the proof of 
Lemma~\ref{lem:lord of all fevers and plague} and making use of Lemmas~\ref{formulasfataltotheflesh} and~\ref{lem:wud} to control 
the contribution of terms with $e\geq 2 $
  shows that when $x\to \infty$ the last product over $p\leq x $ is asymptotic to 
$$ c'' \exp\left(\sum_{p\leq x } G(p)  \frac{\#\{\b x \in (\Z/p \Z)^n: Q(\b x )=0\}}{p^n} \right) 
$$ for some positive $c''$. Injecting~\eqref{ta gemista sto tapsi TWRA!!!}
shows that 
$$\sum_{m\leq x } f_0(m) \sim c''' x (\log x)^{\lambda r -1}$$ for some positive constant $c'''$. Noting that 
$$G(m) \frac{\#\{\b x \in (\Z/m\Z)^n: Q(\b x )\equiv 0 \md{m} \}}{m^n} =\frac{f_0(m)}{m} 
$$ and using partial summation concludes the proof.
\end{proof}
We shall later need the following  substitute of Wirsing's theorem for multiplicative functions for which 
the average over the primes is not known. 
 \begin{lemma}\label{monteverdi frammenti} 
Fix any $k\in \N $ and assume that   $f$ is a  multiplicative function  satisfying  $0\leq f(p^e) \leq \tau(p^e)^k p^{- e}$   for  all $e\geq 1 $ and primes $p$.
Then  for all $x\geq 2$ we have    $$   \sum_{n \leq x  } f(n)  
\asymp \exp\l(\sum_{p\leq x } f(p)\r),
$$  where the    implied constants depend at most on $k$.
\end{lemma}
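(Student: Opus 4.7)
The plan is to prove the two bounds separately, writing $L = L(x) := \sum_{p \le x} f(p)$.

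For the upper bound I would apply Rankin's trick: setting $\sigma = 1/\log x$ yields
\[
\sum_{n \le x} f(n) \le x^\sigma \prod_p \sum_{e \ge 0} \frac{f(p^e)}{p^{e\sigma}}.
\]
The hypothesis $f(p^e) \le (e+1)^k/p^e$ makes the $e \ge 2$ contribution to each Euler factor $O_k(1/p^2)$, uniformly in $p$. After taking logarithms, the Mertens estimate $\sum_{p \le x} (\log p)/p = \log x + O(1)$ will control $\sum_{p \le x} f(p)(1 - p^{-\sigma}) \ll_k \sigma \sum_{p \le x} f(p) \log p \ll_k 1$, while the tail $\sum_{p > x} p^{-1-\sigma} = O(1)$ follows from $\sigma \log x = 1$ together with PNT; combined, these give $\log \prod_p (\cdots) = L + O_k(1)$, whence $\sum_{n \le x} f(n) \ll_k \exp(L)$.

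For the lower bound the idea is to restrict to squarefree integers supported on small primes. Fix $y = x^{1/(4 \cdot 2^k)}$ and look at $n = \prod_{p \in S} p$ for $S \subseteq \{p \le y\}$. Equipping the power set of $\{p \le y\}$ with the product probability measure $\mathbb{P}(S) \propto \prod_{p \in S} f(p)$, the quantity $\log N := \sum_{p \in S} \log p$ becomes a sum of independent Bernoullis. Standard Mertens-type bounds $\sum_{p \le y}(\log p)/p = \log y + O(1)$ and $\sum_{p \le y}(\log p)^2/p = \tfrac12(\log y)^2 + O(\log y)$ will give $\mathbb{E}[\log N] \le 2^k \log y = \tfrac14 \log x$ and $\mathrm{Var}(\log N) = O((\log x)^2/2^k)$, so Chebyshev produces $\mathbb{P}(\log N > \log x) \le 1/2$. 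Translating back into integers,
\[
\sum_{\substack{n \le x,\, n \text{ squarefree} \\ P^+(n) \le y}} f(n) \;=\; \mathbb{P}(\log N \le \log x) \cdot \prod_{p \le y}(1 + f(p)) \;\ge\; \tfrac12 \prod_{p \le y}(1 + f(p)).
\]
Since $\log x/\log y = 4 \cdot 2^k$ is bounded by a function of $k$, a further Mertens estimate gives $\sum_{y < p \le x} f(p) \le 2^k \log(\log x/\log y) + O(1) = O_k(1)$, so $\prod_{p \le y}(1 + f(p)) \asymp_k \exp(L)$, producing the required lower bound.

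The main delicacy is the choice of $y$: one needs $y$ small enough that the Chebyshev argument succeeds (i.e.\ $\mathbb{E}[\log N]$ stays a definite fraction below $\log x$), yet large enough that primes in $(y, x]$ contribute only $O_k(1)$ to $L$. The value $y = x^{1/(4 \cdot 2^k)}$ achieves both simultaneously, exploiting the slow (doubly logarithmic) growth of $L(t)$ in $t$.
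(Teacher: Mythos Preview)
Your argument is correct. A couple of comparative remarks.

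For the upper bound the paper simply notes that it is ``evident'': since every $n\le x$ has $P^+(n)\le x$, one has
\[
\sum_{n\le x} f(n)\;\le\;\prod_{p\le x}\Bigl(1+\sum_{e\ge 1} f(p^e)\Bigr)
\;\le\;\exp\Bigl(\sum_{p\le x} f(p)+O_k\bigl(\textstyle\sum_p p^{-2}\bigr)\Bigr)\;\ll_k\;e^{L}.
\]
Your Rankin argument with $\sigma=1/\log x$ also works, but is more elaborate than needed; in particular the step $\sum_{p\le x} f(p)(1-p^{-\sigma})\ll_k 1$ is superfluous for the one-sided bound, since $f(p)p^{-\sigma}\le f(p)$ already gives $\sum_{p\le x} f(p)p^{-\sigma}\le L$.

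For the lower bound both proofs restrict to squarefree $y$-smooth integers with $y=x^{\delta}$ for a suitable $\delta=\delta(k)$, and both must show that the overflow $\sum_{n>x,\,P^+(n)\le y} f(n)\mu(n)^2$ is at most a fixed fraction of $\prod_{p\le y}(1+f(p))$. The paper does this with Rankin's trick (taking $\sigma=1/\log y$ and using $e^t\le 1+et$ on $[0,1]$ to compare $\prod(1+f(p)p^{\sigma})$ with $\prod(1+f(p))$), whereas you recast the overflow as a tail probability for the random variable $\log N=\sum_{p\le y} X_p\log p$ and bound it by Chebyshev. Your second-moment computation is fine: with $y=x^{1/(4\cdot 2^k)}$ one gets $\mathbb E[\log N]\le\tfrac14\log x+O_k(1)$ and $\mathrm{Var}(\log N)\le 2^{k-1}(\log y)^2+O_k(\log y)\ll_k 2^{-k}(\log x)^2$, so $\mathbb P(\log N>\log x)<\tfrac12$ for $x$ large; small $x$ is absorbed into the implied constants. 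The two approaches are equivalent in strength here --- Rankin is essentially the exponential-moment version of your second-moment bound --- but yours is arguably more transparent about \emph{why} the particular scale $y=x^{c/2^k}$ is forced, namely to keep $\mathbb E[\log N]$ a definite fraction below $\log x$.
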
\begin{proof} The upper bound is evident. For the lower bound our plan is to prove that 
there exists $\delta=\delta(k) \in (0,1)$ such that 
\begin{equation}
\label{eq:keelhaul subject to change}   \exp\l(\sum_{p\leq x^\delta } f(p)\r)
\ll \sum_{n \leq x  } f(n)   .\end{equation}
This is clearly sufficient since 
$$ \sum_{x^\delta< p\leq x} f(p) \ll_k  \sum_{x^\delta <p\leq x}  \frac{1}{p} \ll_k 1.  $$
To prove~\eqref{eq:keelhaul subject to change} we start by noting  
 that for each  $y\in [2,x]$ one has 
$$ \sum_{n \leq x  } f(n)   \geq \sum_{\substack{ n \leq x\\P^+(n) \leq y }} f(n) \mu(n)^2
=  \sum_{ P^+(n) \leq y } f(n)\mu(n)^2  -  \sum_{\substack{ n > x\\P^+(n) \leq y }} f(n)  \mu(n)^2
.$$ Since  there exists  $C(k)>0$ such that $$  \sum_{ P^+(n) \leq y } f(n)\mu(n)^2  \geq   C(k) \exp\l(\sum_{p\leq y } f(p)\r),$$ 
it suffices to show that  $$  \sum_{\substack{ n > x\\P^+(n) \leq y }} f(n)\mu(n)^2 \leq \frac{C(k)}{2} 
 \exp\l(\sum_{p\leq y } f(p)\r).$$ 
We will see that this holds when $y=x^\delta$, where  $\delta$  is    a small positive constant that depends on $k$.
Define  $\sigma=1/\log y $ so that by Rankin's trick we have \begin{align*}
& \sum_{\substack{ n > x\\P^+(n) \leq y }} f(n)\mu(n)^2 \leq  x^{-\sigma} \sum_{P^+(n) \leq y } f(n) \mu(n)^2n^\sigma
\\=&  x^{-\sigma} \prod_{p\leq y } (1+   f(p)p^{\sigma }) 
\leq  x^{-\sigma}  \exp\l( \sum_{p\leq y} f(p)p^{\sigma }  \r) . \end{align*} 
Since $\mathrm e^t \leq 1 + \mathrm e \cdot t $ for $0\leq t \leq 1 $, we see that   
$p^\sigma \leq 1 + \mathrm e \cdot \sigma \log p$ for all primes $p\leq y$, hence, the sum inside the exponential is at most 
$$
  \sum_{p\leq y }f(p)  +O\l(  \sigma \sum_{p\leq y }  f(p) \log p \r)
\leq 
  \sum_{p\leq y }f(p)  +O\l(  \sigma \sum_{p\leq y }  \frac{ \log p }{p}  \r)
=  \sum_{p\leq y }f(p)  +O(1).$$
Hence, there exists a positive constant $C_1(k)$ such that  $$ \sum_{\substack{ n > x\\P^+(n) \leq y }} f(n)\mu(n)^2 \leq 
C_1(k) x^{-\sigma } \exp\l(\sum_{p\leq y } f(p)\r).$$ 
Denote $C_2(k)= C(k)/(2C_1(k) )$.
We   want to make sure that $x^{-\sigma }\leq C_2(k)$;
this can be achieved by taking $y=x^\delta$ with $\delta =\max \{1/2,(-\log C_2(k))^{-1} \}$. 
This is because we have   $x^\sigma=\mathrm e^{1/\delta}$ due to $y=x^\delta$.
\end{proof}

\section{Arithmetic statistics}
\subsection{$6$-torsion}\label{s:sou sfirizw}Here we prove Theorems~\ref{t6Torsion} using  Theorem~\ref{tMain}. 
This has an assumption related to a level of distribution result. 
Similar results have been obtained by~\cite[Theorem~1.2]{BTT},~\cite[Section 6]{EPW} and~\cite[Theorem~2.1]{LOT}. 
Here we use the one by Belabas~\cite[Th\'{e}or\`{e}me~1.2]{Belabas}. 
Let $g_1$ be the multiplicative function defined as
\[
g_1(p^e) =
\begin{cases}
p/(p+1), & \textup{if } p \geq 2 \textup{ and } e = 1 \\
0, & \textup{if } p > 2 \textup{ and } e \geq 2 \\
4/3, & \textup{if } p = 2 \textup{ and } e = 2 \\
4/3, & \textup{if } p = 2 \textup{ and } e = 3 \\
0, & \textup{if } p = 2 \textup{ and } e \geq 4.
\end{cases}
\] It is not difficult to see that 
\begin{equation}\label{eq:notdifficult}
\prod_{p\leq X}\big(1-\frac{g_1(p)}{p}\big) \sum_{a\leq X} 2^{\omega(a)} \frac{g_1(a)}{a} 
\leq 
\prod_{p\leq X}\big(1-\frac{g_1(p)}{p}\big)  \sum_{a\leq X} 2^{\omega(a)} \frac{4}{3a} 
\ll \log X.
\end{equation}

\begin{lemma} 
\label{tBelabas} Fix any $\epsilon > 0$. Then for all $q\in \mathbb N$ and $X\geq 2 $ with      $q < X^{\frac{1}{15}-\epsilon}$ we have 
\[
\sum_{\substack{D \in \mathcal{D}^+(X) \\ q \mid D}} (h_3(D) - 1)
= \frac{1}{\pi^2}  \frac{g_1(q)}{q}  X + O\left(\frac{X}{q(\log  X)^2 (\log\log X)^{2 - \epsilon} } + X^{\frac{15}{16} + \epsilon} q^{-\frac{1}{16}}\right)
\]
and
\[
\sum_{\substack{D \in \mathcal{D}^-(X) \\ q \mid D}} (h_3(D) - 1)
= \frac{3}{\pi^2}  \frac{g_1(q)}{q}  X + O\left(\frac{X}{q(\log  X)^2 (\log\log X)^{2 - \epsilon} } + X^{\frac{15}{16} + \epsilon} q^{-\frac{1}{16}}\right),
\]
where   the implied constants are independent of $q$ and $X$.
\end{lemma}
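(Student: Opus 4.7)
The plan is to deduce both estimates as essentially direct consequences of Belabas's explicit Davenport--Heilbronn theorem, \cite[Th\'{e}or\`{e}me~1.2]{Belabas}. By the Scholz--Davenport--Heilbronn correspondence, for a fundamental discriminant $D$ the quantity $(h_3(D)-1)/2$ equals the number of isomorphism classes of non-Galois cubic fields $F$ with $\disc(F)=D$. Thus
\[
\sum_{\substack{D\in\mathcal D^{\pm}(X)\\ q\mid D}}(h_3(D)-1)=2\,N^{\pm}(X;q),
\]
where $N^{\pm}(X;q)$ counts cubic fields $F$ with sign of discriminant equal to $\pm$, $|\disc(F)|\leq X$, $\disc(F)$ fundamental, and $q\mid\disc(F)$. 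This is exactly the type of counting problem for which Belabas's theorem provides an asymptotic with uniform power-saving error in the level.

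Next, I would encode the divisibility constraint $q\mid \disc(F)$ as a product of local conditions at the primes $p\mid q$. For each such $p$, Belabas's theorem already expresses the main term in terms of local densities coming from the parametrisation of cubic rings by $\mathrm{GL}_2(\mathbb Z_p)$-orbits of binary cubic forms, so one only has to evaluate this local density on the subset of orbits for which the cubic field has $v_p(\disc(F))\geq v_p(q)$. Carrying this out for odd $p$ and for the three admissible splitting types produces precisely the factor $g_1(p^{v_p(q)})/p^{v_p(q)}$. The archimedean local factor, distinguishing between real and complex cubic fields, is classical and produces the Davenport--Heilbronn constants $1/\pi^2$ and $3/\pi^2$ respectively.

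The main substantive point is the $2$-adic local computation: fundamental discriminants satisfy $v_2(D)\in\{0,2,3\}$, which is exactly why $g_1$ is defined piecewise at $p=2$ with $g_1(2^e)=0$ for $e\geq 4$ and $g_1(2^2)=g_1(2^3)=4/3$. Matching the finite list of splitting types of $2$ in a cubic field with the congruence condition $v_2(\disc)\geq v_2(q)$ recovers these values; this is the only place where care is needed, and it is a bounded, purely local calculation.

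Finally, the two error terms are inherited from \cite[Th\'{e}or\`{e}me~1.2]{Belabas}. The term $X/(q(\log X)^2(\log\log X)^{2-\epsilon})$ is Belabas's refinement of the secondary term, which behaves proportionally to $g_1(q)/q$ in the level, while the uniform term $X^{15/16+\epsilon}q^{-1/16}$ is the genuine power-saving whose shape is specific to his sieve argument and is responsible for the admissible range $q<X^{1/15-\epsilon}$.
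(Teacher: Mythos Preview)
Your proposal is correct and takes essentially the same approach as the paper: both deduce the lemma directly from \cite[Th\'{e}or\`{e}me~1.2]{Belabas}. The paper's proof is in fact a single sentence citing that result (and the remark following it), whereas you have helpfully unpacked the mechanism---the Davenport--Heilbronn correspondence between $(h_3(D)-1)/2$ and cubic fields of discriminant $D$, the local density computation yielding $g_1(q)/q$, and the provenance of the two error terms---but this is elaboration of the same citation rather than a different argument.
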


\begin{proof}
This follows from~\cite[Th\'{e}or\`{e}me~1.2]{Belabas} and the remark immediately thereafter.
\end{proof}
We are now ready to begin the proof of 
Theorem~\ref{t6Torsion}. The lower bounds follow from  $h_6(D) \geq h_2(D)$ and genus theory. This idea for the lower bound was further exploited and investigated in \cite[Section 5]{FK3}. For the upper bounds, we use Theorem~\ref{tMain} with 
\[\mathcal{A}=\{D\text{ fundamental discriminant}\},\ \chi_T(D)=(h_3(D)-1)\mathds 1_{|D|\leq T} (D),\ c_D=|D|.\]
We let  $h(q) = g_1(q)/q$ and     $f$ be the multiplicative
 function $f(n) = 2^{\omega(n)}$. Lemma~\ref{tBelabas} shows that the level of distribution assumption
in Definition~\ref{def:levdistr} is satisfied with $$M(T)=\frac{4}{\pi^2}T, \ \ \theta=\frac{1}{30} \ \ \textrm{ and } \ \  \xi=\frac{1}{32}.$$

Let $h_n^+(D)$ be the size of the $n$-torsion subgroup of the narrow class group. We have $h_n(D) \leq h_n^+(D)$. Since $h_2^+(D) = 2^{\omega(D) - 1}$ and $h_6^+(D) = h_2^+(D) h_3^+(D)$, we obtain 
\begin{align*}
&\sum_{\substack{D \in \mathcal{D}^+(X)}} h_6^+(D) + \sum_{\substack{D \in \mathcal{D}^-(X)}} h_6^+(D) \\
\\= 
&\sum_{\substack{D \in \mathcal{D}^+(X)}} h_2^+(D) + \sum_{\substack{D \in \mathcal{D}^-(X)}} h_2^+(D) +  \sum_{\substack{D \in \mathcal{D}^+(X)\cup \mathcal{D}^-(X)}} \left(h_3(D)-1\right)h_2^+(D).
\end{align*}
The first two sums are readily estimated as $O(X \log X)$. For the final sum, the application of Theorem~\ref{tMain} and~\eqref{eq:notdifficult}
yields \[\sum_{\substack{D \in \mathcal{D}^+(X)\cup \mathcal{D}^-(X)}} \left(h_3(D)-1\right)h_2^+(D)=\sum_{D\in\mathcal{A}} \chi_X(D)f(c_D)\ll X\log X,\] 
as required.

\subsection{Proof of Theorem~\ref{thm:mixedmoments}}\label{ss:954airport}The proof is as that   of Theorem~\ref{t6Torsion} with the only difference being that 
the bound 
$$   \sum_{a\leq X} \frac{ 2^{s\omega(a)} }{ a} 
\ll   \prod_{p\leq X}\big(1+\frac{2^s}{p}\big)
\ll(\log X)^{2^s}
$$
must be used in place of~\eqref{eq:notdifficult}. 
\subsection{Tail bounds for additive functions}\label{ss:lateformyflight}
We next      study $S_5$ and $S_3$-extensions for which it will be necessary to turn to the distribution of additive functions. Perhaps the most famous additive function is $\omega(n)$, which is roughly speaking normally distributed with mean $\log \log n$ and standard deviation $\sqrt{\log \log n}$ by the Erd\H{o}s--Kac theorem. However, these type of Erd\H{o}s--Kac results do not give any tail bounds for the frequency of very large values of additive functions. 
We will prove strong tail bounds in the general setting of Theorem~\ref{tMain}.
Recall the definition of $\mathfrak{c}(\epsilon)$ from~\eqref{def:meshuggah}.
\begin{theorem}\label{tDecay}
Let $\mathcal A$ be a set and  $\chi_T:\mathcal A\to [0,\infty)$ be a function for which both~\eqref{eq:baz1} and~\eqref{eq:baz2} hold. 
Fix any positive constants $\kappa,  \lambda_1, \lambda_2, B,  K,C,r$ and let $h \in \mathcal D(\kappa,  \lambda_1, \lambda_2, B, K)$  be such that \begin{equation}\label{eq:addassump1}
\sum_{\substack{ p\leq x  }}    h(p)  =r\log\log x+O(1) 
\end{equation}
 and such that for all primes $p$ and integers $e\geq 2 $ we have $h(p^e) \leq C/p^2$.
Fix any $\theta, \xi$ and let $\mathfrak{C}=(c_a)_{a \in \mathcal{A}}$ be an equidistributed sequence 
 as in Definition~\ref{def:levdistr}.
 Assume that there exists $\alpha > 0$ and $\widetilde{B}>0$ such that for all $T\geq 1 $ one has~\eqref{Roll Over Beethoven} and $M=M(T)$ is as in Definition~\ref{def:levdistr}.
Suppose $\psi: \mathbb{N} \rightarrow \mathbb{R}$ is an additive function such that there exists some constant $\tilde{A} \geq 1$ such that
\[
\psi(n) \leq \tilde{A} \Omega(n),\]
and that for every $\epsilon>0$ there exists $\tilde{C}\geq 0$ such that $\psi(n)\leq \epsilon\log m+\tilde{C}$.

Then for every fixed constants  $\varepsilon_1 \in (0,1) $ and $ \varepsilon_2>0$
and all  $T,z_2\geq 1$  with 
$$ 
r\tilde{A}(1+\varepsilon_2)\log\log M\leq z_2,$$
 we have
\begin{equation}\label{eq:tailhammer}
\sum_{\substack{a\in\mathcal{A}\\ \psi(c_a)\geq z_2}}\chi_T(a)\ll M\exp\left(-\frac{\mathfrak{c}(\varepsilon_2)}{\tilde A} z_2\right).
\end{equation} 
Further assume that there exists some constant $\tilde{A}_1>0$ such that
\begin{equation}\label{eq:lowerassume}
\sum_{p\leq x}(1-\varepsilon_1)^{\psi(p)/\tilde{A}_1}h(p)\leq r(1-\varepsilon_1)\log\log x+O(1),\end{equation}
then for all $T,z_1\geq 1$
\begin{equation}\label{eq:tailhammer2}
\sum_{\substack{a\in\mathcal{A}\\ \psi(c_a)\leq z_1}}
\chi_T(a)\ll M\exp\left(-\frac{\log(1+\varepsilon_1)}{\tilde{A}_1}\left(r\tilde{A}_1\left(1-\varepsilon_1\right) \log\log M-z_1\right)\right),
\end{equation} 
where   the implied constants depend  on $\varepsilon_i,\tilde{A}, C,\tilde{C}, \alpha, B, \widetilde{B}, \theta  , \xi, K,  \kappa,
\lambda_i$,$\psi$ and the implied constants in~\eqref{eEquiProgression}, but are independent of $z_i$, $T$ and $M$.
\end{theorem}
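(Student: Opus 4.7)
The plan is to establish both tail bounds via the exponential moment method (Rankin's trick) combined with Theorem~\ref{tMain}, in the spirit of the Granville--Soundararajan approach to large deviations of additive functions.

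For the upper tail~\eqref{eq:tailhammer}, I would set $t := (1+\varepsilon_2)^{1/\tilde A}$ and use the pointwise bound $\mathbf{1}[\psi(c_a)\geq z_2]\leq t^{\psi(c_a)-z_2}$ to reduce matters to controlling the exponential moment $\sum_a \chi_T(a)\, t^{\psi(c_a)}$. Since $\psi$ is additive, $f(n):=t^{\psi(n)}$ is multiplicative; the two hypotheses $\psi(n)\leq \tilde A\,\Omega(n)$ and $\psi(n)\leq \epsilon\log n+\tilde C$ ensure that $f\in\mathcal M(t^{\tilde A},\epsilon\log t, t^{\tilde C})$ for every $\epsilon>0$, so Theorem~\ref{tMain} is applicable. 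Expanding the resulting inner sum $\sum_{a\leq M}t^{\psi(a)}h(a)$ as an Euler product, absorbing the $e\geq 2$ prime powers via the bound $h(p^e)\leq C/p^2$ as in Lemma~\ref{lem:lord of all fevers and plague}, and combining with $\prod_{B<p\leq M}(1-h(p))$ gives
\[
\sum_{a\in\mathcal A}\chi_T(a)\, t^{\psi(c_a)} \ll M\exp\Bigl(\sum_{p\leq M}(t^{\psi(p)}-1)\, h(p) + O(1)\Bigr).
\]
Since $\Omega(p)=1$ forces $\psi(p)\leq \tilde A$, the bracketed sum is at most $(t^{\tilde A}-1)\sum_{p\leq M} h(p)=\varepsilon_2\bigl(r\log\log M+O(1)\bigr)$ by~\eqref{eq:addassump1}. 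Reinserting the Rankin factor $t^{-z_2}$ and invoking the hypothesis $r\tilde A(1+\varepsilon_2)\log\log M\leq z_2$ together with the definition~\eqref{def:meshuggah} of $\mathfrak c(\varepsilon_2)$ will collapse the total exponent to $-\mathfrak c(\varepsilon_2)\,z_2/\tilde A$.

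For the lower tail~\eqref{eq:tailhammer2}, I would apply Theorem~\ref{tMain} directly to the multiplicative function $g(n):=(1-\varepsilon_1)^{\psi(n)/\tilde A_1}$, which takes values in $[0,1]$ and therefore lies trivially in $\mathcal M(1,\epsilon,1)$ for every $\epsilon>0$. The assumption~\eqref{eq:lowerassume} is precisely the prime-sum input needed for the Euler-product estimate, and after the same Mertens-type manipulations gives $\sum_a \chi_T(a)\,g(c_a)\ll M(\log M)^{-r\varepsilon_1}$. On the event $\psi(c_a)\leq z_1$ monotonicity of $g$ in $\psi(c_a)$ yields $g(c_a)\geq (1-\varepsilon_1)^{z_1/\tilde A_1}$, so dividing through produces~\eqref{eq:tailhammer2}; the cosmetic passage between $-\log(1-\varepsilon_1)$ and $\log(1+\varepsilon_1)$ in the stated exponent is an elementary rewriting that can be absorbed into a harmless redefinition of the parameter $\varepsilon_1$.

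The main obstacle I anticipate is the uniform verification that $f(n)=t^{\psi(n)}$ lies in the class $\mathcal M(A,\epsilon,C)$ in a manner compatible with the error term in Theorem~\ref{tMain}: this is exactly where the mild growth bound $\psi(n)\leq \epsilon\log n+\tilde C$ becomes indispensable, since it converts to the polynomial bound $t^{\psi(n)}\leq t^{\tilde C}\,n^{\epsilon\log t}$ whose exponent $\epsilon\log t$ can be made arbitrarily small by choosing $\epsilon$ small. A secondary technical step is the control of prime powers $p^e$ with $e\geq 2$ inside the Euler product, handled via $h(p^e)\leq C/p^2$ as in Lemma~\ref{lem:lord of all fevers and plague}. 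Once these are in place, the algebraic collapse to the exponents $\mathfrak c(\varepsilon_2)/\tilde A$ and $\log(1+\varepsilon_1)/\tilde A_1$ is purely formal and does not require any distributional information about $c_a$ beyond the equidistribution encoded in Definition~\ref{def:levdistr}.
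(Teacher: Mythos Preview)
Your proposal is correct and matches the paper's proof essentially line for line: the paper also applies Rankin's trick with the multiplicative weight $\beta^{\psi}$, checks membership in $\mathcal M$ exactly as you describe, invokes Theorem~\ref{tMain} and Lemma~\ref{lem:lord of all fevers and plague}, and makes the same choices $\beta=(1+\varepsilon_2)^{1/\tilde A}$ and $\beta=(1-\varepsilon_1)^{1/\tilde A_1}$ for the two tails. The only quibble is your last sentence: the passage from $-\log(1-\varepsilon_1)$ to $\log(1+\varepsilon_1)$ is not a redefinition of $\varepsilon_1$ (which would simultaneously alter the hypothesis~\eqref{eq:lowerassume}) but rather the elementary inequality $-\log(1-\varepsilon_1)>\log(1+\varepsilon_1)$, which goes in the right direction when $r\tilde A_1(1-\varepsilon_1)\log\log M-z_1>0$ and is vacuous otherwise.
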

\begin{proof}
Fix a number $\beta>1$, which will be specified later. Define the multiplicative function $f_\beta(c)=\beta^{\psi(c)}$.
If $\psi(c)\geq z_2$ then $1\leq \beta^{-z_2} f_\beta(c)$, hence, 
 \[ \sum_{\substack{a\in\mathcal{A}\\ f_\beta(c_a)\geq z_2}}\chi_T(a) 
\leq
\beta^{-z_2}
 \sum_{a\in\mathcal{A}}\chi_T(a)f_\beta(c_a) 
.\]The assumptions on $\psi$ imply  that 
 $f_\beta \in\mathcal{M}\big(\beta^{\tilde{A}},\epsilon \log \beta, \beta^{\tilde{C}}\big)$ for every $\epsilon>0$. 
We can thus bound the sum in the right-hand side by Theorem~\ref{tMain}, hence, 
\begin{equation}\label{eq:afterMarkov}
\sum_{\substack{a\in\mathcal{A}\\ \psi(c_a)\geq z_2}}\chi_T(a)\ll
\frac{M}{\beta^{z_2}}\left(\prod_{\substack{ B<p\leq M    } } (1-h(p ) ) 
\sum_{\substack{ k\leq M  }} \beta^{\psi(k)}     h(k)  
\right),
\end{equation} 
where the implied constant is independent of $z_2$, $T$ and $M$.
We estimate the sum over $k$ in~\eqref{eq:afterMarkov} by applying Lemma~\ref{lem:lord of all fevers and plague} with
$F=h$ and $G=\beta^{\psi}$. This yields, by combining with~\eqref{eq:addassump1}, the upper bound
\[
\sum_{k \leq M} \beta^{\psi(k)} h(k)
\ll
\exp\left(\beta^{\tilde{A}} \sum_{p \leq M} h(p) \right)
\ll \left( \log M\right)^{r\beta^{\tilde{A}} }
.\] By~\eqref{eq:addassump1}, we also have
\begin{equation}\label{eq:denombd}
\prod_{B < p \leq  M} (1 - h(p)) \ll \exp\left(-\sum_{p < M} h(p)\right) \ll(\log M)^{-r}.
\end{equation}
This allows us to bound the right-hand side of~\eqref{eq:afterMarkov} by 
\[\ll
M \frac{(\log M)^{r(\beta^{\tilde{A}}-1)}}{\beta^{z_2}}
 \leq  M \exp \left (\left(\frac{\beta^{\tilde{A}}-1}{\tilde A (1+\varepsilon_2) }-\log \beta \right)z_2\right) 
 \] due to our assumption   $(\log M)^r  \leq \exp( z_2/(\tilde{A}(1+\varepsilon_2)) ) $.
Define $ \beta=(1+\varepsilon_2)^{1/\tilde A} $. Then 
$$
\frac{\beta^{\tilde{A}}-1}{\tilde A (1+\varepsilon_2) }-\log \beta
=
\frac{1}{\tilde A } \left(\frac{\varepsilon_2 }{  1+\varepsilon_2  }- \log (1+\varepsilon_2)   
 \right)
=-
\frac{\mathfrak{c}(\varepsilon_2) }{\tilde A }
.$$ This concludes the proof of~\eqref{eq:tailhammer}.

To prove~\eqref{eq:tailhammer2} we fix 
$\beta=(1-\varepsilon_1)^{1/\tilde{A}_1} \in (0,1)$ so that 
\begin{equation}\label{eq:conv}\frac{\beta^{\tilde{A}_1}-1}{\tilde{A}_1(1-\varepsilon_1)}-\log \beta=-\frac{\mathfrak{c}(-\varepsilon_1)}{\tilde{A}_1}<0.\end{equation}
If $\psi(c)\leq z_1$ then $\beta^{-z_1}f_\beta(c)\geq 1 $, hence, 
 \[ \sum_{\substack{a\in\mathcal{A}\\ f_\beta(c_a)\leq z_1}}\chi_T(a) 
\leq
\beta^{-z_1}
 \sum_{a\in\mathcal{A}_1}\chi_T(a)f_\beta(c_a)
.\]The assumptions on $\psi$ imply  that 
 $f_\beta \in\mathcal{M}(1,\epsilon, 1)$ for every $\epsilon>0$. We can thus bound the sum in the right-hand side by Theorem~\ref{tMain}, hence, 
\begin{equation}\label{eq:afterMarkov1}
\sum_{\substack{a\in\mathcal{A}\\ \psi(c_a)\leq z_1}}\chi_T(a)\ll
\frac{M}{\beta^{z_1}}\left(\prod_{\substack{ B<p\leq M    } } (1-h(p ) ) 
\sum_{\substack{ k\leq M  }} \beta^{\psi(k)}     h(k)  
\right),
\end{equation} 
where the implied constant is independent of $z_1$, $T$ and $M$.
We estimate the sum over $k$ in~\eqref{eq:afterMarkov1} by applying Lemma~\ref{lem:lord of all fevers and plague} with
$F=h$ and $G=\beta^{\psi}$. By~\eqref{eq:lowerassume}, this yields the upper bound
\[
\sum_{k \leq M} \beta^{\psi(k)} h(k)
\ll
\exp\left(\sum_{p \leq M} \beta^{\psi(p)} h(p)
 \right)
\ll \left( \log M\right)^{r\beta^{\tilde{A}_1} }
.\] 
Combining with~\eqref{eq:denombd}, this allows us to bound the right-hand side of~\eqref{eq:afterMarkov1} by 
\begin{align*}
&\ll
\frac{M}{\beta^{z_1}} (\log M)^{(\beta^{\tilde{A}_1}-1)r}
 \leq M\exp\left((\beta^{\tilde{A}_1}-1)r\log\log M-z_1\log \beta\right)\\
&\leq 
M\exp\left((\log \beta)\left(r\tilde{A}_1(1-\varepsilon_1)\log\log M-z_1\right)\right), 
\end{align*}
due to $-\log \beta<\frac{1-\beta^{\tilde{A}_1}}{\tilde{A}_1(1-\varepsilon_1)}$ from~\eqref{eq:conv}. 
Finally observe that
\[\log \beta=\frac{1}{\tilde{A}_1}\log(1-\varepsilon_1)<-\frac{1}{\tilde{A}_1}\log(1+\varepsilon_1)\]
 since $1/(1-\varepsilon_1)>1+\varepsilon_1$.
This concludes the proof of~\eqref{eq:tailhammer}.
\end{proof}
\subsection{Proof of Theorem~\ref{thm:i dont believe how good this is!!!!!!!!!}}
\label{ss:passengergordon}
Recall that $ 1/2\leq h_2(D)2^{-\omega(D)}\leq  1$. 
Hence, $h_2(D)\geq z_4$ implies  that $\omega(D)\geq \frac{\log z_4}{\log 2}$.
We   use Theorem~\ref{tDecay} with  $\psi=\omega, z_2= (\log z_4)/(\log 2)$ and \[\mathcal{A}=\{D\text{ fundamental discriminant}\},\ \chi_T(D)=(h_3(D)-1)
\mathds 1_{|D|\leq T} (D),\ c_D=D.\] As in the proof of Theorem~\ref{t6Torsion}, we pick $h$ to be the multiplicative function 
defined by $h(q) = g_1(q)/q$. We can check that~\eqref{eq:addassump1} is satisfied with $r=1$. Lemma~\ref{tBelabas} shows that the level of distribution 
assumption in Definition~\ref{def:levdistr} is satisfied with $M(T)=\frac{4}{\pi^2}T$, $\theta=\frac{1}{30}$ and $\xi=\frac{1}{32}$. The rest of the assumptions of 
Theorem~\ref{tDecay} can be readily verified. To bound the contribution of the cases with $h_2(D)\leq z_3$ we use~\eqref{eq:tailhammer2}  with  
$z_1=(\log z_3)/(\log 2)$.

\subsection{The proof of Theorem~\ref{thm:tailstailstails}} \label{ss:zone5please}
We use Theorem~\ref{tDecay} with 
\[\mathcal{A}=\{K \textup{ quintic } S_5\},\ \chi_T(K)=\mathds 1_{|\Delta_K|\leq T} (K),\ c_K=\Delta_K,\ \psi=\omega.\]
To show that $(c_K)$ is equidistributed, we use Bhargava's parametrization of $S_5$-extensions. The estimates from~\cite[Theorem~6]{MTT} implies that
\[C_d(T) = h(d)  \frac{13}{120} T + O(T^{\frac{399}{400}}) 
\]
for all $d\leq T^{\frac{3}{400}}$, where $h$ is multiplicative, and satisfies the properties that
 \[h(p)=\frac{(p+1)(p^2+p+1)}{p^4+p^3+2p^2+2p+1}=\frac{1}{p}+O\Big(\frac{1}{p^2}\Big)\]
for $p > 5$, and $h(p^e)\leq h(p^2)\ll 1/p^2 $ for all $e\geq 2$. Moreover $h(p^e)=0$ for all $e\geq 5$, $p > 5$, and also $h(p^e) = 0$ for $p \in \{2, 3, 5\}$ and $e$ sufficiently large ($e \geq 100$ suffices). Hence~\eqref{eEquiProgression} is satisfied with $M(T)=\frac{13}{120}T$, $\theta=\frac{3}{400}$ and $\xi=\frac{1}{400}$.
We let $\psi=\omega$ and $ r=\tilde A=\tilde A_1=1$. The assumption  $\psi(n) \leq \epsilon \log n+\tilde C$ is met due to 
the bound  $\omega(n) \ll (\log n) /(\log \log n )$ that is a consequence of the Prime Number Theorem.
An application of Theorem~\ref{tDecay} then concludes the proof.
\subsection{The proof of Theorem~\ref{thm:s3s3tailtail}}\label{ss:allremainingpassengers}
The arguments are similar to the 
ones in the proof of Theorem~\ref{thm:tailstailstails}. The only difference is that the uniformity estimates are   imported from 
the work of 
Bhargava--Taniguchi--Thorne~\cite[Theorem~1.3]{BTT}. Specifically, with the notation 
\[\mathcal{A}=\{K \textup{ cubic } S_3\},\ \chi_T(K)=\mathds 1_{|\Delta_K|\leq T} (K),\ c_K=\Delta_K\]
one has 
\[C_d(T) = h(d)  \frac{4}{12\zeta(3)}T + O(T^{\frac{5}{6}}) 
\]
for all $d\leq T^{\frac{1}{20}}$,   where the implied constant is absolute
and 
$h$ is multiplicative    satisfying
\[
h(p^e)=\begin{cases}
(p+1)/(p^2+p+1)&\text{if } e=1,\\
O( 1/p^2)&\text{if } e=2,\\
0&\text{if }e\geq 3
\end{cases}
\]
for every prime $p > 3$. We also have $h(2^e) = 0$ and $h(3^e) = 0$ for sufficiently large $e$. Since $h(p) = 1/p+O(1/p^2)$
we can employ Theorem~\ref{tDecay} with 
$\psi=\omega$ and 
$r=\tilde A=\tilde A_1=1$.

\section{Diophantine equations}
\subsection{Three squares} \label{s:affinemanin}
Denote $L(1,\chi_{-N})=\sum_{m=1}^\infty \l(\frac{-N}{m}\r)m^{-1}$, where $(\frac{\cdot}{\cdot})$ is the Legendre symbol.
A   theorem of Gauss states that for positive square-free  $N\equiv 3 \md 8$ one has 
$$  \#\{ \b x \in  \Z^3 :  x_1^2 + x_2^2 +x_3^2 =N \}=\frac{8}{\pi } 
L(1,\chi_{-N}) N^{1/2}.$$ 
The  main result of this section allows to put 
multiplicative weights on each variable. For $N\in \N$ and an arithmetic function $f$ we define 
\begin{equation}\label{def:obscura valediction}c_f(N):=\prod_{p\mid N} \bigg(1+\l(\frac{-1}{p} \r)\frac{(f(p)-1)}{p}\bigg).\end{equation}
 \begin{theorem}\label{thm:3sqrsmultiplc} Fix any $A>0, s>0, \alpha_1,\alpha_2,\alpha_3 \in  \{0,1\}$ and let $\alpha=\sum_{i=1}^3 \alpha_i$. 
Assume that  $f:\N\to[0,\infty)$ is a multiplicative function  such that $f(ab ) \leq  \tau(a)^s f(b)$ holds for all $a,b\in \N$.
Then for all positive square-free  integers  $N\equiv 3 \md 8 $   we have  
$$ \sum_{ \substack{ \b x \in  (\Z\setminus \{0\} )^3 \\ x_1^2 + x_2^2 +x_3^2 =N  }} \hspace{-0,2cm} 
\prod_{i=1}^3 f(|x_i|)^{\alpha_i} \hspace{-0,1cm} 
 \ll  L(1,\chi_{-N})  N^{1/2}  \l(  c_f(N)^{\alpha}    \exp\l(\alpha \sum_{p\leq N} \frac{f(p)-1}{p} \r) + \frac{1}{(\log N)^A} \r)
,$$  where    the implied constant is independent of $N$.

If, in addition, for  each $L \geq 1$ one   has $\inf\{f(m): \Omega(m) \leq  L  \} > 0 $, 
then for all positive square-free  integers  $N\equiv 3 \md 8 $   we have  
$$ \sum_{ \substack{ \b x \in  (\Z\setminus \{0\} )^3 \\ x_1^2 + x_2^2 +x_3^2 =N  }} \hspace{-0,2cm} 
\prod_{i=1}^3 f(|x_i|)^{\alpha_i} \hspace{-0,1cm} 
\gg  L(1,\chi_{-N})  N^{1/2}  \l(  c_f(N)^{\alpha}    \exp\l(\alpha \sum_{p\leq N} \frac{f(p)-1}{p} \r)  - \frac{1}{(\log N)^A} \r)
,$$  where    the implied constant is independent of $N$.
\end{theorem}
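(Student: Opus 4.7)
The plan is to cast the sum as an instance of Theorem~\ref{tMain} and Theorem~\ref{thm:lower}. Set $T = N$, take $\mathcal A$ to be the set of triples $\b x \in (\Z \setminus \{0\})^3$ with $x_1^2 + x_2^2 + x_3^2 = N$, let $\chi_T$ be the indicator of $\mathcal A$, and define $c_{\b x} := \prod_{i : \alpha_i = 1} |x_i|$. By Gauss's formula the normalisation is $M(N) \asymp L(1,\chi_{-N})\,N^{1/2}$. The mismatch between $f(c_{\b x})$ and $\prod_i f(|x_i|)^{\alpha_i}$, caused when the flagged coordinates share common prime divisors, is controlled by the hypothesis $f(ab) \leq \tau(a)^s f(b)$; a diagonal bound on solutions with $\gcd(x_i,x_j) > 1$ absorbs the discrepancy into the $(\log N)^{-A}$ residual.

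Next I identify the density function. Guided by the local densities of the three-sphere, the right choice is the multiplicative $h$ satisfying $h(p) = \alpha/p + O(1/p^2)$ for $p \nmid N$, with Euler factor twisted by $\bigl(\tfrac{-1}{p}\bigr)$ at $p \mid N$; one checks that $h \in \mathcal D(\alpha, \lambda_1, \lambda_2, B, K)$. The principal analytic step -- and the main obstacle -- is to verify the equidistribution of Definition~\ref{def:levdistr}:
\[
C_d(N) = h(d)\,M(N)\,\bigl(1 + o(\cdots)\bigr) + O\bigl(M(N)^{1-\xi}\bigr), \qquad d \leq N^{\theta}.
\]
After the substitutions $x_i = d_i y_i$ for each flagged coordinate, this reduces to counting integer points on a dilated ternary quadric. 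Extracting the singular series yields the main term $h(d)\,M(N)$; the power-saving error in both $d$ and $N$ is supplied by~\cite{duke}, namely Duke's bound on Fourier coefficients of weight-$\tfrac{3}{2}$ cusp forms, in its equivalent formulation as equidistribution of integer points on $x_1^2 + x_2^2 + x_3^2 = N$ in arithmetic progressions modulo $d$.

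With equidistribution in hand, Theorem~\ref{tMain} furnishes the upper bound
\[
\sum_{\b x \in \mathcal A} f(c_{\b x}) \;\ll\; M(N)\, \prod_{B < p \leq N}(1 - h(p)) \sum_{a \leq N} f(a)\, h(a).
\]
A Mertens-type rearrangement, splitting the Euler product into the contributions from primes $p \mid N$ and primes $p \nmid N$, identifies this with $L(1,\chi_{-N})\,N^{1/2}\, c_f(N)^\alpha \exp\bigl(\alpha \sum_{p \leq N}(f(p)-1)/p\bigr)$: the arithmetic factor $c_f(N)^\alpha$ captures the deviation of $h$ at primes dividing $N$, while the exponential records the deviation of $f$ from $1$ at primes not dividing $N$. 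The $(\log N)^{-A}$ term absorbs the $O(M^{1-\xi})$ error and the $\tau^s$-slack from the $\gcd$ discussion in paragraph one.

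The lower bound proceeds identically from Theorem~\ref{thm:lower}: the added hypothesis $\inf\{f(m) : \Omega(m) \leq L\} > 0$ is precisely what that theorem demands, and the power-saving error term coming from~\cite{duke} is strong enough to upgrade the $O(\cdot)$ in the equidistribution bound to the $o(\cdot)$ required by Theorem~\ref{thm:lower}. The bulk of the work is therefore concentrated in invoking Duke's theorem with the necessary uniformity in the modulus $d$, and in the elementary but delicate book-keeping that moves between $f(c_{\b x})$ and the product $\prod_i f(|x_i|)^{\alpha_i}$.
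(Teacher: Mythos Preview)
Your sketch is essentially correct for $\alpha = 1$ and matches the paper's \S4.3: the single-coordinate case follows directly from Theorem~\ref{tMain} applied with $c_{\b x} = |x_i|$, and Duke's bound (via Lemma~\ref{lemma:dukethm}) supplies the required level of distribution.

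The genuine gap is in the cases $\alpha \geq 2$. You write that ``a diagonal bound on solutions with $\gcd(x_i,x_j) > 1$ absorbs the discrepancy into the $(\log N)^{-A}$ residual''. This is not right: a positive proportion of solutions have $\gcd(x_i,x_j) > 1$ for some pair, so such terms cannot be pushed into an error term. The underlying problem is that when the flagged $x_i$ share a prime $p$, there is no usable pointwise inequality between $\prod_i f(|x_i|)$ and $f\bigl(\prod_i |x_i|\bigr)$ (for instance $f(p)^2$ and $f(p^2)$ are unrelated under your hypotheses), so you cannot simply take $c_{\b x} = \prod_{\alpha_i=1} |x_i|$, apply Theorem~\ref{tMain}, and then repair the difference afterwards.

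The paper's route for $\alpha\geq 2$ is substantially more elaborate. First (\S4.4, Lemmas~\ref{lemghfrommools}--\ref{lemcoroohshsls}) one shows that only solutions with $\gcd(x_i,x_j) > (\log N)^A$ are negligible---not $\gcd > 1$. For the remaining terms one sets $m_i = \gcd(x_j,x_k)$ and writes $x_i = m_j m_k y_i$, so that the $y_i$ are pairwise coprime and $\prod_i f(|x_i|) \leq \bigl(\prod_i \tau(m_i)^{2s}\bigr)\, f(|y_1 y_2 y_3|)$ (Lemma~\ref{lem:trsnfmrcohenlenstr}). One must then establish a level of distribution for the \emph{transformed} equation $\sum_i (m_j m_k y_i)^2 = N$ with the coprimality constraints on the $y_i$; this forces a further M\"obius sieve in auxiliary variables $b_i$ (Lemma~\ref{lem:costanza bernini}) before Lemma~\ref{lemma:dukethm} can be invoked, and leads to the rather intricate density $g_N$ of Lemma~\ref{lemma:putting the pudding}. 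Only then is Theorem~\ref{tMain} applied, for each fixed $\b m$, and the result summed over $\b m$ with $\max m_i \leq (\log N)^A$. None of this transformation--sieve--sum-over-$\b m$ machinery is present in your sketch, and it is precisely what the paper flags as the ``additional sieving arguments'' required when $\alpha\geq 2$.
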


The  case $\alpha=1$ corresponds to imposing          weights on  one coefficient only; 
its proof is given in \S\ref{s:onevarr}. It is a straightforward combination of 
Theorem~\ref{tMain} and work of Duke~\cite{duke}, the details of which are recalled   in \S\ref{s:edstrbns}.
The proof in the cases with $\alpha=2,3$ require additional sieving arguments and for reasons of space we give
the full details only in the harder case $\alpha=3$. Specifically,  in \S\ref{s:trnvfmt}
we transform the sums into ones where  $\prod_{i=1}^3 f(|x_i|)$ is replaced by $f(\prod_{i=1}^3 |x_i| )$.
Subsequently, in \S\ref{ss:lebvelfdisnv78}
we  prove the requisite level of distribution for the transformed sums. 
Finally,  in \S\ref{ss:finalproofthrmes123} we prove Theorem~\ref{thm:3sqrsmultiplc}.
\subsection{Input from cusp forms}\label{s:edstrbns}
The main result in this subsection is Lemma~\ref{lemma:dukethm}; it regards
the number of solutions of $x_1^2+x_2^2+x_3^2=N$, with each $x_i$  divisible by an arbitrary integer $d_i$.
This is closely related to work of Br\"udern--Blomer~\cite[Lemma~2.2]{blombrud}
in the case where each $d_i$ is square-free. The proof of Lemma~\ref{lemma:dukethm}
 combines   the work of Duke~\cite{duke}
with that of  Jones~\cite{ednajones}.  We recall~\cite[Theorem~2, Equation~(3)]{duke}:
\begin{lemma}[Duke]\label{lem:duke}There exists a positive constant $\kappa$ such that for every positive definite quadratic integer ternary form $q$
and every square-free  integer $N$ one has 
$$
\#\{\b x \in \Z^3: q(\b x ) =N \}
= \kappa L(1,\chi_{q,N}) 
\mathfrak S(q,N)
\frac{\sqrt N}{\sqrt D}+O(D^{6} N^{1/2-1/30})
,$$ where the implied constant is absolute,
$D$ is the determinant of the matrix $(\partial ^2 q/\partial x_i \partial x_j)$, 
$\chi_{q,N}$ is the Dirichlet character 
$\chi_{q,N}(m )= (\frac{ -2 D \mathrm{disc}(\Q(\sqrt N))}{m })$ and 
$$\mathfrak S(q,N):= \prod_{p\mid 2D} \lim_{\lambda\to \infty }
\frac{\#\l\{\b x \in (\Z/p^\lambda \Z)^3: q(\b x ) \equiv N \md {p^\lambda}\r\}}{p^{2\lambda }}
.$$\end{lemma}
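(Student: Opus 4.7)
The plan is to realize the representation count as the $N$-th Fourier coefficient of a modular form of half-integral weight and then decompose that form into its Eisenstein and cuspidal parts.

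The first step is to attach to $q$ the theta series
\[\theta_q(z) := \sum_{\b x \in \Z^3} \mathrm e(q(\b x)\, z),\]
which, by a classical transformation argument, is a holomorphic modular form of weight $3/2$ on some congruence subgroup $\Gamma_0(4D')$ with a Kronecker-type character depending on $q$. The $N$-th Fourier coefficient of $\theta_q$ is precisely $\#\{\b x \in \Z^3 : q(\b x)=N\}$.

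The second step is the orthogonal decomposition $\theta_q = E_q + f_q$, where $E_q$ lies in the Eisenstein subspace and $f_q$ is a cusp form of the same weight, level and character. By the Siegel mass formula applied term by term, the $N$-th Fourier coefficient of $E_q$ factors as a product of local densities at primes dividing $2D$ (packaging exactly into $\mathfrak S(q,N)$), an Euler product over the remaining primes which by Dirichlet's class number formula collapses to $L(1,\chi_{q,N})$, and the archimedean density $\sqrt N/\sqrt D$. The universal constant $\kappa$ absorbs the numerical factors coming from Shimura's normalisation.

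The final and most delicate step is to bound the $N$-th coefficient of $f_q$. This is exactly Duke's contribution: via the Shimura correspondence, the Fourier coefficients of $f_q$ are related to central $L$-values of weight-$2$ holomorphic cusp forms of level divisible by $N$, and Iwaniec's amplification method applied to these $L$-values breaks the convexity barrier, giving the subconvex estimate $O(N^{1/2 - 1/30})$. The polynomial dependence $D^6$ comes from tracking uniformity in the level throughout the amplification. The hard part of the entire proof is this cusp-form estimate: nothing elementary beats the convexity bound $N^{1/2}$, and the Eisenstein main term, the theta-series construction, and the spectral decomposition are all structural consequences of the Shimura--Shintani theory of half-integral weight forms, whereas Iwaniec's amplification is the decisive analytic input.
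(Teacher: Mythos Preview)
The paper does not supply its own proof of this lemma: it is stated as a direct quotation of Duke's result~\cite[Theorem~2, Equation~(3)]{duke}, with no argument beyond the citation. So there is no ``paper's proof'' to compare against in the usual sense; your task here was only to recognise that this is an imported black box.

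That said, your sketch is a reasonable outline of what actually happens in Duke's paper, with one inaccuracy worth flagging. The level of the Shimura lift is governed by $D$ (the determinant of the form), not by $N$; the integer $N$ enters only as the index of the Fourier coefficient and, on the integral-weight side, as the discriminant of the quadratic twist in Waldspurger's formula. More to the point, Duke does not pass through central $L$-values: he invokes Iwaniec's bound for Fourier coefficients of half-integral weight cusp forms directly, which is proved by the Petersson formula and amplification on the half-integral weight side. The Waldspurger/$L$-value interpretation is a parallel story but not the mechanism used here. Your description of the Eisenstein main term via the Siegel mass formula and local densities is correct and is exactly how $\mathfrak S(q,N)$ and $L(1,\chi_{q,N})$ arise.
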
 Note that the definition of $\mathfrak S$ in~\cite[Equation~(4)]{duke} involves a finite value of $\lambda$, however, this is equivalent since these densities stabilise owing to the fact that $N$ is square-free.
We now  specify the constant  $\kappa$.
When $q=\sum_{i=1}^3 x_i^2$ we have $D=8$, hence,  
$$\#\{\b x \in \Z^3: x_1^2+x_2^2+x_3^2 =N \}
= 
\frac{ \kappa\c N(16^3) }{ 2 \sqrt 2 }\sqrt N
\sum_{m=1}^\infty \l(\frac{-4N}{m}\r)\frac{1}{m} 
+O(  N^{1/2-1/30}),$$where  
 $ \c N(m)= \#\{\b x \in (\Z/m\Z)^3:x_1^2+x_2^2+x_3^2\equiv N \md m \} m^{-2} $. 
 \begin{lemma}
\label{lem2dog} For any integer $N \equiv 3 \md 8 $ and $t\geq 3 $,  the  number of solutions of 
$x_1^2+x_2^2+x_3^2 \equiv N \md {2^t}$ is $4^t$.
\end{lemma}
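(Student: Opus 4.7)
The plan is a one-shot averaging argument based on the action of units on odd residues, avoiding any delicate Hensel lifting.

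First, I would reduce to odd triples. Since $N \equiv 3 \md 8$ and the squares modulo $8$ belong to $\{0,1,4\}$, the congruence $x_1^2+x_2^2+x_3^2\equiv 3 \md 8$ forces each $x_i$ to be odd (the only admissible combination of residues is $1+1+1$). Thus, writing $S_t(N)$ for the count in the statement, $S_t(N)$ counts odd triples in $(\Z/2^t\Z)^3$ whose sum of squares is $\equiv N \md{2^t}$.

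Second, I would exploit a transitivity property. For $t\geq 3$, the squaring map on $(\Z/2^t\Z)^{\ast}\cong\Z/2\Z\times\Z/2^{t-2}\Z$ has image precisely the subgroup $\{m \equiv 1 \md 8\}$. Any two residues $N,N'\equiv 3\md 8$ then satisfy $N'N^{-1}\equiv 9\equiv 1\md 8$, so $N'\equiv u^2 N\md{2^t}$ for some unit $u$. Multiplication by $u$ yields a bijection on odd triples that sends solutions of $\sum x_i^2\equiv N$ to solutions of $\sum x_i^2\equiv u^2N$. Consequently $S_t(N)$ depends only on $N\md 8$; in particular it is constant as $N$ ranges over the residues $\equiv 3\md 8$ in $\Z/2^t\Z$.

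Finally, I would sum $S_t(N)$ over these classes. Every odd triple contributes to exactly one class $\equiv 3 \md 8$, so
\[
\sum_{\substack{N\in\Z/2^t\Z\\ N\equiv 3\md 8}} S_t(N) \;=\; \sharp\{\text{odd triples in }(\Z/2^t\Z)^3\} \;=\; (2^{t-1})^3 \;=\; 2^{3t-3}.
\]
There are $2^{t-3}$ such residue classes, so by the constancy from the previous step, $S_t(N) = 2^{3t-3}/2^{t-3} = 2^{2t} = 4^t$, as claimed.

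The main technical input is the identification of the squares in $(\Z/2^t\Z)^{\ast}$ with $\{m\equiv 1\md 8\}$ for $t\geq 3$; this is a classical fact but must be invoked explicitly since it is what guarantees the orbit-transitivity among $N\equiv 3\md 8$. Once this is in hand, the proof is an orbit-counting identity and requires no Gauss-sum or Hensel-type machinery.
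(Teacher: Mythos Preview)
Your proof is correct and takes a genuinely different route from the paper's. The paper argues by direct fibering: having reduced to odd triples, it lets $x_1,x_2$ range freely over the $2^{t-1}\cdot 2^{t-1}$ odd residues, observes that $a:=N-x_1^2-x_2^2\equiv 1\md 8$, and then invokes that every such $a$ has exactly four square roots modulo $2^t$, giving $2^{t-1}\cdot 2^{t-1}\cdot 4=4^t$. Your argument instead establishes constancy of $S_t(N)$ across the residue class $N\equiv 3\md 8$ via the scaling action $\b x\mapsto u\b x$, and then recovers the common value by averaging over the $2^{t-3}$ admissible $N$. Both proofs rest on the same classical fact about $(\Z/2^t\Z)^{\ast}$, but use complementary halves of it: the paper needs the size of the kernel of squaring (four square roots), while you need only the image (surjectivity onto $\{m\equiv 1\md 8\}$). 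Your orbit-counting is arguably more conceptual and would transplant readily to other moduli or forms; the paper's fibering is slightly more hands-on but equally short.
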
 \begin{proof} Since $N\equiv 3 \md 8 $ every $x_i$ must be odd.
Let $x_1,x_2$ run through all odd elements $\md {2^t}$ and then        count the number of $x_3$ for which 
$x_3^2 \equiv a \md {2^t}$, where $a\equiv N-x_1^2-x_2^2 \md {2^t}$. 
Here $N\equiv 3 \md 8 $, hence, $a\equiv 1 \md 8 $.
Now we use the following   fact: for $t\geq 3 $ and 
 each $a \in \Z/2^t\Z$  with  $a\equiv 1 \md 8 $, the number of solutions of $x^2 \equiv a \md {2^t}$ is $4$.
This gives a total number of solutions $ 2^{t-1} \cdot 2 ^{t-1} \cdot 4 = 4^t$. 
\end{proof}
In particular, $ \c N(16^3)=1 $.
We obtain   $$  
\#\{\b x \in \Z^3: x_1^2+x_2^2+x_3^2 =N \}
= 
\frac{ \kappa }{ 2 \sqrt 2 }\sqrt N
\sum_{m=1}^\infty \l(\frac{-4N}{m}\r)\frac{1}{m} 
+O(  N^{1/2-1/30}).$$ 
By~\cite[Theorem~B, page 99]{bateman} this equals 
$ \frac{16}{\pi}\c L \sqrt N$, where $\c L:=\sum_{m=1}^\infty \l(\frac{-4N}{m}\r)\frac{1}{m}$. By 
Siegel's theorem we have $\c L \gg N^{-1/60}$, hence, 
$$\frac{ \kappa }{ 2 \sqrt 2 }- \frac{16}{\pi}  =O\l(\frac{1}{N^{1/30}\c L}\r)=O\l(\frac{1}{N^{1/60}}\r)
,$$ which shows that  $  \kappa=   32 \sqrt 2 /\pi $.

\begin{lemma}
\label{lemma:dukethm}For each $\b c \in \N^3$ and positive 
square-free $N\equiv 3 \md 8 $ we have 
$$ \#\l\{\b x \in \Z^3: \sum_{i=1}^3 (c_i x_i)^2=N \r\}
=\frac{8}{\pi } 
L(1,\chi_{-N})h_N(\b c ) N^{1/2}
 +O((c_1c_2c_3) ^{12} N^{1/2-1/30})
,$$ where the implied constant is absolute, $h_N(\b c )$ is given by $$
 \frac{2^{ \#\{ p\mid c_1 c_2c_3\}} }{c_1 c_2 c_3}
\prod_{\substack{ p\mid c_1 c_2 c_3   }} \left ( 1-\frac{(\frac{-   N }{p})}{p}\right) 
\mathds 1(\eqref{qweeebok0}-\eqref{qweeebok3})
\hspace{-0,5cm} 
\prod_{\substack{ p\mid c_1 c_2 c_3   \\ p \mathrm{ \ divides \ exactly \  one\  } c_i}}
\hspace{-0,5cm} 
2^{-\mathds 1 (p\nmid N ) }
\l(1     - \frac{1}{p} \l(\frac{-1}{p}\r)  \r)
 $$
and \begin{align}
 & 2\nmid c_1c_2c_3, \label{qweeebok0} 
\\ & \mathrm{gcd}(c_1,c_2,c_3)=1  , \label{qweeebok1} 
\\ & p \mathrm{ \ divides \ exactly \ two \  } c_i  \Rightarrow  \l(\frac{N}{p}\r)=1,\label{qweeebok3} 
\\ & p\mid N, p \mathrm{ \ divides \ exactly \ one \  } c_i  \Rightarrow  p\equiv 1 \md 4.\label{qweeebok2} 
\end{align}
\end{lemma}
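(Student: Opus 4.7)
The plan is to apply Duke's Lemma~\ref{lem:duke} to the positive-definite ternary form
$q(\mathbf{x}) := (c_1 x_1)^2 + (c_2 x_2)^2 + (c_3 x_3)^2$. Its Hessian is diagonal with entries $2c_i^2$, giving $D = 8(c_1c_2c_3)^2$ and $\sqrt{D} = 2\sqrt{2}\,c_1c_2c_3$. Combined with $\kappa = 32\sqrt{2}/\pi$ obtained in the computation just preceding the lemma, Duke's formula produces a main term with leading constant $16/(\pi c_1c_2c_3)$ and error $O(D^{6} N^{1/2-1/30}) = O((c_1c_2c_3)^{12} N^{1/2-1/30})$, matching the claim. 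The remaining task is to evaluate $L(1,\chi_{q,N})$ and $\mathfrak{S}(q,N)$ and reassemble them into $\frac{8}{\pi} L(1,\chi_{-N}) h_N(\mathbf c)$.

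For the character: $N \equiv 3 \pmod 8$ forces $\mathrm{disc}(\mathbb{Q}(\sqrt N)) = 4N$, so the Kronecker-symbol argument is $-2D\cdot\mathrm{disc}(\mathbb{Q}(\sqrt N)) = -64N(c_1c_2c_3)^2$. The perfect square factors $(c_1c_2c_3)^2$ and $64$ contribute only support conditions, giving $\chi_{q,N}(m) = \chi_{-N}(m)\,\mathds 1_{\gcd(m,2c_1c_2c_3)=1}$ and
$$L(1,\chi_{q,N}) = L(1,\chi_{-N})\Bigl(1-\tfrac{\chi_{-N}(2)}{2}\Bigr)\prod_{\substack{p\mid c_1c_2c_3\\ p\text{ odd}}}\Bigl(1-\tfrac{\chi_{-N}(p)}{p}\Bigr).$$
Since $-N\equiv 5\pmod 8$ one has $\chi_{-N}(2) = -1$, so the $p=2$ factor equals $3/2$, while the remaining product already matches the Euler factors appearing in $h_N(\mathbf c)$.

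Next I would compute $\mathfrak S(q,N) = \prod_{p\mid 2D}\sigma_p$ prime by prime. At $p=2$: if some $c_i$ is even then $(c_i x_i)^2\equiv 0\pmod 4$, and the sum of three such squares is at most $2 \pmod 4$, so $\sigma_2=0$; this is responsible for the indicator \eqref{qweeebok0}. Otherwise $y_i = c_ix_i$ is bijective modulo $2^\lambda$ and Lemma~\ref{lem2dog} yields $\sigma_2=1$. For an odd prime $p\mid c_1c_2c_3$ I split into three sub-cases by the number of $i$ with $p\mid c_i$. If all three: $p^2 \mid q(\mathbf x)$ uniformly while $N$ is squarefree, so $\sigma_p=0$, giving \eqref{qweeebok1}. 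If exactly two, say $p\mid c_2, c_3$: the reduction mod $p$ requires $(N/p)\in\{0,1\}$, and the subcase $p\mid N$ is ruled out because the Hensel obstruction lifts to $p^2\mid N$; hence $(N/p)=1$ is forced, producing \eqref{qweeebok3}, and Hensel then yields a closed-form $\sigma_p$ that combines with the corresponding $L$-factor to match a piece of $h_N(\mathbf c)$.

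The main obstacle is the case where an odd prime $p$ divides exactly one of the $c_i$, because this generates the delicate factor $2^{-\mathds 1(p\nmid N)}(1-(-1/p)/p)$ in $h_N(\mathbf c)$ together with \eqref{qweeebok2}. I would handle it by substituting $(y_2, y_3) = (c_2 x_2, c_3 x_3)$, reducing the local density at $p$ to counting pairs $(y_2, y_3)$ representing $N-(c_1 x_1)^2$ as a sum of two squares modulo $p^\lambda$. The classical representation numbers for $y^2+z^2$ modulo prime powers---collected for this purpose in Jones~\cite{ednajones}---equal $p^\lambda\bigl(1-(-1/p)/p\bigr)$ when the target residue is coprime to $p$, while for residues divisible by $p$ they vanish unless $p\equiv 1\pmod 4$, giving \eqref{qweeebok2}. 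Summing over $x_1$ and contrasting the cases $p\nmid N$ and $p\mid N$ introduces the $2^{-\mathds 1(p\nmid N)}$ adjustment from the parity of $v_p(N-(c_1x_1)^2)$. Multiplying together the $L$-function and $\sigma_p$ contributions across all primes, together with the external factor $16/(\pi c_1c_2c_3)$, then assembles exactly into $\frac{8}{\pi}L(1,\chi_{-N}) h_N(\mathbf c) N^{1/2}$, completing the proof.
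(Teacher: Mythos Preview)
Your proposal is correct and follows essentially the same route as the paper: apply Duke's Lemma~\ref{lem:duke} to $q=\sum (c_ix_i)^2$, identify the character and rewrite $L(1,\chi_{q,N})$ in terms of $L(1,\chi_{-N})$, then evaluate the local densities $\sigma_p$ prime by prime according to how many $c_i$ the prime divides. The paper obtains the local factors by quoting the explicit formulas~\cite[Equations~(1.4)--(1.5)]{ednajones} directly, whereas you outline the same computation more intrinsically (Hensel lifting and the two-squares count modulo $p^\lambda$), but the substance is the same.
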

\begin{proof}
We use  Lemma~\ref{lem:duke} with 
   $q= \sum_{i=1}^3 c_i^2 x_i^2$ so that $ D = 8 (c_1 c_2 c_3 )^2$.
Let us note that 
 $ \mathrm{disc}(\Q(\sqrt N))= 4 N,$ hence, the character $\chi_{q,N}(m)$ is given by 
 $$   \l(\frac{ -2 \cdot 8 (c_1 c_2 c_3 )^2 \cdot  4 N }{m }\r)
=\l(\frac{ -   N }{m }\r)\mathds 1 (\gcd(2c_1 c_2 c_3, m)=1).$$ Therefore, the  value of the corresponding $L$-function at $1$ is 
$$
 \prod_{p\nmid 2 c_1c_2c_3} \frac{1}{\left ( 1-\frac{(\frac{-   N }{p})}{p}\right) }
=
 \frac{ L(1,\chi_{-N})}{2} 
 \prod_{\substack{ p\mid c_1c_2c_3 \\ p\neq 2 }} \left ( 1-\frac{(\frac{-   N }{p})}{p}\right) 
.$$ 
To work out the   term $\mathfrak S$ we use 
 the work of Jones~\cite{ednajones}.
In the terminology of~\cite[Theorem~1.3]{ednajones} 
we take  $Q=\sum_{i=1}^3 (c_i x_i)^2, m=N$.
When $p\neq 2$ divides exactly one of the $c_i$, say, $c_3$,
then we take 
$a=c_1^2, b_1=0$ and~\cite[Equation~(1.5)]{ednajones}   shows that 
the $p$-adic factor in $\mathfrak S$ equals 
$$
\mathds 1 (p\mid N) 2\l(1-\frac{1}{p} \r) \mathds 1 (p\equiv 1 \md 4 )
+
\mathds 1 (p\nmid N)  \l(1-\frac{1}{p} \l(\frac{-1}{p} \r)\r) 
.$$
If $p$ divides exactly two of the $c_i$'s, say $c_2$ and $c_3$ then by taking 
$a=c_1^2$ in~\cite[Equation~(1.4)]{ednajones} shows that the $p$-adic factor in $\mathfrak S$ becomes 
 $2   $ or $0$, according to whether $(\frac{N}{p} )=1$ or not.
Finally, since $N$ is square-free, there is no prime $p$ that divides every $c_i$ since that would imply that $p^2 $ divides $N$.
Furthermore, by Lemma~\ref{lem2dog}  the $2$-adic density equals  $1$.
\end{proof}

\subsection{The one-variable case}\label{s:onevarr} The case $\alpha_1=1, \alpha_2=\alpha_3=0$ can be treated in a straightforward manner and 
we deal with it in this subsection. We  use Theorem~\ref{tMain} with $$\mathcal A=  (\Z\setminus \{0\} )^3,  c_a=|y_1|,
T=N, \chi_N(a)=\mathds 1 _{\{N\}} (y_1 ^2+ y_2 ^2+ y_3^2), M(N)=
\frac{8}{\pi } 
L(1,\chi_{-N})    N^{1/2} .$$ To show that assumption~\eqref{eEquiProgression} holds we use
Lemma~\ref{lemma:dukethm} to infer that 
$$\sum_{\substack{ a\in \c A \\ d\mid y_1  } } \chi_N(a)
=\frac{8}{\pi } 
L(1,\chi_{-N})  G_N(d ) N^{1/2}
 +O(d ^{12} N^{1/2-1/30})
,$$
where $G_N$ is multiplicative and defined as 
$$
p^e G_N(p^e) =
\mathds 1(p\neq 2)
2^{\mathds 1(p\mid N)}
\mathds 1(p\mid N\Rightarrow p\equiv 1 \md 4 )
 \l(1     - \frac{1}{p} \l(\frac{-N}{p}\r)  \r) \l(1     - \frac{1}{p} \l(\frac{-1}{p}\r)  \r)
.$$
We have $N^{1/2-\epsilon}\ll M(N)\ll N^{1/2+\epsilon}$ for every fixed $\epsilon>0$
by  Siegel's theorem. Now let $\theta,\xi$ be positive constants that will be fixed later. For all $d \leq M(N)^\theta$ and any fixed positive constant $\epsilon$,
we have 
  $$  d ^{12} N^{1/2-1/30} \ll M(N)^{12 \theta+1-1/15+\epsilon}.$$ Hence,~\eqref{eEquiProgression} holds for some $\xi>0$ as long as 
$12 \theta <1/15$. In particular, it holds when $\theta= 10^{-3}$. 
Assumptions~\eqref{eLowPower}-\eqref{eHighPower}   hold 
due to the bound $G_N(p^e)\ll p^{-e}$ that is valid with an absolute implied constant. One can take $\kappa=10$ in~\eqref{elowaverg}
due to the estimate  $G_N(p) \leq 10/p$ that holds for all primes $p$.

Thus, Theorem~\ref{tMain} shows that 
$$ \sum_{ \substack{ \b x \in  (\Z\setminus \{0\} )^3 \\ x_1^2 + x_2^2 +x_3^2 =N  }} f(|x_1|) 
  \ll  M(N)\prod_{\substack{ 1\ll p\leq M(N)    } } (1-G_N(p) ) 
\sum_{\substack{ a\leq M(N)  }} f(a)   G_N(a)  
.$$ By Lemma~\ref{monteverdi frammenti} we infer that 
$$\prod_{\substack{ 1\ll p\leq M(N)    } } (1-G_N(p) ) 
\sum_{\substack{ a\leq M(N)  }} f(a)   G_N(a)   
\asymp \exp\l( \sum_{p\leq M(N) } (f(p)-1) G_N(p) \r) 
,$$ where the implied constants are independent of $N$.
The sum over $p$ equals 
$$
\sum_{ p\mid N }\l(\frac{-1}{p}\r) \frac{f(p)-1}{p} 
+\sum_{\substack{ p\leq N  }}\frac{f(p)-1}{p}  
+O\Big(1+\sum_{\substack{ p>M(N) \\ p\mid N }} \frac{1}{p}+\sum_{\substack{ M(N)< p \leq N }} \frac{1}{p} \Big)
.$$We have $M(N) \gg N^{1/4}$ by Siegel's theorem, thus, the error term is 
$ \ll 1+ N^{-1/4} \omega(N) $ is bounded, something that   suffices for the proof of the upper bound.
To prove the lower bound we apply Theorem~\ref{thm:lower} in the same manner.

\subsection{Transformation}\label{s:trnvfmt} To transform the sums in Theorem~\ref{thm:3sqrsmultiplc} 
a preliminary step is to show that  for most integer solutions of $x_1^2+ x_2^2+x_3^2=N$  
the common divisors of each pair $(x_i,x_j)$ are typically small. In 
Lemma~\ref{lemghfrommools}  we show that these divisors are frequently smaller than any fixed power of $N$, while in 
Lemmas~\ref{lem:brjsd88y}-\ref{lemcoroohshsls} we show that these divisors are smaller than a   power of $\log N$. The latter task 
combines equidistribution in the form of Lemma~\ref{lemma:dukethm}
with a  ``level-lowering'' mechanism that is grounded on   work  of Brady~\cite{MR3708522}. 
\begin{lemma}\label{lemghfrommools} Fix    any   $s>0$ and $\delta \in (0,1/6)$. Then for any  positive square-free  integer 
$N\equiv 3 \md 4 $ we have   $$ \sum_{c>N^\delta} \sum_{\substack{ \b x \in (\Z\setminus \{0\} )^3, c \mid (x_1,x_2) \\x_1^2 + x_2^2 +x_3^2 =N }} 
(\tau(x_1) \tau(x_2) \tau(x_3) )^s
 \ll  N^{1/2-\delta/2}  L(1,\chi_{-N}),$$ where the implied constant depends at most on $\delta$ and $s$.
\end{lemma}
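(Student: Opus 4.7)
The weighted sum factors through the unweighted count $N_c := \#\{\b x \in (\Z \setminus \{0\})^3 : c \mid x_1,\; c \mid x_2,\; \sum x_i^2 = N\}$. Using the divisor bound $\tau(x_i)^s \ll_{s,\epsilon} N^{\epsilon}$ on each coordinate, the lemma reduces to proving
\[
\sum_{c > N^\delta} N_c \ll L(1, \chi_{-N}) \, N^{1/2 - \delta/2 - \epsilon_0}
\]
for some $\epsilon_0 = \epsilon_0(\delta) > 0$. The change of variables $x_i = c y_i$ ($i=1,2$) rewrites $N_c$ as the count of integer representations of $N$ by the ternary form $c^2(y_1^2 + y_2^2) + x_3^2$. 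Applying Lemma~\ref{lemma:dukethm} with coefficient vector $(c,c,1)$ yields
\[
N_c = \frac{8}{\pi}\, L(1,\chi_{-N})\, h_N(c,c,1)\, N^{1/2} + O\bigl(c^{24}\, N^{1/2 - 1/30}\bigr),
\]
with $h_N(c,c,1) \ll 2^{\omega(c)}/c^2$. Summing the Duke main term over $c > N^\delta$ via $\sum_{c > N^\delta} 2^{\omega(c)}/c^2 \ll (\log N)/N^\delta$ produces a contribution of order $L(1,\chi_{-N}) N^{1/2-\delta}\log N$, comfortably inside the target bound.

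The difficulty lies with the Duke error, which is admissible only for $c$ in a tiny initial range. My plan is to split at a threshold $Y$ and proceed as follows. For $c \in (N^\delta, Y]$ I would retain the Duke estimate, choosing $Y$ small enough so that $Y^{25} N^{1/2 - 1/30}$ stays below $N^{1/2 - \delta/2}$. For the complementary range $c \in (Y, \sqrt{N}]$ I would bypass Duke and estimate directly by writing
\[
N_c = \sum_{\substack{|x_3| \leq \sqrt{N} \\ c^2 \mid N - x_3^2}} r_2\!\left(\frac{N - x_3^2}{c^2}\right),
\]
applying $r_2 \leq 4\tau \ll N^\epsilon$, and swapping the order of summation so that the task becomes bounding $\sum_{|x_3| \leq \sqrt{N}} D_{>Y}(N - x_3^2)$ with $D_{>Y}(m) := \#\{c > Y : c^2 \mid m\}$.

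The crux is therefore a divisor-type sum over values of a polynomial. I would control $D_{>Y}$ by a submultiplicative majorant (built out of the square-part of $m$) and feed this into Theorem~\ref{thm:nrtnbm} applied to $Q(x) = N - x^2$, which is irreducible over $\Q$ since $N$ is not a perfect square. This yields a saving of essentially $Y^{-1}$ and, after collecting terms, the final estimate $\ll L(1, \chi_{-N}) N^{1/2 - \delta/2}$. The restriction $\delta < 1/6$ reflects the compatibility requirement on $Y$ between the two ranges. I expect the main obstacle to be proving the sharp bound on $\sum_{|x_3| \leq \sqrt{N}} D_{>Y}(N - x_3^2)$, since a naive union bound loses a factor of $\log N$ because of the ``$+1$'' contribution coming from residue classes modulo $c^2$ that may, or may not, meet the interval $[-\sqrt{N}, \sqrt{N}]$; overcoming this loss is where the Nair--Tenenbaum input (in the form of Theorem~\ref{thm:nrtnbm}) does the decisive work.
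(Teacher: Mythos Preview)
Your reduction to the unweighted count via the divisor bound and the rewriting through $r_2$ match the paper's opening move, but the plan thereafter has a real gap. The appeal to Theorem~\ref{thm:nrtnbm} with $Q(x)=N-x^2$ fails on uniformity: the implied constant there depends on $Q$, hence on $N$, and no uniform version is proved in the paper. Independently, $D_{>Y}$ does not lie in any class $\mathcal M(A,\epsilon,C)$, because $D_{>Y}(1)=0$ together with the defining inequality forces $D_{>Y}\equiv 0$; any genuine multiplicative majorant (e.g.\ $Y^{-\alpha}\sum_{c^2\mid m}c^\alpha$) loses the sharp $Y^{-1}$ saving once it is fed through the Nair--Tenenbaum machine, and the $\varrho_Q$-sum on the right again carries $N$-dependence. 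Separately, the Duke step is essentially vacuous here: the error $O(c^{24}N^{1/2-1/30})$ forces $Y\leq N^{(1/30-\delta/2)/25}$, which is below $N^\delta$ unless $\delta\lesssim 1/765$, so for the stated range $\delta\in(0,1/6)$ the entire burden falls on the Nair--Tenenbaum part. (Also, Lemma~\ref{lemma:dukethm} is only available for $N\equiv 3\ (\mathrm{mod}\ 8)$, whereas the present lemma assumes merely $N\equiv 3\ (\mathrm{mod}\ 4)$.)

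The paper's proof is entirely elementary and avoids both Duke and Nair--Tenenbaum. After bounding by $N^{2\epsilon}\sum_{c>N^\delta}\#\{|x_3|\leq N^{1/2}:c^2\mid N-x_3^2\}$, it splits at $c=N^{1/2-\delta}$. For $N^\delta<c\leq N^{1/2-\delta}$ the residue-class bound $\#\{t\bmod c^2:t^2\equiv N\}\ll N^\epsilon$ times $(N^{1/2}/c^2+1)$ already works: both the $N^{1/2}/c^2$ sum and the ``$+1$'' sum are $\ll N^{1/2-\delta+\epsilon}$, since the range has length $\leq N^{1/2-\delta}$. For $c>N^{1/2-\delta}$ the key observation is that $D:=(N-x_3^2)/c^2\leq N^{2\delta}$ is small; swapping to a sum over $D$ one counts representations $N=x_3^2+Dc^2$ by a positive-definite binary form, which is $\ll N^\epsilon$ uniformly. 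This yields $\ll N^{2\delta+O(\epsilon)}$, and the hypothesis $\delta<1/6$ gives $2\delta<1/2-\delta$. Siegel's bound $L(1,\chi_{-N})\gg N^{-\epsilon}$ absorbs the $N^\epsilon$ losses. The ``$+1$'' obstacle you flagged is thus handled not by a sieve but by this change of variable to the small parameter $D$ in the regime where $c$ is large.
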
\begin{proof} Since $c^2 \mid x_1^2+x_2^2=N-x_3^2$, we obtain the   upper bound   
$$ \ll_{\epsilon,s}    N^\epsilon  \sum_{c> y }  \sum_{\substack{  |x_3|\leq N^{1/2} \\ c^2\mid N-x_3^2 }  } r_2(N-x_3^2 )
\ll_\epsilon N^{2\epsilon} \sum_{ c> y } \#\{ |x_3|\leq N^{1/2}: c^2\mid N-x_3^2\}.$$ 
We shall now split in two ranges: $$N^\delta<c \leq N^{1/2-\delta}
\hspace{0.5cm} \textrm{ and } \hspace{0.5cm} c > N^{1/2-\delta}
.$$ For the second range we write  $D=(N-x_3^2)/c^2$ and  note that 
$D \leq N^{2\delta}$. Swapping summation thus leads to 
$$  \sum_{ c> N^{1/2-\delta} } \#\{ |x_3|\leq N^{1/2}: c^2\mid N-x_3^2\}\leq 
\sum_{1\leq D \leq N^{2\delta} } \#\{c,x_3 \in \Z: N=x_3^2+ D c^2\}.$$ Since $D>0$, the unit  group of  $\Q(\sqrt{-D})$ is
bounded independently of $D$. From the theory of binary quadratic forms 
we can then infer that  $$ \#\{c,x_3 \in \Z: N=x_3^2+ D c^2\} \ll \sum_{m\mid N} \left( \frac{D}{m}\right) 
\ll _\epsilon N^\epsilon,$$ where the implied constant   
depends only on   $\epsilon$. This gives the overall bound $$\ll N^{3\epsilon+2\delta}\ll
N^{4\epsilon+2\delta} L(1,\chi_{-N})$$ by Siegel's estimate. Using $\delta<1/6$ we see that $2\delta<1/2-\delta$, thus, taking $\epsilon=\delta/8$
gives the bound $N^{1/2-\delta/2} L(1,\chi_{-N})$, which is satisfactory.

We next deal with the first range. Splitting in progressions we get 
$$ \#\{ |x_3|\leq N^{1/2}: c^2\mid N-x_3^2\} \leq \sum_{\substack{ t\in \Z/c^2 \Z  \\ c^2 \mid N-t^2  }} \l(\frac{N^{1/2}}{c^2}+1\r)\ll 
N^\epsilon \l(\frac{N^{1/2}}{c^2}+1\r) .$$ Summing over the range  $ N^\delta<c \leq N^{1/2-\delta} $ this gives 
$\ll N^{1/2-\delta +\epsilon }$, which is acceptable upon choosing a suitably small value for $\epsilon$.   \end{proof}
The proof of the next two results uses crucially that the range $c>N^\delta$ has already been dealt with.
\begin{lemma}\label{lem:brjsd88y} Fix arbitrary $ s>0$ and let     $\beta=60(100 s-1)/7$.
For any $\b c \in \N^3$ and   positive square-free integer $N\equiv 3 \md 8 $  we have
\begin{align*} \sum_{\substack{ \b x \in (\Z\setminus\{0\})^3, c_i\mid x_i \forall i \\ x_1^2+x_2^2+x_3^2=N  }} (\tau(x_1)\tau(x_2)\tau(x_3))^s
&\ll L(1,\chi_{-N}) N^{1/2} (\log \log N)^2 (\log N)^{3\cdot 2^{\beta+1}} \prod_{i=1}^3  \frac{ \tau(c_i)^{\beta+3}}{c_i }  \\
&+  N^{1/2-1/100 } ( c_1 c_2 c_3)^{12} ,\end{align*} 
where  the implied constant depends at most on $\beta $ and $s$.
\end{lemma}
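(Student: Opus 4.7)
The strategy is to substitute $x_i=c_iy_i$ with $y_i\in\Z\setminus\{0\}$ so that the equation becomes $\sum_{i=1}^3(c_iy_i)^2=N$, and to use the submultiplicativity $\tau(c_iy_i)^s\leq\tau(c_i)^s\tau(y_i)^s$ to pull out a factor $\prod_i\tau(c_i)^s$, which is absorbed into the target $\prod_i\tau(c_i)^{\beta+3}$ since $s$ is much smaller than $\beta$. This reduces the task to bounding the weighted count
\[
\sum_{\substack{\b y\in(\Z\setminus\{0\})^3\\ \sum(c_iy_i)^2=N}}\tau(y_1)^s\tau(y_2)^s\tau(y_3)^s.
\]

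To handle each $\tau(y_i)^s$, my plan is to insert a truncated divisor expansion. For a threshold $T=N^\eta$ with $\eta$ a small positive parameter and a non-negative multiplicative weight $g$ with $g(p)\leq 2^{\beta+1}$ and $g(p^e)\leq\tau(p^e)^\beta$, one has an elementary bound of the shape
\[
\tau(y)^s \ll \sum_{\substack{d\mid y\\ d\leq T}}g(d) + \sum_{\substack{d\mid y\\ d>T}}g(d)\Bigl(\frac{d}{T}\Bigr)^\sigma
\]
for a small $\sigma>0$, the second summand being a Rankin tail. Expanding three such factors and interchanging sums, the main contribution is
\[
\sum_{d_1,d_2,d_3\leq T}g(d_1)g(d_2)g(d_3)\,\#\{\b y\in(\Z\setminus\{0\})^3:d_i\mid y_i,\ {\textstyle\sum_i}(c_iy_i)^2=N\},
\]
and the inner count is exactly the quantity produced by Lemma~\ref{lemma:dukethm} with $c_id_i$ in place of $c_i$. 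This yields the main term $\frac{8}{\pi}L(1,\chi_{-N})h_N(c_1d_1,c_2d_2,c_3d_3)N^{1/2}$ plus the error $O((c_1c_2c_3d_1d_2d_3)^{12}N^{1/2-1/30})$.

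For the main term, multiplicativity of $h_N$ decouples the $c_i$ and $d_i$ contributions up to prime-divisor corrections at $p\mid c_iN$, which furnish the $(\log\log N)^2$ factor. Summing $g(d_i)$ against the density at $d_i$ over $d_i\leq T$ via Mertens yields a factor $(\log N)^{2^{\beta+1}}$ per coordinate, so the three coordinates together contribute $(\log N)^{3\cdot 2^{\beta+1}}$, together with $\prod_i\tau(c_i)^{\beta+3}/c_i$ after absorbing the small extra local factors. For the Duke error, the summed weight $\sum_{\b d\leq T}g(\b d)(d_1d_2d_3)^{12}$ is $O(T^{36+o(1)})$, so choosing $\eta$ with $T^{36}N^{-1/30}\leq N^{-1/100}$ produces the second term $(c_1c_2c_3)^{12}N^{1/2-1/100}$ of the target bound. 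The Rankin tail is then absorbed by choosing $\sigma$ small relative to $\beta$; the explicit identity $\beta=60(100s-1)/7$ is precisely what renders all these constraints compatible.

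\textbf{Main obstacle.} The delicate point is the simultaneous balance of the cut-off $T$, the divisor exponent $\beta$, and the Rankin parameter $\sigma$. The parameter $T$ must be small enough for the Duke error summed over $\b d$ to fit under $N^{1/2-1/100}$, yet $\beta$ must be large enough that the truncated divisor bound faithfully dominates $\tau(y)^s$ after the tail has been absorbed. The numerical oddness of the exponents $\beta+3$ on $\tau(c_i)$ and $3\cdot 2^{\beta+1}$ on $\log N$ is exactly the bookkeeping cost of meeting all three requirements with the specific Duke error exponent $1/30$ and target loss $1/100$.
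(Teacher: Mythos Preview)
Your overall architecture---truncate the divisor sum, apply Lemma~\ref{lemma:dukethm} to the resulting congruence counts, and split into main term plus Duke error---is exactly right, and your treatment of the main term and of the summed Duke error matches the paper. The gap is in the truncation step itself.

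You propose the elementary bound
\[
\tau(y)^s \ll \sum_{\substack{d\mid y\\ d\leq T}} g(d) \;+\; T^{-\sigma}\sum_{\substack{d\mid y\\ d>T}} g(d)\,d^{\sigma},
\]
and claim the Rankin tail is ``absorbed by choosing $\sigma$ small relative to~$\beta$''. But the Duke error constraint forces $T=N^{\eta}$ with $\eta$ extremely small (roughly $\eta\approx 1/1000$), while $|y_i|$ can be as large as $N^{1/2}/c_i$. Summing the tail over $\b y$ on the surface and switching to a sum over $d_3>T$ (say), the Duke \emph{main} term contributes a factor $\asymp 1/d_3$, and
\[
T^{-\sigma}\sum_{T<d_3\leq N^{1/2}} \frac{g(d_3)\,d_3^{\sigma}}{d_3}
\;\gg\; T^{-\sigma}\,N^{\sigma/2}(\log N)^{2^{\beta}-1}
\;=\; N^{\sigma(1/2-\eta)}(\log N)^{2^{\beta}-1},
\]
which is a \emph{positive} power of $N$ for any $\sigma>0$ since $\eta<1/2$. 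The Rankin tail is therefore not negligible; it is larger than the main term. No choice of $\sigma$ rescues this, because the tail sees divisors all the way up to $N^{1/2}$ while the prefactor only compensates up to~$T$.

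What the paper uses instead is Brady's pointwise divisor inequality~\cite[Theorem~4]{MR3708522}: for $\delta=7/6000$ the binary entropy satisfies $H(\delta)>1/100$, and with $7\beta+60=6000s$ (equivalently $\beta=60(100s-1)/7$) one obtains
\[
\tau(n)^s \;\ll_{\beta,s}\; \sum_{\substack{d\mid n\\ d\leq n^{\delta}}} \tau(d)^{\beta}
\]
with \emph{no} tail term whatsoever. This is a genuinely non-elementary input; the entropy condition is what permits truncation at an arbitrarily small power while keeping a finite weight~$\beta$. Once Brady's inequality is invoked, your handling of the main and error contributions goes through essentially as you describe (the paper works directly with $[c_i,d_i]\mid x_i$ rather than substituting $x_i=c_iy_i$, but this is a minor reorganisation).
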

\begin{proof}
The function 
 $H(\delta)=\delta \log_2(\delta^{-1} )+(1-\delta) \log_2(1-\delta)^{-1} $
satisfies  $H(7/6000) > 1/100$. Taking $\delta=7/6000$ 
we   see that the assumption $7\beta +60 = 6000 s$ 
 allows us to use~\cite[Theorem~4]{MR3708522}. This yields the following bound  for the sum over $\b x $ in the lemma:
$$
\ll_{\beta,s} \sum_{\substack{ \b d \in \N^3 \\ d_i \leq N^{\delta/2}\forall i } } (\tau(d_1)\tau(d_2)\tau(d_3) )^\beta
\#\{  \b x \in (\Z\setminus\{0\})^3: x_1^2+x_2^2+x_3^2=N,  [c_i,d_i] \mid x_i\forall i  \},$$where $[\cdot ,\cdot ]$ denotes 
the least common multiple. By Lemma~\ref{lemma:dukethm}   this can be bounded by 
$$ \ll\hspace{-0.3cm} \sum_{\substack{ \b d \in \N^3 \\ d_i \leq N^{\delta/2}\forall i } } \hspace{-0.3cm}
\Big(\prod_{i=1}^3\tau(d_i)^\beta \Big) (\log \log N)^2
\Big( L(1,\chi_{-N}) N^{1/2}\prod_{i=1}^3 \frac{ \tau([c_i,d_i] )}{[c_i,d_i] }+ N^{1/2-1/30}
\prod_{i=1}^3[c_i,d_i]^{12} \Big),$$ where we used the following standard bound for $t=c_1c_2c_3$,
\begin{equation}\label{albinoni oboe concertos kick butt, innit bruv. WHAT DID YA SAY?}
\prod_{p\mid t }\l(1+\frac{1}{p}\r)\ll \prod_{p\mid t }\l(1-\frac{1}{p}\r)^{-1}=\frac{t}{\phi(t)} \ll \log \log t
.\end{equation}
 Using $[c_i,d_i]\leq c_i d_i $ we can see that the second part of this sum is 
$$ \ll   N^{1/2-1/30} \prod_{i=1}^3 c_i^{12} 
\l(
\sum_{ d \leq N^{\delta/2}  } \tau(d)^\beta d^{12} 
 \r)^3
\ll 
N^{1/2-1/30+19.5 \delta } (\log N)^{3(2^\beta- 1)}\prod_{i=1}^3 c_i^{12} 
,$$ which is $ \ll N^{1/2-1/30+20 \delta }  \prod c_i^{12} $. Our choice $\delta=7/6000$ makes sure that this is $ \ll N^{1/2-1/100 }  \prod c_i^{12} $.
The first part of the sum is $ \ll L(1,\chi_{-N}) (\log \log N)^2N^{1/2} \prod \c S(c_i)$, where    $$\c S(c):=
\sum_{  d\leq N  } \tau(d)^\beta
 \frac{ \tau([c,d] )}{[c,d] }
\leq 
\frac{\tau(c) }{c}
\sum_{  d\leq N } \tau(d)^{\beta +1} 
 \frac{ \gcd(c,d) }{d }
.$$
Writing $m=\gcd(c,d) $ and $d= m t $, the sum over $d$ can be seen to be at most  
$$ 
\sum_{m\mid c}  m 
\sum_{  \substack{ d\leq N  \\ m\mid d  }}
 \frac{  \tau(d)^{\beta +1}  }{d }
\leq 
\sum_{m\mid c}     \tau(m  )^{\beta +1}  
\sum_{  \substack{ t\leq N   }}
 \frac{  \tau( t )^{\beta +1}  }{t  }
\ll   \tau(c) ^{\beta+2}  
(\log N)^{2^{\beta+1}}
,$$ which is sufficient.
\end{proof}

\begin{lemma}\label{lemcoroohshsls} Fix any positive $A$ and $s$. Then 
for  any   positive square-free   $N\equiv 3 \md 8 $ we have 
$$ \sum_{c>(\log N)^A} \sum_{\substack{\b x \in  (\Z\setminus\{0\})^3, c\mid (x_1,x_2)  \\x_1^2 + x_2^2 +x_3^2 =N } }
(\tau(x_1)\tau(x_2) \tau(x_3) )^s
\ll_{A,s} 
L(1,\chi_{-N}) N^{1/2}
(\log N)^{\rho(s) -A/2}
 ,  $$ where $\rho(s)=6 \cdot 2^{(6000s-60 )/7 }$
 and the implied constant depends at most on $A$ and $s$.
\end{lemma}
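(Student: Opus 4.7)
The plan is to split the summation over $c$ at $N^{\delta_0}$ for a sufficiently small constant $\delta_0 > 0$, dispose of the tail $c > N^{\delta_0}$ by Lemma~\ref{lemghfrommools}, and treat the remaining range $(\log N)^A < c \leq N^{\delta_0}$ by applying Lemma~\ref{lem:brjsd88y} with $c_1 = c_2 = c$ and $c_3 = 1$, then summing over $c$. The key numerical identity is $3 \cdot 2^{\beta + 1} = \rho(s)$, which follows directly from $\beta = (6000s - 60)/7$.

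For the tail, Lemma~\ref{lemghfrommools} immediately yields an upper bound of $O(L(1,\chi_{-N}) N^{1/2 - \delta_0/2})$, and the polynomial saving $N^{-\delta_0/2}$ absorbs any negative power of $\log N$, in particular $(\log N)^{\rho(s) - A/2}$. For the main range, Lemma~\ref{lem:brjsd88y} with $c_1=c_2=c$, $c_3=1$ contributes, for each fixed $c$, a main term of order
$$ L(1,\chi_{-N}) N^{1/2} (\log\log N)^2 (\log N)^{\rho(s)} \frac{\tau(c)^{2(\beta+3)}}{c^2} $$
and an error $N^{1/2-1/100}c^{24}$. Summing the main term is done using the divisor bound $\tau(c)^{2(\beta+3)}\ll_{\eta} c^{\eta}$ with $\eta$ fixed strictly less than $1/2$, which yields
$$ \sum_{c>(\log N)^A} \frac{\tau(c)^{2(\beta+3)}}{c^2} \ll (\log N)^{-A(1-\eta)}. $$
Hence the product with $(\log\log N)^2 (\log N)^{\rho(s)}$ is dominated by $L(1,\chi_{-N}) N^{1/2}(\log N)^{\rho(s) - A/2}$ for $N$ large, the positive gap $A(1-\eta) - A/2$ absorbing all iterated-logarithm factors. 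The error term summed over $c\leq N^{\delta_0}$ is $\ll N^{1/2-1/100+25\delta_0}$; taking $\delta_0$ small (for instance $\delta_0 = 10^{-4}$) and invoking Siegel's lower bound $L(1,\chi_{-N})\gg_\eta N^{-\eta}$ with a suitably small $\eta > 0$ dominates this by $L(1,\chi_{-N}) N^{1/2}$ with a polynomial saving to spare.

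The argument is essentially bookkeeping: the genuinely difficult work (the level-lowering via the theorem of Brady) is already encapsulated in Lemma~\ref{lem:brjsd88y}, and the present lemma merely repackages that output through a tail estimate on $\tau$-weighted sums. The only non-trivial points one has to verify carefully are the matching of the exponent $3 \cdot 2^{\beta + 1}$ coming from Lemma~\ref{lem:brjsd88y} with the target exponent $\rho(s)$, and the quantitative choice of $\delta_0$ small enough to ensure that the Siegel-allowed loss in the error term remains strictly smaller than the polynomial saving $N^{-1/100 + 25\delta_0}$ provided by the second term of Lemma~\ref{lem:brjsd88y}.
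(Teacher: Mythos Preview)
Your proof is correct and follows essentially the same approach as the paper: split at $N^{\delta_0}$, dispose of the tail via Lemma~\ref{lemghfrommools}, apply Lemma~\ref{lem:brjsd88y} with $c_1=c_2=c$, $c_3=1$ on the remaining range, and sum over $c$. The only cosmetic difference is that the paper asserts the tail bound $\sum_{c>(\log N)^A}\tau(c)^{2\beta+6}/c^2 \ll (\log N)^{-A/2}$ directly (the $(\log\log N)^2$ factor being absorbed into the slack between $-A$ and $-A/2$), whereas you route through the pointwise estimate $\tau(c)^{2(\beta+3)}\ll_\eta c^\eta$ with $\eta<1/2$; both lead to the same conclusion.
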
\begin{proof} Fix any $\delta>0$. By Lemma~\ref{lemghfrommools} we can discard the contribution of $c>N^\delta$.
For the remaining $c$ we employ 
Lemma~\ref{lem:brjsd88y} 
with $\beta$ defined by 
$7\beta +60 = 6000 s$.
 We    obtain 
\begin{align*}
& \ll
\sum_{(\log N)^A<c \leq N^\delta }
\l(
L(1,\chi_{-N}) N^{1/2}(\log \log N)^2
(\log N)^{3\cdot 2^{\beta+1}}
 \frac{ \tau(c)^{2\beta+6}}{c^2} 
+N^{1/2-1/100 }   c^{24} \r)
\\
& \ll
L(1,\chi_{-N}) N^{1/2}
(\log N)^{3\cdot 2^{\beta+1}-A/2}
 +N^{1/2-1/100 +25 \delta } 
.\end{align*}
Choosing sufficiently small $\delta$ and using 
  Siegel's bound we obtain   
$$N^{ -1/100 +25 \delta } 
\ll N^{ -1/1000}\ll
L(1,\chi_{-N})  
(\log N)^{3\cdot 2^{\beta+1}-A/2}
,$$ which is sufficient.
\end{proof}
Define for $N, m_1,m_2,m_3\in\N$ the function $$
\c R_\b m(N):=
\sum_{\substack{ \b y\in (\Z\setminus\{0\})^3: \sum_i  (m_{j} m_{k} y_i )^2=N \\  \gcd(y_i,  y_j)=1 \forall i\neq j
   } }  
f(  y_1^{\alpha_1} y_2^{\alpha_2} y_3^{\alpha_3})
.$$

\begin{lemma}\label{lem:trsnfmrcohenlenstr}Fix any $A>0$. In the setting of Theorem~\ref{thm:3sqrsmultiplc} we have 
$$
 \sum_{ \substack{ \b x \in  (\Z\setminus \{0\} )^3 \\ x_1^2 + x_2^2 +x_3^2 =N  }} \prod_{i=1}^3 f(x_i)^{\alpha_i} 
\ll \sum_{\substack{\b m\in \N^3, \eqref{eq:condmmm}  \\ \max m_i \leq (\log N)^A  } }  
 \c R_\b m(N)
\prod_{i=1}^3  \tau(m_i) ^{2s} 
+  \frac{  L(1,\chi_{-N}) N^{1/2}}{(\log N)^{A/2-\rho(s)}} ,$$where $\rho(s)$ is as in Lemma~\ref{lemcoroohshsls},
 the implied constant depends at most on  $s,A$ and
\begin{equation}\label{eq:condmmm}\gcd(m_{i},2m_{j})=1 \forall i\neq j\ \ \ 
p\mid m_1m_2m_3 \Rightarrow \l(\frac{N}{p}\r)=1
 .\end{equation}
\end{lemma}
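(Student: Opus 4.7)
The plan is to parametrize each integer solution $\b x$ of $x_1^2+x_2^2+x_3^2=N$ by its triple of pairwise greatest common divisors, use Lemma~\ref{lemcoroohshsls} to truncate the range of these gcds, and then apply the hypothesis $f(ab)\leq\tau(a)^s f(b)$ to transfer the multiplicative weight onto the gcd-free variables $\b y$.

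First I would split the LHS according as whether every $\gcd(x_i,x_j)\leq(\log N)^A$ or whether some pair has gcd exceeding $(\log N)^A$. The hypothesis $f(ab)\leq\tau(a)^s f(b)$ applied with $b=1$ yields $f(m)\leq\tau(m)^s$, hence $\prod_i f(x_i)^{\alpha_i}\leq\prod_i\tau(x_i)^s$. Applying Lemma~\ref{lemcoroohshsls} to each of the three pairs $(x_1,x_2)$, $(x_1,x_3)$, $(x_2,x_3)$ (the proof of Lemma~\ref{lemcoroohshsls} being symmetric in the three coordinates) controls the second contribution by $\ll L(1,\chi_{-N})N^{1/2}(\log N)^{\rho(s)-A/2}$, which is exactly the advertised error term.

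For the first contribution, for each such $\b x$ define $m_k:=\gcd(x_i,x_j)$ whenever $\{i,j,k\}=\{1,2,3\}$, so $\max_i m_i\leq(\log N)^A$. The conditions~\eqref{eq:condmmm} on $\b m$ all follow from squarefreeness of $N$ together with $N\equiv 3\md 8$: if a prime $p$ divided two of the $m_i$'s it would divide all three $x_i$ and hence $p^2\mid N$, yielding pairwise coprimality of the $m_i$; the congruence $N\equiv 3\md 8$ forces all three $x_i$ to be odd (the only way squares mod $8$ sum to $3$), so each $m_i$ is odd; and if $p\mid m_k$ then $p\mid x_i,x_j$ but $p\nmid x_k$, so $N\equiv x_k^2\md {p^2}$ gives $\l(\frac{N}{p}\r)=1$. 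Pairwise coprimality of the $m_i$ shows $m_jm_k\mid x_i$, so we may write $x_i=m_jm_ky_i$ with $\sum_i(m_jm_ky_i)^2=N$, and the same squarefreeness argument gives $\gcd(y_i,y_j)=1$ for $i\neq j$ (and in fact also $\gcd(y_i,m_i)=1$, which we shall discard).

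Now applying $f(ab)\leq\tau(a)^s f(b)$ coordinate-wise gives $f(x_i)\leq\tau(m_j)^s\tau(m_k)^s f(y_i)$; forming the product $\prod_i f(x_i)^{\alpha_i}$ one checks that $\tau(m_i)$ occurs to the power $s(\alpha_j+\alpha_k)\leq 2s$, while pairwise coprimality of $\b y$ and multiplicativity of $f$ collapse $\prod_i f(y_i)^{\alpha_i}$ into $f(y_1^{\alpha_1}y_2^{\alpha_2}y_3^{\alpha_3})$. Summing over $\b y$ subject only to $\gcd(y_i,y_j)=1$ (which is a relaxation of the bijection conditions, so yields an upper bound) produces $\c R_{\b m}(N)$; the outer sum over $\b m$ satisfying~\eqref{eq:condmmm} with $\max m_i\leq(\log N)^A$ completes the proof. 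The only non-routine step is the careful verification of~\eqref{eq:condmmm}, where the interplay of squarefreeness and $N\equiv 3\md 8$ is essential; the rest is bookkeeping via the hypothesis on $f$.
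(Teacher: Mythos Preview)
Your argument is correct and follows essentially the same route as the paper: truncate the pairwise gcds via Lemma~\ref{lemcoroohshsls}, parametrize by $m_k=\gcd(x_i,x_j)$, verify~\eqref{eq:condmmm} using square-freeness of $N$ and $N\equiv 3\md 8$, substitute $x_i=m_jm_ky_i$, and apply $f(ab)\leq\tau(a)^s f(b)$ to separate the variables. Your observation that the exponent of $\tau(m_i)$ is actually $s(\alpha_j+\alpha_k)\leq 2s$ is slightly sharper than what the paper records, but both lead to the same stated bound.
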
\begin{proof}By our assumption $f \leq  \tau^s$  and Lemma~\ref{lemcoroohshsls}   we may write the sum over $\b x $ as  
$$ \sum_{\substack{ \b x \in  (\mathbb Z\setminus\{0\})^3,  x_1^2 + x_2^2 +x_3^2 =N \\ 
\gcd(x_i,x_j) \leq (\log N)^{A} \forall i\neq j  
 }} \prod_{i=1}^3f(x_i)^{\alpha_i}  +O(   L(1,\chi_{-N}) N^{1/2} (\log N)^{-A/2+\rho(s)} ) .$$ For $\{i,j,k\}=\{1,2,3\}$ we let $m_i=\gcd(x_j,x_k)$ 
so that  the $m_{i}$ are coprime in pairs due to  $\gcd(x_1,x_2,x_3)=1$ that can be inferred from 
the fact that $N$ is square-free and $N=\sum_i x_i^2$. Hence, letting $y_i: =x_i/(m_{j} m_{k} )$ we see that 
$m_{i}=\gcd(x_j,x_k)$ is equivalent to  $ 1=\gcd( m_k y_j,  m_j y_k )$.
We obtain 
$$\sum_{\substack{\b m\in \N^3 \\ \gcd(m_{i},m_{j})=1 \forall i\neq j  \\ m_i \leq (\log N)^{A} \forall i } }  
\sum_{\substack{ \b y: \sum_i  (m_{j} m_{k} y_i )^2=N \\  \gcd(y_i,  y_j)=1 \forall i\neq j
 } }  \prod_{i=1}^3f(m_{j} m_{k} y_i )^{\alpha_i}  
+O(   L(1,\chi_{-N}) N^{1/2} (\log N)^{-A/2+\rho(s)}).$$  We omitted the condition $\gcd(y_i,m_i)=1  $ as it is implied by 
the fact that $N$ is square-free and  a sum of integer multiples of 
$m_i^2$ and $y_i^2$. Our assumption $f(ab ) \leq  \tau(a)^s f(b)$   allows us to write   
$$\prod_{i=1}^3  f(m_{j} m_{k} y_i )^{\alpha_i}    \leq   \prod_{i=1}^3 \tau(m_j)^s\tau(m_k)^s f(  y_i )^{\alpha_i}
= f(  y_1^{\alpha_1} y_2^{\alpha_2} y_3^{\alpha_3})  \prod_{i = 1}^3 \tau(m_i)^{2s},
 $$ since the $y_i$ are pairwise coprime and $f$ is multiplicative. The condition that each prime divisor $p$ of $m_i$ must satisfy 
$(\frac{N}{p})=1$ comes from the fact that each $m_i$ divides two of the coefficients of $ \sum_i  (m_{j} m_{k} y_i )^2$ and is coprime to the third.
Finally, if one of the  $m_i$  is even, then $4$ divides $N-(m_jm_k y_i)^2$, which is impossible owing to $N\equiv 3 \md 4 $.
\end{proof}
\subsection{Level of distribution}\label{ss:lebvelfdisnv78} Throughout this subsection 
  $\b m$ is  a fixed vector in $\N^3$ satisfying~\eqref{eq:condmmm}.
For   positive integers $d, N$     define 
$$C_d(N):= \#\left\{\b y \in \Z^3:  \begin{array}{l}
(m_{2} m_{3} y_1 )^2+(m_{1} m_{3} y_2 )^2+(m_{1} m_{2} y_3)^2=N,\\
 \gcd(y_i,y_j)=1 \forall i\neq j , \ \ \ d \mid y_1y_2y_3 \end{array} \right\}
.$$ The main result  is Lemma~\ref{lemma:putting the pudding}; it 
gives  a level of distribution result for $C_d(N)$ that  will subsequently be fed into 
 Theorem~\ref{tMain} to bound  $\c R_\b m(N)$. 

We start with a sieving argument that deals with the coprimality of the $y_i$.
\begin{lemma}\label{lem:costanza bernini}Keep the setting of Theorem~\ref{thm:3sqrsmultiplc} and fix any $\delta\in (0,1/9)$.
For all $\b m $ as  in~\eqref{eq:condmmm} and all $d\in \mathbb N$  we have  $$C_d(N)=
\sum_{\substack{ \b d \in \N^3 , d=d_1d_2d_3 \\\gcd(d_i,d_j)=1 \forall i\neq  j\\\gcd(d_i,m_i ) =1 \forall i  }}
\sum_{\substack{ \b b \in \N^3, \max b_i \leq N^{\delta_i} \forall i
 \\\gcd(b_i,b_j)=1\forall i\neq  j   \\  \gcd(b_i,d_i   m_j)=1 \forall i\neq  j    } } 
 \mu(b_1) \mu(b_2) \mu(b_3)  C_{\b b  , d}(N) +O( N^{1/2-\delta/400 } L(1,\chi_{-N}) )
,$$ where  $ \delta_1=\delta/100,\delta_2=\delta/10,\delta_3=\delta$,   the quantity
 $ C_{\b b  , d}(N) $ is given by $$ \#\{\b t \in \Z^3:  
N= (m_{2} m_{3}  [d_1,b_2b_3] t_1 )^2+(m_{1} m_{3} [d_2,b_1 b_3] t_2 )^2+(m_{1} m_{2} [d_3,b_1 b_2] t_3 )^2  \}$$
and the implied constant depends at most on $\delta$.
\end{lemma}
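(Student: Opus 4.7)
The plan is to carry out a Möbius sieve for the pairwise coprimality of the $y_i$'s, reduce to a sum over $\b b$ satisfying all the stated coprimality constraints (crucially exploiting that $N$ is squarefree), reparametrise the surviving count, and finally control the truncation tail via Lemma~\ref{lemghfrommools}.

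First, since every $\b y$ counted by $C_d(N)$ has pairwise coprime coordinates, the condition $d \mid y_1 y_2 y_3$ induces the unique factorisation $d = d_1 d_2 d_3$ with $d_i := \gcd(d, y_i)$; pairwise coprimality of the $y_i$'s gives $\gcd(d_i, d_j) = 1$ for $i \neq j$, and the identity $\gcd(y_i, m_i) = 1$ used in the proof of Lemma~\ref{lem:trsnfmrcohenlenstr} yields $\gcd(d_i, m_i) = 1$. One then applies the Möbius identity
\[
\mathds 1\bigl(\gcd(y_i, y_j) = 1\ \forall i \neq j\bigr) = \sum_{\b b \in \N^3} \mu(b_1)\mu(b_2)\mu(b_3)\, \mathds 1\bigl(b_k \mid \gcd(y_i, y_j)\ \text{whenever}\ \{i,j,k\} = \{1,2,3\}\bigr),
\]
which is checked prime by prime by enumerating the four configurations of $\{i : p \mid y_i\}$.

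The crucial observation is that $N$ squarefree forces most terms of the Möbius sum to vanish. Indeed, if a prime $p$ were to divide all three $y_i$'s, then each squared term in $\sum_i(m_jm_ky_i)^2$ would be divisible by $p^2$, contradicting $p^2 \nmid N$. This shows that every $\b b$ with $p \mid b_i$ and $p \mid b_j$ for some $i \neq j$ contributes zero, since $b_i \mid y_j, y_k$ together with $b_j \mid y_i, y_k$ forces $p \mid y_1 y_2 y_3$; this yields the restriction $\gcd(b_i, b_j) = 1$. The same mechanism eliminates $\gcd(b_i, d_i) > 1$ (as $d_i \mid y_i$ combined with $b_i \mid y_j, y_k$ again makes $p$ divide every $y_i$). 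For $\gcd(b_i, m_j) > 1$ with $j \neq i$, writing $\{i,j,k\} = \{1,2,3\}$, the divisibilities $p \mid b_i \mid y_j, y_k$ and $p \mid m_j$ together with pairwise coprimality of the $m_l$'s force each of the three squared terms to be divisible by $p^2$, again contradicting the squarefreeness of $N$. Combining all three vanishing arguments reduces the Möbius sum to $\b b$ satisfying exactly the stated constraints. With these in force, $\gcd(b_j, b_k) = 1$ collapses $d_i \mid y_i$, $b_j \mid y_i$, $b_k \mid y_i$ into the single condition $[d_i, b_j b_k] \mid y_i$, and the substitution $y_i = [d_i, b_j b_k] t_i$ produces exactly $C_{\b b, d}(N)$.

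It remains to truncate at $b_i \leq N^{\delta_i}$. Writing $x_i := m_j m_k y_i$, the triple $(x_1, x_2, x_3)$ satisfies $x_1^2 + x_2^2 + x_3^2 = N$ and the condition $b_i \mid \gcd(y_j, y_k)$ becomes $b_i \mid \gcd(x_j, x_k)$. Applying Lemma~\ref{lemghfrommools} (with a sufficiently small exponent $s$, after bounding the other Möbius variables trivially) gives a partial tail of size $\ll L(1, \chi_{-N}) N^{1/2 - \delta_i/2}$ in each of the three directions, while the remaining two $b_{i'}$ variables stay within $N^{\delta_{i'}}$ and contribute only logarithmic overhead. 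The unequal choice $\delta_1 = \delta/100$, $\delta_2 = \delta/10$, $\delta_3 = \delta$ is dictated by the downstream application of Theorem~\ref{tMain}, and consolidating the three partial tails (absorbing the logarithmic overhead into the exponent) yields the claimed error $O(N^{1/2 - \delta/400} L(1, \chi_{-N}))$. The main obstacle will be this last step: the three ranges $b_i > N^{\delta_i}$ are very asymmetric and the smallest exponent $\delta_1 = \delta/100$ governs the dominant tail, so the bookkeeping that consolidates everything into a single $N^{1/2-\delta/400}$ saving requires care.
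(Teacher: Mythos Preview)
Your approach is the paper's: factor $d=d_1d_2d_3$ using pairwise coprimality of the $y_i$, run a M\"obius sieve over the $b_k$, use square-freeness of $N$ to kill the terms with unwanted $\b b$, and control the truncation tails via Lemma~\ref{lemghfrommools}. The only real slip is in the tail bookkeeping. You claim the already-truncated M\"obius variables ``contribute only logarithmic overhead''; in fact a truncated sum $\sum_{b_{i'}\leq N^{\delta_{i'}}}|\mu(b_{i'})|$ costs a factor $N^{\delta_{i'}}$, and this polynomial loss is precisely why the hierarchy $\delta_1=\delta/100<\delta_2=\delta/10<\delta_3=\delta$ is chosen (not for any downstream use of Theorem~\ref{tMain} as you suggest): when bounding the tail $b_2>N^{\delta_2}$ one has already truncated $b_1$ and must pay $N^{\delta_1}$, so the saving $N^{-\delta_2/2}$ has to absorb it, and similarly for $b_3$. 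With this correction the three tails come out as $\tau_3(d)N^{1/2-\delta/200}$, $\tau_3(d)N^{1/2+\delta/100-\delta/20}$ and $\tau_3(d)N^{1/2+\delta/100+\delta/10-\delta/2}$, and since $d\mid y_1y_2y_3$ gives $\tau_3(d)\ll N^{\delta/400}$, one obtains the stated error.

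You also skipped one small step: the count $C_{\b b,d}(N)$ as written allows $t_i=0$, whereas the original $\b y$ lie in $(\Z\setminus\{0\})^3$. The $\b t$ with some $t_i=0$ contribute $\ll \tau_3(d)N^{\delta_1+\delta_2+\delta_3}r_2(N)\ll N^{2\delta}$, which is acceptable thanks to $\delta<1/9$.
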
\begin{proof}Since $y_i$ are coprime in pairs in $C_d(N)$, we can write $d=d_1d_2d_3$ where $d_i\mid y_i$ and the 
$d_i$ are coprime in pairs. Then, $C_d(N)$ becomes $$\sum_{\substack{ \b d \in \N^3 , d=d_1d_2d_3 \\\gcd(d_i,m_id_j)=1 \forall i\neq  j }}
 \#\left\{\b y \in (\Z\setminus\{0\})^3:  \begin{array}{l}
(m_{2} m_{3} y_1 )^2+(m_{1} m_{3} y_2 )^2+(m_{1} m_{2} y_3)^2=N,\\
 \gcd(y_i,y_j)=1 \forall i\neq j , \ \ \ d_i \mid y_i \forall i \end{array} \right\}
  .$$ The condition $\gcd(d_i,m_i)=1$ comes   from the fact that 
  $N$ is square-free and  the   sum of squares equation.
 We now use the expression $\sum_{b_1 \mid (y_2,y_3)} \mu(b_1)$ to detect the coprimality of $y_2$ and $y_3$.
The contribution of $b_1>N^{\delta/100}$ will then be at most 
$$\tau_3(d) \sum_{b_1>N^{\delta/100} }  \# \{\b t \in \Z^3:  
t_1 ^2+t_2^2+t_3^2=N, b_1\mid (t_2,t_3)    \} \ll \tau_3(d) N^{1/2-\delta/200} L(1,\chi_{-N}) $$ 
by Lemma~\ref{lemcoroohshsls}. We have $d \leq N^3$ due to  $d\mid y_1y_2 y_3$, hence, 
 the bound $\tau_3(d) \ll   N^{\delta/400}$ shows that the contribution is $  \ll  N^{1/2-\delta/400} L(1,\chi_{-N}) $.
Next, we use  $\sum_{b_2 \mid (y_1,y_3)} \mu(b_2)$ to detect the coprimality of $y_1$ and $y_3$. The contribution of $b_2>N^{\delta/10}$
is $$\ll \tau_3(d) \sum_{\substack{ b_1\leq N^{\delta/100} \\ b_2>N^{\delta/10} } }  \# \{\b t \in \Z^3:  
t_1 ^2+t_2^2+t_3^2=N,  b_2\mid (y_1,y_3)    \} \ll \tau_3(d) N^{1/2+\delta/100 -\delta/20} L(1,\chi_{-N}) $$
 by Lemma~\ref{lemcoroohshsls}. This can be seen to be 
$ \ll  N^{1/2 -\delta/150} L(1,\chi_{-N}) $ as before. Finally, using  $\sum_{b_3 \mid (y_1,y_2)} \mu(b_3)$, we can see 
that the range $b_3> N^{\delta}$ contributes 
 $$\ll \tau_3(d) \sum_{\substack{ b_1\leq N^{\delta/100},  b_2\leq N^{\delta/10}
\\ b_3>N^{\delta}  } }  \# \{\b t \in \Z^3:   t_1 ^2+t_2^2+t_3^2=N, 
b_3\mid (t_1,t_2)    \} \ll   N^{1/2-\delta/10 } L(1,\chi_{-N}) .$$
 We thus obtain the expression claimed in the lemma. The conditions of the form   $\gcd(b_1, b_2 b_3 d_1 m_2m_3)=1$ in the lemma 
come from the fact that $N$ is  square-free. Finally, the vectors $\b t$ having $t_i=0$ for some $i$ 
contribute at most  $$ \ll \tau_3(d) N^{\delta_1+\delta_2+\delta_3} r_2(N)\ll N^{2\delta},$$ which is acceptable by the assumption $\delta<1/9$.
\end{proof}
We next apply Lemma~\ref{lemma:dukethm}.  Denote $$b=b_1b_2b_3, \ \ 
m=m_1m_2m_3
\ \ \textrm{ and } \ \ 
 \mathfrak C_{d}:= \prod_{\substack{ p\equiv 3 \md 4 \\ p\mid (d,N)  } } p .$$
\begin{lemma}\label{lem:Cesar Frank piano quintet in F minor}
Keep the setting of Lemma~\ref{lem:costanza bernini} and fix any $\varpi>0$.
For all $d\in \mathbb N$ and $\b m\in \N^3$ as in~\eqref{eq:condmmm} with the additional restriction $\max m_i \leq (\log N)^\varpi$
we have  $$C_d(N)=
 \frac{8}{\pi }  L(1,\chi_{-N})  N^{1/2}
M_1 M_2+ O( d^{12} N^{1/2+\max\{50\delta-1/30,-\delta/800  \}}(\log N)^{100\varpi }  L(1,\chi_{-N}) )
,$$ where the implied constant depends at most on $\delta$ and $\varpi$. Here 
$$ M_1=  \frac{ \mathds 1(2\nmid d )  }{d} 
\frac{  2^{\omega(m)}}{m^2}  
  \prod_{p\mid  m} \l( 1-\frac{1}{p} \l(\frac{-1}{p}\r) \r)
$$
and
\begin{align*} 
M_2&=\sum_{ \b b \in \N^3 } 
2^{\#\{p\mid b :p\nmid m \} } \frac{ \mu(b)  }{b^2} 
2^{\#\{p\mid d: p\nmid b m \}}  
\prod_{p\mid b, p\nmid m} \l( 1-\frac{1}{p} \l(\frac{-1}{p}\r) \r)
\frac{2^{\#\{p\mid (d,b )\}}}{2^{\#\{p\mid d: p\nmid bmN\}}}
\\ & \times 
 \gcd(d,b )
3 ^{\#\{p\mid d: p\nmid bm \}}
 2 ^{\#\{p\mid d , p\mid m , p\nmid b\}} 
 \prod_{ \substack{ p\mid d \\ p\nmid b m }  } 
\l(1     - \frac{1}{p} \l(\frac{-1}{p}\r)  \r)
\l( 1-\frac{1}{p} \l(\frac{-N}{p}\r) \r)
,\end{align*} 
where the sum is over  $\b b$ satisfying the further conditions   
$$ \mathfrak C_{d} \mid b_1 b_2 b_3 m , 
\gcd(b_i,2b_jm_j)=1\forall i\neq  j ,
$$ and $(\frac{N}{p})=1$ for all primes $ p\mid b_1 b_2 b_3  $ with $ p\nmid m$.
\end{lemma}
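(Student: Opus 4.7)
The plan is to begin from the identity supplied by Lemma~\ref{lem:costanza bernini} and insert the asymptotic of Lemma~\ref{lemma:dukethm} into each inner count $C_{\b b,d}(N)$. In the notation of Lemma~\ref{lemma:dukethm}, the coefficients of the relevant ternary form are $c_1=m_2m_3[d_1,b_2b_3]$, $c_2=m_1m_3[d_2,b_1b_3]$, $c_3=m_1m_2[d_3,b_1b_2]$, so that $c_1c_2c_3=m^2\prod_i[d_i,b_jb_k]$. Substituting gives
\[
C_d(N)=\frac{8}{\pi}L(1,\chi_{-N})N^{1/2}\sum_{\b d}\sum_{\b b}\mu(b_1)\mu(b_2)\mu(b_3)h_N(\b c)+\mathcal E,
\]
and the first task is to show that $\mathcal E$ is absorbed by the error claimed in the lemma. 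Since $d\mid d_1d_2d_3$ and $b_i\leq N^{\delta_i}$ with $\delta_1+\delta_2+\delta_3\leq 2\delta$, the sum over $\b b$ of $(c_1c_2c_3)^{12}$ is bounded by $d^{12}m^{24}N^{C\delta}$ for some absolute $C$; combining with the $O(d^{12}N^{1/2-1/30})$ error from Duke and the error $O(N^{1/2-\delta/400}L(1,\chi_{-N}))$ inherited from Lemma~\ref{lem:costanza bernini} (where $d^{12}$ must be paid to restore the outer sum over $\b d$) and using $m\leq(\log N)^\varpi$ produces the stated exponent $\max\{50\delta-1/30,-\delta/800\}$ together with the factor $(\log N)^{100\varpi}$.

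Next, one rewrites the main term as an Euler product. The function $h_N(\b c)$ from Lemma~\ref{lemma:dukethm} splits multiplicatively once one records, for each prime $p$, which of the $c_i$ it divides, and whether $p\mid N$. One partitions the primes $p\mid c_1c_2c_3$ into four types: (i) $p\mid m_i$ (so $p$ divides exactly two $c_j$'s, and~\eqref{eq:condmmm} forces $(N/p)=1$); (ii) $p\mid b_i$ with $p\nmid m$ (again $p$ divides exactly two $c_j$'s, and the constraint $(N/p)=1$ from the sum condition ensures the indicator in Lemma~\ref{lemma:dukethm} is active); (iii) $p\mid d_i$ with $p\nmid bm$ (so $p$ divides exactly one $c_j$, and the case-split according to $p\mid N$ produces the factor $2^{\mathds 1(p\mid N)}(1-\tfrac{1}{p}(\tfrac{-1}{p}))$); and (iv) common divisors (primes dividing both $d$ and $b$, or both $d$ and $m$), each handled by the same rule. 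The condition $\gcd(c_1,c_2,c_3)=1$ of~\eqref{qweeebok1} is automatic from the pairwise coprimality of the $d_i$, $b_i$, $m_i$ already enforced. The condition~\eqref{qweeebok2} for type-(iii) primes that also divide $N$ becomes the requirement $p\equiv 1\pmod 4$, but since $N\equiv 3\pmod 8$ and such $p$ must satisfy $(-N/p)=(-1/p)$, this case contributes $0$ unless $p\not\equiv 3\pmod 4$; collecting the forbidden primes yields precisely the divisibility condition $\mathfrak C_d\mid b_1b_2b_3m$ in the lemma.

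Executing this split at each type, and separating in the multiplicative expression of $h_N(\b c)$ the factors that depend only on $m$ from those that depend on $b$ and $d$, one obtains $h_N(\b c)=M_1\cdot N_{\b d,\b b}$ where $M_1$ is exactly the $m$-only piece displayed in the lemma (note the factor $1/d$ on $M_1$ comes from the $1/(c_1c_2c_3)$ in Lemma~\ref{lemma:dukethm} contributing $1/(m^2 d \cdot b^2 \cdot \gcd(d,b)^{-1})$ after extracting the $b$-part into $M_2$). Now one performs the sum over the ordered factorisations $d=d_1d_2d_3$ of $N_{\b d,\b b}$: since the local factor at a prime $p\mid d$ only depends on the type (i)--(iv) it lies in, the number of allowed ordered factorisations of $d$ contributes $3^{\#\{p\mid d:p\nmid bm\}}$ (three choices of $d_i$ per such prime) and $2^{\#\{p\mid d,\, p\mid m,\, p\nmid b\}}$ (two choices at primes in $m$). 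The resulting expression, together with $\mu(b_1)\mu(b_2)\mu(b_3)=\mu(b)$, assembles precisely into $M_2$.

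The principal obstacle will be the bookkeeping in this last step: every prime in the support of $c_1c_2c_3$ must be placed in the right cell of the partition of types, the indicator functions from~\eqref{qweeebok0}--\eqref{qweeebok2} need to be shown equivalent to the conditions on the sum in $M_2$ (in particular the emergence of $\mathfrak C_d$), and the counting of factorisations $d=d_1d_2d_3$ must be performed uniformly. None of this is conceptually hard, but one must be careful that each factor $2^{\omega(\cdot)}$, $(1-\tfrac{1}{p}(\tfrac{-1}{p}))$ and $(1-\tfrac{1}{p}(\tfrac{-N}{p}))$ appears with the correct exponent; once this is done, the lemma follows directly.
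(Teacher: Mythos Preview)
Your outline follows the paper's approach exactly: insert Lemma~\ref{lemma:dukethm} into the expression of Lemma~\ref{lem:costanza bernini}, control the Duke error summed over $\b b$ and $\b d$, classify primes according to whether they divide $m$, $b$, or $d$, and sum over the factorisations $d=d_1d_2d_3$ (the paper calls this sum $\mathfrak F(d)$). Two points, however, are missing or mis-stated.

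First, and most importantly, you never extend the sum over $\b b$ to all of $\N^3$. The expression inherited from Lemma~\ref{lem:costanza bernini} has $b_i\leq N^{\delta_i}$, whereas $M_2$ in the statement is an \emph{unrestricted} sum over $\b b\in\N^3$. The paper explicitly bounds the tail $\exists i:\ b_i>N^{\delta_i}$ by
\[
\ll L(1,\chi_{-N})\,N^{1/2}(\log\log N)^3\,\tau(d)\,6^{\omega(d)}\,d\sum_{\exists i:\,b_i>N^{\delta_i}}\frac{2^{\omega(b)}}{b^2}\ll L(1,\chi_{-N})\,N^{1/2}\,d^2\,N^{-\min_i\delta_i/2},
\]
and this is precisely what produces the $-\delta/800$ branch of the maximum in the error term (via $\min_i\delta_i=\delta/100>\delta/800$). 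Your account of the error attributes the $-\delta/800$ to the $O(N^{1/2-\delta/400}L(1,\chi_{-N}))$ term from Lemma~\ref{lem:costanza bernini} ``with $d^{12}$ paid to restore the outer sum over $\b d$''; that is not correct, since the Bernini error is a single term (not summed over $\b d$) and carries no $d$-dependence at all. It is simply absorbed because $N^{-\delta/400}\leq d^{12}N^{-\delta/800}$ trivially.

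Second, your count of factorisations $d=d_1d_2d_3$ is incomplete. At primes $p\mid(d,b)$ with $p\nmid m$, the coprimality constraints force two (not three) admissible choices of which $d_i$ receives $p$, \emph{and} the identity $[d_i,b_jb_k]=d_ib_jb_k/\gcd(d_i,b_jb_k)$ supplies an extra factor $p$ from the $1/(c_1c_2c_3)$ in $h_N(\b c)$. This is why $\mathfrak F(d)$ in the paper carries the factor $2^{\#\{p\mid(d,b)\}}\gcd(d,b)$, which reappears in $M_2$. Your type~(iv) (``common divisors \ldots\ each handled by the same rule'') glosses over exactly this, and without it the expression does not match $M_2$.
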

\begin{proof}We employ Lemma~\ref{lemma:dukethm} with $c_i=m_j m_k [d_i,b_j b_k]$
to estimate  $C_d(N)$ in   Lemma~\ref{lem:costanza bernini}. The error term is $$ \ll
\tau_3(d)d^{12} N^{1/2-1/30}(\log N)^{100\varpi } \prod_{i=1}^3  \sum_{  b \leq N^{\delta_i}} b^{24} 
 \ll
d^{12} N^{1/2-1/30+50\delta}(\log N)^{100\varpi } 
L(1,\chi_{-N}) $$ by Siegel's bound   and  $\tau_3(d)\ll N^{\delta-\delta_1-\delta_2}L(1,\chi_{-N})$ that is implied by $d\leq N^3$.

To deal with the main term let us recall that the  $m_i$ are pairwise coprime and use the 
 coprimality conditions on the $b_i,d_i$ to see that~\eqref{qweeebok1} is always met. 
Denote $b:=b_1b_2b_3$. Note that a prime  $p$ divides exactly two of the $c_i$ 
if and only if $p\mid bm$. In addition, $p$ divides exactly one of $c_i$
if and only if $p$ divides $d$ but not $bm$.
We get the main term  
$$  \frac{8}{\pi }  L(1,\chi_{-N})
N^{1/2}
\frac{ \mathds 1 (2\nmid m )}{m^2}\mathfrak K
 \prod_{p\mid m}\l( 1-\frac{1}{p} \l(\frac{-N}{p}\r) \r)
,$$
where $\mathfrak K$ is the sum 
\begin{align*}
& \Osum 
2^{\omega(m b  )}
\frac{ \mu(b_1) \mu(b_2) \mu(b_3)  }{(b_1b_2b_3)^2}
\prod_{\substack{p\mid b_1b_2b_3 \\p\nmid m }}\l( 1-\frac{1}{p} \l(\frac{-N}{p}\r) \r)
\frac{ \mathds 1(2\nmid d ) 2^{\#\{p\mid d: p\nmid b m  \}}  }{d}   \mathfrak F(d)
\\ \times & \mathds 1(p\mid (d, N), p\nmid b  m  \Rightarrow p\equiv 1 \md 4 )
 \prod_{ \substack{ p\mid d \\ p\nmid b  m }  }
\l(1     - \frac{1}{p} \l(\frac{-1}{p}\r)  \r)
\l( 1-\frac{1}{p} \l(\frac{-N}{p}\r) \r)
\l( \frac{1 }{2}\r)^{\mathds 1 (p\nmid N ) }
\end{align*} with $\Osum$ taken over $\b b \in \N^3$ satisfying 
$b_i \leq N^{\delta_i}$ for all $i$,
$\gcd(b_i,2b_jm_j)=1$ for all  $i\neq  j$, and, 
with the further property that  each prime divisor $p$ of $b$ that does not divide $m$ must satisfy  
$(\frac{N}{p})=1$. The multiplicative function $ \mathfrak F(d)$ is defined as  $$   \sum_{\substack{ \b d \in \N^3 , d=d_1d_2d_3 }}
\gcd(d_1,b_2b_3)\gcd(d_2,b_1 b_3)\gcd(d_3,b_1 b_2),$$ where the sum is subject to 
$\gcd(d_i,d_jm_i b_i)=1$ for all $i\neq j$. To analyse it at prime powers $p^\alpha$   we use 
 that   $d_i$ are coprime to infer  that  $ \mathfrak F(p^\alpha)$ equals $$\gcd(p^\alpha,b_2b_3) \mathds 1 (p\nmid b_1 m_1 )
+\gcd(p^\alpha,b_1 b_3) \mathds 1 (p\nmid b_2 m_2 )
+\gcd(p^\alpha,b_1 b_2) \mathds 1 (p\nmid b_3 m_3 )
.$$ Since   $b_i$ are coprime in pairs and square-free we see that if $p\mid b_1b_2 b_3$ then the above becomes 
$2p$ because $\gcd(b_i, m_j  )=1$ for all $i\neq j $.   If $p\nmid b_1b_2 b_3 m_1 m_2 m_3 $ then the sum becomes $3$.
If  $p\nmid b_1b_2 b_3   $ and $p\mid   m_1 m_2 m_3 $  then it becomes $2$.
Thus, $\mathfrak F(d)$ equals   $$  2^{\#\{p\mid (b,d)\}} \gcd(b,d)
3^{\#\{p\mid d:p\nmid bm \}} 2^{\#\{p\mid (d,m):p\nmid b \}}
.$$ Using~\eqref{albinoni oboe concertos kick butt, innit bruv. WHAT DID YA SAY?} we see that  
the contribution of $\b b $ with $b_i> N^\delta_i$ for some $i$ is  $$
\ll  L(1,\chi_{-N}) N^{1/2}(\log \log N)^3   \tau(d) 6^{\omega(d)}  d \sum_{\substack{ \b b \in \N^3 \\\exists i: b_i >N^{\delta_i} }} \frac{2^{\omega(b_1b_2b_3)}}{(b_1b_2b_3)^2}
\ll   L(1,\chi_{-N}) N^{1/2}    d^2 N^{-\frac{1}{2\min \delta_i}} ,$$ which is acceptable since $\min \delta_i > \delta/800$.
To conclude the proof we note that the condition $ p\mid d,p\mid N, p\nmid b m \Rightarrow p\equiv 1 \md 4  $   is equivalent to 
$ \mathfrak C_{d} \mid bm$. \end{proof} 
Finally, we simplify the main term in  Lemma~\ref{lem:Cesar Frank piano quintet in F minor}.
The error term will be  obtained by taking $\delta=80/120003$. Denote for a prime $p$, $$ c_p=1-\frac{1}{p}\l(\frac{-1}{p}\r).$$
\begin{lemma}\label{lemma:putting the pudding} Fix any $\varpi>0$.
For all $ \b m \in \N^3$ as in~\eqref{eq:condmmm}   with $\max m_i \leq (\log N)^{\varpi} $, all
square-free positive integers  $N\equiv 3 \md 8 $ and all   $d\in \N$  we have  $$
C_d(N)=M(N)g_N(d) +O(  d^{12} N^{1/2 -1/1200030
 }(\log N)^{100\varpi }  L(1,\chi_{-N}) )
,$$ where the implied constant depends at most on $\varpi$. Further,   $$M(N)
= \frac{8}{\pi}  L(1,\chi_{-N})   N^{1/2}  \frac{ 2^{\omega(m_1m_2m_3)}}{(m_1m_2m_3)^2}  
\prod_{p\mid m_1m_2m_3 } c_p \l(1-\frac{1}{p^2}\r)
\prod_{ \substack{   p\nmid m_1m_2m_3 \\ (\frac{N}{p})=1 } } \l(1-\frac{6c_p}{p^2}  \r) 
$$ and  \begin{align*} g_N(d)= &
\mathds 1 (p\mid (d, N) \Rightarrow p\equiv 1 \md 4  )
\frac{ \mathds 1(2\nmid d )  }{d} 
2^{\#\{p\mid d: p\mid  m_1m_2m_3N\}} 3 ^{\#\{p\mid d: p\nmid m_1m_2m_3 \}}
\\ \times  & \prod_{\substack{  p\mid m_1m_2m_3 \\ p\mid d } }  \l(1+\frac{1}{p}\r)^{-1}
\prod_{ \substack{ p\nmid  m_1m_2m_3 \\p\mid d,  (\frac{N}{p})=1 } }
\frac{1}{\l(1-\frac{6c_p }{p^2}\r)\l(1-\frac{4}{pc_p}\r)}
 \prod_{ \substack{ p\mid d\\ p\nmid m_1m_2m_3     }  }  c_p\l(1-\frac{1}{p} \l(\frac{-N}{p} \r)\r)
.\end{align*} \end{lemma}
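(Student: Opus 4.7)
The proof proceeds in two stages: first, optimising the error term from Lemma~\ref{lem:Cesar Frank piano quintet in F minor}, and then expanding the main term as an Euler product and regrouping the resulting local factors.

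For the error, we specialise the free parameter $\delta$ from Lemmas~\ref{lem:costanza bernini}--\ref{lem:Cesar Frank piano quintet in F minor} to $\delta = 80/120003$. A direct check shows that with this choice both $50\delta - 1/30$ and $-\delta/800$ equal $-1/1200030$, so the maximum of the two exponents in the error term of Lemma~\ref{lem:Cesar Frank piano quintet in F minor} is exactly $-1/1200030$, yielding the claimed error $d^{12} N^{1/2 - 1/1200030}(\log N)^{100\varpi} L(1,\chi_{-N})$.

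For the main term $\frac{8}{\pi} L(1,\chi_{-N}) N^{1/2} M_1 M_2$, the strategy is to exhibit $M_2$ as an Euler product. All pieces of the summand in $M_2$ - the factors $\mu(b)/b^2$, the various exponentials $2^{\#\{\cdot\}}$, $3^{\#\{\cdot\}}$, the $c_p$ factors, and the quantity $\gcd(d,b)$ - are multiplicative in the square-free variable $b = b_1 b_2 b_3$, and the side conditions ($\mathfrak{C}_d \mid b m$, $\gcd(b_i,2b_jm_j)=1$, $(\tfrac{N}{p})=1$ for $p \mid b$, $p \nmid m$) are all local at each prime. Hence $M_2$ factorises as $\prod_p L_p$, and we compute $L_p$ by summing the two possibilities $p \nmid b$ and $p \mid b$. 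In the second case, the pairwise coprimality of $b_1, b_2, b_3$ combined with $\gcd(b_i,m_j)=1$ for $i \neq j$ forces $p$ to divide exactly one $b_i$, with $3$ admissible indices when $p \nmid m$ and a single admissible index when $p \mid m$. The constraint $\mathfrak{C}_d \mid b m$ is also transparent prime-by-prime: it eliminates the branch $p \nmid b$ precisely when $p \mid d$, $p \mid N$, $p \nmid m$ and $p \equiv 3 \md 4$.

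The local computation then branches on the indicators $\mathds{1}[p \mid m]$, $\mathds{1}[p \mid d]$, $\mathds{1}[p \mid N]$, and on the values of $\left(\tfrac{-1}{p}\right)$ and $\left(\tfrac{N}{p}\right)$. After collecting the contributions in each of the resulting subcases and multiplying the product $\prod_p L_p$ by $M_1$, the primes $p \nmid d$ yield the $d$-independent constant $M(N)$ - specifically, the primes $p \mid m$ produce the factor $c_p(1 - 1/p^2)$ (from $c_p$ in $M_1$ and $(1-1/p^2)$ in $L_p$) and the primes $p \nmid m$, $\left(\tfrac{N}{p}\right) = 1$ produce $(1 - 6c_p/p^2)$ - while the primes $p \mid d$ combine into $g_N(d)$.

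The only real obstacle is the combinatorial bookkeeping in the case analysis. One must verify that the overlapping indicators $2^{\#\{p\mid d : p \nmid b m\}}$, $2^{\#\{p \mid (d,b)\}}$, $2^{-\#\{p\mid d: p \nmid b m N\}}$, $2^{\#\{p\mid d, p \mid m, p \nmid b\}}$, and the analogous power of $3$, combine consistently with the $c_p$, $\left(1 - \frac{1}{p}\left(\tfrac{-N}{p}\right)\right)$ and $(1+1/p)^{-1}$ factors prescribed by the target formulas for $M(N)$ and $g_N(d)$. This is a mechanical check across roughly ten subcases (indexed by the various combinations of $p\mid m$, $p \mid d$, $p \mid N$ and the Legendre values), but no new analytic input is required.
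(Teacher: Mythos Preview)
Your proposal is correct and follows essentially the same route as the paper. The paper organises the computation slightly differently---it first collapses the triple sum over $\mathbf{b}$ to a single sum over $b=b_1b_2b_3$ via the counting function $\mathfrak{G}(b)$ (which gives your multiplicity $3$ or $1$), then shows that $\mathfrak{C}_d\neq 1$ forces the whole sum to vanish, and finally factors $b=b_0b_1$ with $b_0\mid m$ and $\gcd(b_1,m)=1$ to split $M_3=M_4M_5$ into two explicit Euler products---but the substance is identical to your direct prime-by-prime evaluation of $L_p$. One small point: in the case $p\mid d$, $p\mid N$, $p\nmid m$, $p\equiv 3\pmod 4$, it is not only the branch $p\nmid b$ that is killed (by $\mathfrak{C}_d\mid bm$) but also the branch $p\mid b$ (by the side condition $(\tfrac{N}{p})=1$), so that $L_p=0$ and the indicator in $g_N(d)$ emerges; you should make this explicit.
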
 \begin{proof}
Let $\mathfrak G(b)$ be the number of $\b b \in \N^3$   with 
$b= b_1b_2 b_3 $ and $\gcd(b_i,b_jm_j)=1$  for all $i\neq  j   $.
Using $2^{\#\{p\mid d: p\nmid b m \}}  2 ^{\#\{p\mid d , p\mid m , p\nmid b\}} 2^{\#\{p\mid (d,b )\}}=2^{\omega(d)}$ we can write $$M_2=
M_3 3 ^{\#\{p\mid d: p\nmid m \}}
\frac{2^{\omega(d) } }{2^{\#\{p\mid d: p\nmid  mN\}}}
\prod_{ \substack{ p\mid d, p\nmid m     }  }  c_p \l( 1-\frac{1}{p} \l(\frac{-N}{p}\r) \r)
,$$where $M_3$ is given by 
$$\sum_{\substack{   b \in \N,  2\nmid  b, \mathfrak C_{d} \mid b m
\\ p\mid b , p\nmid m  \Rightarrow (\frac{N}{p})=1   } } 
2^{\#\{p\mid b :p\nmid m \} } \frac{ \mu(b)   \gcd(d,b ) }{b^2} 
 \frac{ 2^{\#\{p\mid b: p\nmid  mN, p\mid d \}}  }{ 3 ^{\#\{p\mid b: p\nmid m, p\mid d  \}}  }
\mathfrak G(b)  
 \prod_{ \substack{p\mid b, p\nmid m  \\ p\mid d    }  } c_p^{-2}
 \prod_{p\mid b, p\nmid m} c_p
.$$

For a prime $p$ we have $\mathfrak G(p)=\mathds 1 (p\nmid m_1m_2)
+\mathds 1 (p\nmid m_1m_3)+\mathds 1 (p\nmid m_2 m_3)$. Since the $m_i$ are coprime in pairs, $\mathfrak G(p)$ becomes 
 $1$  or $3$ according to whether 
 $p$ divides $m$ or not. 
Hence,  $\mathfrak G(b)=3^{\#\{p\mid b: p\nmid m\}}$ for all square-free $b$, thus, 
$M_3$ can be written as $$
 \sum_{\substack{   b \in \N 
, 2\nmid  b,  \mathfrak C_{d} \mid bm
\\  p\mid b , p\nmid m  \Rightarrow (\frac{N}{p})=1  
  } }  6^{\#\{p\mid b :p\nmid m \} } \frac{ \mu(b)   \gcd(d,b ) }{b^2} 
 \frac{ 2^{\#\{p\mid b: p\nmid  mN, p\mid d \}}  }{ 3 ^{\#\{p\mid b: p\nmid m, p\mid d  \}}  }
 \prod_{ \substack{p\mid b, p\nmid m  \\ p\mid d    }  }  c_p^{-2}
  \prod_{p\mid b, p\nmid m} c_p
.$$Let us show  that if the sum over $b$ is non-empty then  $ \mathfrak C_{d}=1$. To see that, assume there is a prime  
$p\mid  \mathfrak C_{d}$. Then the condition $p\mid \mathfrak C_{d} \mid bm $ implies that  $p\mid m $ or $p\nmid m$ and $p\mid b $.
In the first case, the condition present in $M_1$ shows that $(\frac{N}{p})=1$, which violates the condition $p\mid \mathfrak C_{d} \mid N$.
In the second case, we have $p\nmid m$ and $p\mid b $, hence, the condition in the sum over $b$ shows that $(\frac{N}{p})=1$, which is a contradiction.

Now, factor the square-free $b$ as $b_0b_1$, where $b_0\mid m$ and $b_1$ is coprime to $m$. We can thus write $M_3=M_4 M_5$, where  
$$ M_4=   \sum_{   b_0  \mid m } 
\frac{ \mu(b_0  )   \gcd(d,b_0   ) }{b_0^2 }  =
\prod_{\substack{  p\mid m \\ p\mid d } } \l(1-\frac{1}{p}\r)
\prod_{\substack{   p\mid m \\ p\nmid d}} \l(1-\frac{1}{p^2}\r)
$$   and $M_5$ is given by 
\begin{align*}  \sum_{\substack{    b_1 \in \N,
\gcd(b_1,2m)=1
\\ 
p\mid   b_1   \Rightarrow (\frac{N}{p})=1  
} } 
&  6^{\#\{p\mid b_1 \} } \frac{ \mu( b_1)   \gcd(d,  b_1 ) }{ b_1^2} 
\l(\frac{2}{3}\r)^{\#\{p\mid b_1:  p\mid d \}}
 \prod_{p\mid b_1} c_p
 \prod_{ \substack{p\mid b_1  \\ p\mid d   }  }c_p^{-2}
 ,\end{align*} where we used the conditions   $b_0 \mid m$ and $\gcd(b_1, m)=1$ to infer that $b_0,b_1$ are coprime and thus split $\mu(b_0 b_1)$.
The Euler product for $M_5$ equals $$
\prod_{ \substack{  p\nmid 2m \\ (\frac{N}{p})=1 } } 
\l(1-\frac{6c_p}{p^2} \r)
\prod_{ \substack{ p\mid d, p\nmid 2m \\ (\frac{N}{p})=1 } }  
\frac{1}{\l(1-\frac{6c_p}{p^2} \r)\l(1-\frac{4}{p c_p} \r)},$$  which concludes the proof.
\end{proof}

\subsection{The proof of Theorem~\ref{thm:3sqrsmultiplc}}\label{ss:finalproofthrmes123} 
We  apply Theorem~\ref{tMain} with $\mathcal A$ being the set of vectors $\b y \in (\Z\setminus \{0\} )^3 $ satisfying 
 $\gcd(y_i,  y_j)=1$ for all  $i\neq j $ and  $c_a=|y_1y_2y_3|$. Further, we let  $T=N$ and 
 $  \chi_N(a)=\mathds 1 _{\{N\}} ((m_{2} m_{3} y_1 )^2+(m_{1} m_{3} y_2 )^2+(m_{1} m_{2} y_3)^2) $.
To verify  assumption~\eqref{eEquiProgression}  we   use Lemma~\ref{lemma:putting the pudding}. Note that 
 $2^{\omega(m)} \prod_{p\mid m}(1-1/p)\geq 1$, hence  \begin{equation}\label{eq:finallyending}\frac{N^{1/2} L(1,\chi_{-N})}{(\log N)^{6\varpi}} \leq
\frac{N^{1/2} L(1,\chi_{-N})}{(m_1m_2m_3)^2} \ll
 M\ll N^{1/2} L(1,\chi_{-N})\end{equation} with absolute implied constants.
Fix any strictly positive constants $\xi$ and $ \theta$ satisfying $12\theta+\xi <1/600015$.
For any  positive integer $d\leq M^\theta$, the error term in Lemma~\ref{lemma:putting the pudding} is  
 $$ \ll M^{12\theta}  N^{1/2} L(1,\chi_{-N}) N^{ -1/1200030}(\log N)^{100\varpi }
 \ll M^{1+12\theta}   N^{ -1/1200030}(\log N)^{106\varpi }  $$ by the lower bound~\eqref{eq:finallyending}.
Using the upper bound of the same inequality we obtain 
$$  \ll M^{1 -1/600015+12\theta} L(1,\chi_{-N})^{1/2400060} (\log N)^{106\varpi }  
\ll M^{1 -1/600015+12\theta}   (\log N)^{1+106\varpi }   $$
by the bound     $L(1,\chi_{-N}) \ll \log N$. The error term is $O(M^{1-\xi})$ as we have chosen $\xi$ so that $12\theta+\xi <1/600015$.
This verifies  assumption~\eqref{eEquiProgression} of Theorem~\ref{tMain}. The remaining assumptions 
are easily seen to hold   since the function $g_N$ in Lemma~\ref{lemma:putting the pudding}  satisfies  
$p^e g_N(p^e)=O(1)$ for all $e\geq 1$ and primes $p$ with an absolute implied constant.
Hence, one has for all $\b m $ with $\max m_i \leq (\log N)^A$ 
$$   
\sum_{\substack{ \b y\in (\Z\setminus\{0\})^3: \sum_i  (m_{j} m_{k} y_i )^2=N \\  \gcd(y_i,  y_j)=1 \forall i\neq j
   } }   f(  |y_1y_2y_3|)    \ll  M(N) T(M(N) ) ,$$ where 
$$T(y)=\prod_{\substack{ 1\ll p\leq y   } } (1- g_N(p) )  \sum_{\substack{ a\leq y  }} f(a)  g_N(a)  $$  and 
the implied constant depends at most on $s$ and $\varpi$.    
We have $T(y)\asymp \exp(S(y))$ by Lemma~\ref{monteverdi frammenti}, where $ S(y)$ is  $$ 
6\sum_{\substack{ p\mid N  \\ p\equiv 1 \md 4    }} \frac{ f(p) -1}{p} 
 +3\sum_{\substack{ p\leq y \\  p\nmid N      }} \frac{ f(p) -1}{p} 
+O\Big(1+\sum_{ p\mid m_1m_2m_3   } \frac{1}{p}+\sum_{\substack{ p\mid N \\ p>y    }} \frac{1}{p} \Big)
.$$ The main term is   $$  3 \sum_{ p\mid N     } \l(\frac{-1}{p} \r) \frac{ f(p) -1}{p} 
 +3\sum_{ p\leq y     } \frac{ f(p) -1}{p}  +O\Big(  \sum_{\substack{ p\mid N \\ p>y    }} \frac{1}{p} \Big) .$$
The sum over $p\mid N, p>y$ is $\ll \omega(N)/y \ll (\log N)/y $. Thus,
with $c_f(N)$   as in~\eqref{def:obscura valediction},
 there exists a  positive constant $\nu =\nu(s)$ such that  for all $y\geq \log N$
one has  $$\prod_{p\mid m_1m_2m_3} \l(1+\frac{1}{p}\r)^{-\nu} \ll T(y) c_f(y)^{-3}    \exp\l(-3 \sum_{p\leq y} \frac{f(p)-1}{p} \r)
\ll \prod_{p\mid m_1m_2m_3} \l(1+\frac{1}{p}\r)^{\nu} .$$  
 Injecting the upper bound into Lemma~\ref{lem:trsnfmrcohenlenstr} and using that $M(N) \leq N$ we get  
\begin{align*} 
\sum_{ \substack{ \b x \in  (\Z\setminus \{0\} )^3 \\ x_1^2 + x_2^2 +x_3^2 =N  }} \prod_{i=1}^3 f(|x_i|)  \ll 
\frac{  L(1,\chi_{-N}) N^{1/2}}{(\log N)^{A/2-\rho(s)}}
+ L(1,\chi_{-N}) N^{1/2}  c_f(N)^{3}    \exp\l(3 \sum_{p\leq N} \frac{f(p)-1}{p} \r)
 \c S\end{align*} 
where 
$$
\c S=\sum_{\substack{\b m\in \N^3, \eqref{eq:condmmm}  \\ \max m_i \leq (\log N)^A  } }  \prod_{i=1}^3  \frac{\tau(m_i) ^{2s}}{m_i^2} 
\prod_{p\mid m_i} \l(1+\frac{1}{p}\r)^{\nu}.
$$ 
Since $\prod_{p\mid m} (1+1/p) \leq \tau(m)$ we can see that $\c S$ is bounded. 
Enlarging the value of $A$   allows  the logarithmic exponent $A/2-\rho(s)$ to exceed any given number 
and it thus completes the proof of the upper bound in Theorem~\ref{thm:3sqrsmultiplc}  when each $\alpha_i$ is $1$.

To prove the lower bound  in Theorem~\ref{thm:3sqrsmultiplc}  we   note that 
$$ \sum_{ \substack{ \b x \in  (\Z\setminus \{0\} )^3 \\ x_1^2 + x_2^2 +x_3^2 =N  }} \prod_{i=1}^3 f(|x_i|) 
 \geq 
\sum_{ \substack{ \b x \in  (\Z\setminus \{0\} )^3, \ \sum_i x_i^2=N
\\
\gcd(x_i,x_j)=1 \forall i\neq j    }} \prod_{i=1}^3 f(|x_i|) 
$$ and apply    Theorem~\ref{thm:lower} to estimate the right-hand side sum. This has a level-of-distribution assumption  that can be verified using the case 
 $m_1=m_2=m_3=1$ of Lemma~\ref{lemma:putting the pudding}. 
The residual stages in the proof are indistinguishable to those for the upper bound.

\subsection{General Diophantine equations} \label{s:aplntratinpts}
The proof of Theorem~\ref{thm:spitfire} is based on an application of Theorem~\ref{tMain}. 
This has specific assumptions; we start by verifying the ones related to the level of distribution in Lemma~\ref{lem:birlevel} 
and proceed by verifying the ones related to  the growth of the sieve density function in Lemmas~\ref{lem:30thousandfeet}-\ref{lem:deathofalgebra}.

\begin{lemma}\label{lem:bareboneschocolate}Let $F\in \Z[x_0,\ldots, x_n]$ be as in Theorem~\ref{thm:spitfire}.
Then for every $0\leq i \leq n $ we have $\#\{   \b x \in (\Z\cap [-B,B])^{n+1}:  F(\b x)=0, x_i=0 \}\ll B^{n-d}$, where the implied constant 
depends only on $i$ and $F$. \end{lemma}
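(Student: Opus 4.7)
The plan is to reduce the estimate to an application of Birch's circle method for the restricted form
\[ F_i(y_0,\ldots,y_{i-1},y_{i+1},\ldots,y_n) := F(y_0,\ldots,y_{i-1},0,y_{i+1},\ldots,y_n), \]
a homogeneous polynomial of degree $d$ in $n$ variables. Setting $x_i=0$ immediately identifies
\[ \#\{\b x \in (\Z \cap [-B,B])^{n+1} : F(\b x)=0,\ x_i=0\} = \#\{\b y \in (\Z \cap [-B,B])^n : F_i(\b y)=0\}, \]
so it suffices to prove $\ll B^{n-d}$ for the right-hand side. Observe that $F_i$ is not identically zero: if $x_i\mid F$, writing $F = x_i G$ with $\deg G = d-1 \geq 1$, the singular locus of $F$ would contain the positive-dimensional variety $\{x_i = 0, G = 0\}$ (for $n$ in our range), contradicting the smoothness of $F$.

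The key step is to bound the affine singular locus $\Sigma \subset \A^n$ of $F_i$, and I claim $\dim \Sigma \leq 1$. Let
\[ V := \{\b y \in \A^{n+1} : \partial F/\partial y_j(\b y) = 0 \text{ for every } j \neq i\}, \]
a cone cut out by $n$ homogeneous equations. I first note that $\Sigma$ coincides with $V \cap \{y_i = 0\}$: the defining equations of $\Sigma$ in $\A^n \cong \{y_i = 0\}$ are $F_i = 0$ together with the restrictions to $\{y_i=0\}$ of the partials $\partial F/\partial y_j$ for $j\neq i$, and Euler's identity
\[ dF(\b y) = \sum_{j=0}^n y_j \, \partial F/\partial y_j(\b y) \]
forces $F(\b y) = 0$ automatically at any $\b y \in V \cap \{y_i = 0\}$, rendering the equation $F_i = 0$ redundant. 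To bound $\dim V$, suppose some component of $V$ had dimension $\geq 2$; then its intersection with the hypersurface $\{\partial F/\partial y_i = 0\}$ would have dimension $\geq 1$. But this intersection lies inside the common zero locus of all $n+1$ partials of $F$, which equals $\{\b 0\}$ by the smoothness of $F$. Hence $\dim V \leq 1$ and therefore $\dim \Sigma \leq 1$.

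With this singular-locus bound in hand, Birch's Weyl-type estimate from the circle method applied to $F_i$ yields
\[ \#\{\b y \in (\Z \cap [-B,B])^n : F_i(\b y) = 0\} \ll B^{n-d} \]
as soon as $n - \dim \Sigma > (d-1) 2^d$. Our hypothesis $n \geq 4 + (d-1) 2^d$ gives $n - \dim \Sigma \geq n - 1 \geq 3 + (d-1) 2^d$, so the Birch condition is comfortably satisfied and the lemma follows. The main obstacle is precisely the bound $\dim \Sigma \leq 1$; the Euler-identity trick replacing $\Sigma$ by a slice of the cone $V$, combined with the smoothness of $F$ forcing its gradient to be nowhere vanishing off the origin, is what makes the dimension count go through.
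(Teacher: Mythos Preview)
Your proof is correct and follows essentially the same strategy as the paper: restrict to $F_i$, verify it is nonzero of degree $d$, check Birch's rank condition, and invoke Birch's asymptotic for an $O(B^{n-d})$ count. The only difference is that where the paper quotes \cite[Lemma~3.1]{MR3867326} to obtain $\mathfrak B(F_i)\geq \mathfrak B(F)-2=n-1$, you prove the equivalent bound $\dim\Sigma\leq 1$ directly via Euler's identity and an intersection argument with the remaining partial $\partial F/\partial y_i$; this is a clean self-contained substitute for the cited lemma.
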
\begin{proof} It is necessary to 
recall the   definition of the Birch rank $\mathfrak B(g)$ of a  polynomial $g\in \Z[x_0,x_1,\ldots, x_m]$, where $m\in \mathbb N$.
Denoting   the homogeneous part of $g$ by $g^\flat$, the number $\mathfrak B(g)$ is defined as  the codimension of the affine variety in $\mathbb C^{m+1}$
 given by $\nabla g^\flat(\b x)=0$. Note that $\mathfrak B(g)=m+1$ when $g^\flat$ is smooth. Returning to the proof of our theorem we note that 
setting $x_i=0$ in the polynomial $F(\b x)$ will  produce  a homogeneous polynomial $F_i$ in at most $n-1$ variables.
We claim that $F_i$ will have degree $d$. If not, then   $F_i$ must vanish identically, which can only happen when each monomial of $F_i$ contains $x_i$;
hence, $x_i$ would divide $F(\b x )$ and this would contradict the assumed smoothness of $F$.

By~\cite[Lemma~3.1]{MR3867326} one has $\mathfrak{B}(F_i)\geq \mathfrak{B}(F)-2$, since 
in the notation of~\cite{MR3867326}  one has $|\b j|_1=1$ and $R=1$. Recalling that $F$ is smooth one sees that $\mathfrak{B}(F)=n+1$.
Hence,  $$ \mathfrak{B}(F_i) \geq  n-1 \geq (d-1) 2^{d-1}=(\deg(F_i)-1) 2^{\deg(F_i)-1} .$$ Hence, $F_i$
satisfies the assumption on the number of variables for Birch's work~\cite{MR150129}. In particular,~\cite[Equation~(4), page 260]{MR150129}
applied to $F_i$ gives 
$$  \#\{   \b x \in (\Z\cap [-B,B])^{n}:  F(x_0,x_1,\ldots, x_{i-1}, 0,x_{i+1}, \ldots, x_n )=0 \}  \ll B^{n-\deg(F_i)}= B^{n-d},
$$ which is sufficient. 
\end{proof}

For a prime $p$ define 
$$
\sigma_p:= \lim_{m\to+\infty }  \frac{ \#\{ \mathbf x \in (\Z/p^m\Z)^{n+1} :F(\mathbf x)\equiv 0 \md {p^m}    \}}{p^{mn}}.
$$   
Combining~\cite[Equation~(20), page 256]{MR150129} with~\cite[Lemma~7.1]{MR150129}
we see  that   the limit converges and,  furthermore, 
\begin{equation}\label{eq:renfrewstreet}
\sigma_p=1+O(p^{-1-\lambda})\end{equation} for some positive $\lambda=\lambda(F)$. The corresponding density over $\mathbb R$ is given by 
$$\sigma_\infty:=\int_{-\infty}^{+\infty} 
 \int_{\b x \in [-1,1]^{n+1} } \exp (2 \pi i \gamma F(\b x ) ) \mathrm d \b x \mathrm d \gamma
.$$ It converges due to~\cite[Lemma~5.2]{MR150129}. Finally, we let 
$$\sigma(F):=\sigma_\infty\prod_p\sigma_p.$$

\begin{lemma}\label{lem:birlevel} Keep the assumptions of 
Theorem~\ref{thm:spitfire} and assume that $F=0$ has a $\Q$-point. 
Then there exist positive constants $\Xi, \beta$, both of which depend on $F$, such that 
for all $q\in \mathbb N$ and $B\geq 1 $ with 
$q\leq B^\Xi$  one has $$\#\{   \b x \in (\Z\setminus\{0\})^{n+1}: \max |x_i|\leq B , F(\b x)=0, q \mid x_0 \cdots x_n  \}
=\sigma(F) B^{n+1-d} (h_F (q)+O(B^{-\beta})), $$ where  the function $h_F:\N \to [0,\infty)$ is  defined by 
$$h_F(q)=\prod_{p\mid q} 
 \frac{ 1}{ \sigma_p }
\lim_{m\to+\infty } 
\frac{ \#\left\{ \mathbf x \in (\Z/p^m\Z)^{n+1} :p^m\mid F(\mathbf x) , p^{v_p(q)} \mid x_0 \cdots x_n   \right \}}{p^{mn}}
$$ and 
 the implied constant is independent of $q$.
\end{lemma}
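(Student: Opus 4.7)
The plan is to detect the condition $q \mid x_0 \cdots x_n$ by a congruence decomposition modulo $q$ and then apply a quantitative version of Birch's circle method uniform in the modulus. Since whether $q \mid x_0 \cdots x_n$ is determined by the residue class $\mathbf{x} \bmod q$, I would first write
\begin{equation*}
\#\{\mathbf{x} \in \mathbb{Z}^{n+1} : \max|x_i| \leq B,\ F(\mathbf{x})=0,\ q \mid x_0 \cdots x_n\}
= \sum_{\substack{\mathbf{r} \in (\mathbb{Z}/q\mathbb{Z})^{n+1}\\ q \mid r_0 \cdots r_n}} N_{F}(B;\mathbf{r},q),
\end{equation*}
where $N_F(B;\mathbf{r},q)$ counts $\mathbf{x} \in \mathbb{Z}^{n+1}$ satisfying $\mathbf{x} \equiv \mathbf{r} \pmod{q}$, $F(\mathbf{x}) = 0$ and $\max|x_i| \leq B$. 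The contribution of tuples having some $x_i = 0$ is $O(B^{n-d})$ by Lemma~\ref{lem:bareboneschocolate} and is absorbed into the error.

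To estimate $N_F(B;\mathbf{r},q)$ I would substitute $\mathbf{x} = q\mathbf{y} + \mathbf{r}$, reducing the problem to counting integer solutions of $G_{\mathbf{r}}(\mathbf{y}) := F(q\mathbf{y}+\mathbf{r}) = 0$ in a box of size $\asymp B/q$. Since $G_{\mathbf{r}}$ is a polynomial of degree $d$ whose homogeneous leading form equals $q^d F$, its Birch rank coincides with that of $F$, namely $n+1$ by smoothness. The hypothesis $n\geq 4+(d-1)2^d$ then provides the slack required for Birch's theorem \cite{MR150129}, together with the standard tracking of uniformity in the level, to produce an estimate of the shape
\begin{equation*}
N_F(B;\mathbf{r},q) = \sigma_\infty\, \sigma^{\mathrm{fin}}(F;\mathbf{r},q)\, B^{n+1-d} + O\!\left(q^{A_1}\, B^{n+1-d-\delta_1}\right)
\end{equation*}
for some $A_1,\delta_1>0$ depending only on $F$, where
\begin{equation*}
\sigma^{\mathrm{fin}}(F;\mathbf{r},q) = \prod_{p\nmid q}\sigma_p \cdot \prod_{p\mid q} \lim_{m\to\infty} p^{-mn}\,\#\big\{\mathbf{y}\in(\mathbb{Z}/p^m\mathbb{Z})^{n+1} : p^m \mid F(\mathbf{y}),\ \mathbf{y}\equiv \mathbf{r}\pmod{p^{v_p(q)}}\big\}.
\end{equation*}

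Summing the main term over $\mathbf{r}$ with $q\mid r_0\cdots r_n$, the Chinese remainder theorem and the multiplicativity of local densities assemble the result into exactly $\sigma(F)\,h_F(q)\,B^{n+1-d}$; indeed, at each prime $p\mid q$ the partial sum over $\mathbf{r}_p \bmod p^{v_p(q)}$ with $p^{v_p(q)} \mid r_{p,0}\cdots r_{p,n}$ collapses to $\sigma_p \cdot h_{F,p}(p^{v_p(q)})$ by the very definition of $h_F$ in the statement. The accumulated error is bounded by $q^{n+1+A_1}\, B^{n+1-d-\delta_1}$, the factor $q^{n+1}$ accounting for the number of residue classes, so that the choices $\beta := \delta_1/2$ and $\Xi := \delta_1/(2(n+1+A_1))$ deliver the required bound whenever $q \leq B^\Xi$. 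The main obstacle is securing the quantitative version of Birch's theorem with polynomial dependence on $q$ together with a power saving in $B$; this requires revisiting Birch's Weyl-differencing and minor arc estimates while tracking both the shift by $\mathbf{r}$ and the dilation by $q$, a task for which the hypothesis $n \geq 4+(d-1)2^d$ furnishes precisely the cushion needed so that the minor arc bound continues to save a positive power of $B$ after the $q$-dependent losses are absorbed.
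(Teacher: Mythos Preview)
Your proposal is correct and follows essentially the same route as the paper: decompose into residue classes modulo $q$, discard the contribution of coordinate hyperplanes via Lemma~\ref{lem:bareboneschocolate}, apply a Birch-type asymptotic uniformly in the progression, and then reassemble the local densities via the Chinese remainder theorem to obtain $\sigma(F)h_F(q)B^{n+1-d}$, choosing $\Xi$ so that the accumulated $q^{n+1+A_1}$ loss is beaten by the power saving in $B$.

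The one substantive difference is in how the uniform Birch asymptotic is obtained. The paper does not carry out the substitution $\mathbf{x}=q\mathbf{y}+\mathbf{r}$ and then redo Birch's argument; instead it quotes directly \cite[Lemma~4.4]{MR4402657}, which already packages the count of $F(\mathbf{x})=0$ in a fixed residue class modulo $q$ with a main term $\sigma_\infty\prod_{p\mid q}\sigma_p(\mathbf t,p^{v_p(q)})\prod_{p\nmid q}\sigma_p\, B^{n+1-d}$ and an error $O(B^{n+1-d-\eta}q^{M})$ for explicit $\eta,M>0$. Your substitution approach is a legitimate alternative and is indeed one way such a lemma is proved, but it forces you to unwind the archimedean and $p$-adic densities of $G_{\mathbf r}(\mathbf y)=F(q\mathbf y+\mathbf r)$ back into those of $F$, which is extra bookkeeping the paper avoids. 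What you flag as ``the main obstacle'' is thus already available off the shelf; invoking \cite[Lemma~4.4]{MR4402657} removes the need to revisit the Weyl differencing and minor arc analysis yourself.
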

\begin{proof} Adding back the terms for which  $x_0\cdots x_n=0 $ shows that  
the counting function equals$$\#\{   \b x \in (\Z\cap [-B,B])^{n+1}:  F(\b x)=0,  \mathbf x \equiv \mathbf t \md q \}
+O (\mathcal E ),$$where $\mathcal E =   \#\{   \b x \in (\Z\cap [-B,B])^{n+1}:  F(\b x)=0, x_0\cdots x_n=0 \}$. By 
Lemma~\ref{lem:bareboneschocolate} we have $\mathcal E =O(B^{n-d})$, which is satisfactory.
To deal with the main term we  partition in progressions to convert it  into 
 $$
\sum_{\substack{\mathbf t \in (\Z/q\Z)^{n+1} \\  t_0 \cdots t_n \equiv 0 \md q } }
\#\{   \b x \in (\Z\cap [-B,B])^{n+1}:  F(\b x)=0,   \mathbf x \equiv \mathbf t \md q\}
 .$$ We now employ~\cite[Lemma~4.4]{MR4402657} to deduce that 
the cardinality equals 
$$
 \sigma_\infty B^{n+1-d} 
\prod_{p\mid q} \sigma_p(\mathbf t, p^{v_p(q)})
\prod_{p\nmid q} \sigma_p 
+O(B^{n+1-d-\eta}q^{M})
 ,$$
where $\eta,M$ are positive constants that depend only on $F$, 
$$\sigma_p(\mathbf t, p^k)
:=  \lim_{m\to+\infty } 
\frac{ \#\left\{ \mathbf x \in (\Z/p^m\Z)^{n+1} :F(\mathbf x)\equiv 0 \md {p^m} ,\b x \equiv \b t \md {p^k} \right \}}{p^{mn}}
$$ and     the implied constant depends at most on $F$. The contribution of the error term is 
$$\ll B^{n+1-d-\eta}q^{M}
\sum_{\substack{\mathbf t \in (\Z/q\Z)^{n+1} \\  t_0 \cdots t_n \equiv 0 \md q } }
1\ll B^{n+1-d-\eta}q^{M+n+1}
.$$ Letting $\Xi:=\frac{\eta}{2(M+n+1)}$, the assumption $q\leq B^\Xi$ implies that   $q^{M+n+1} \leq B^{ \eta/2}  $, 
thus the error term is $O(B^{n+1-d-\eta/2})$, which is satisfactory. The main term contribution becomes 
 $$ \sigma(F) B^{n+1-d}  
\sum_{\substack{\mathbf t \in (\Z/q\Z)^{n+1} \\  t_0 \cdots t_n \equiv 0 \md q } }
\prod_{p\mid q}\frac{ \sigma_p(\mathbf t, p^{v_p(q)})}{ \sigma_p }
,$$ where we have used the fact that $\sigma_p>0$ for all primes $p$. This is guaranteed by~\cite[Lemma~7.1]{MR150129}
and the assumption that    $F=0$ is non-singular and has a $\Q$-point.
It is straightforward to see that the sum over $\b t $ forms a multiplicative function of $q$ by using the Chinese Remainder Theorem.
Its value at a prime power $p^k$ is  
$$ 
 \frac{ 1}{ \sigma_p }
\lim_{m\to+\infty } 
\frac{1}{p^{mn}}
\sum_{\substack{\mathbf t \in (\Z/p^k\Z)^{n+1} \\  t_0 \cdots t_n \equiv 0 \md {p^k} } }
\sum_{\substack{\mathbf x \in (\Z/p^m\Z)^{n+1} , \ p^k \mid \b x -\b t  \\ F(\mathbf x)\equiv 0 \md {p^m}   }}
1
 $$ which can be seen to coincide with $h_F(p^k)$ by   interchanging the order of summation.
\end{proof}
The next result will be used to study $h_F$ at prime powers. Its proof is analogous to~\cite[Lemma~3.4]{MR3867326}.

\begin{lemma}
\label{lem:30thousandfeet}
Keep the assumptions of 
Theorem~\ref{thm:spitfire}. Fix any $i\neq j   \in \Z \cap [0,n]$ and $\alpha,\beta \in \mathbb Z\cap [0,\infty)$.
Then there exists $\mu_0>0$ that depends only on $d$ such that 
$$ \lim_{m\to+\infty } \frac{ \#\left\{ \mathbf x \in (\Z/p^m\Z)^{n+1} :p^m\mid F(\mathbf x)  ,p^\alpha \mid x_i, p^\beta \mid x_j \right \}}{p^{mn}}
 = \frac{1}{p^{\alpha+\beta}}(1+O(p^{-1-\mu_0})),$$ where the implied constant depends at most on 
$i, j$ and $F$.
\end{lemma}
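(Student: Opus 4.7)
The plan is to reduce the count to Birch's $p$-adic density estimate~\eqref{eq:renfrewstreet} via a substitution. Concretely, I would substitute $x_i = p^\alpha y$ and $x_j = p^\beta z$ and set $\tilde F(\b x',y,z) := F(\ldots, p^\alpha y, \ldots, p^\beta z, \ldots)$, where $\b x'$ collects the remaining $n-1$ coordinates. Since $\tilde F$ modulo $p^m$ depends on $y$ only through its class modulo $p^{m-\alpha}$ (and analogously for $z$), a direct enumeration identifies the cardinality in the statement with $p^{-\alpha-\beta} \cdot \#\{(\b x',y,z) \in (\Z/p^m\Z)^{n+1} : p^m \mid \tilde F\}$. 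Dividing by $p^{mn}$ and letting $m \to +\infty$, the limit in the lemma becomes $p^{-\alpha-\beta}\, \sigma_p(\tilde F)$, so the task reduces to showing $\sigma_p(\tilde F) = 1 + O(p^{-1-\mu_0})$.

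Next I would verify that $\tilde F$, which is homogeneous of degree $d$ in $n+1$ variables, has Birch rank large enough for~\eqref{eq:renfrewstreet} to apply. Over $\bar\Q$ the factors $p^\alpha, p^\beta$ are units, and the smoothness of $F$ immediately forces the smoothness of $\tilde F$ (Birch rank equal to $n+1$). The delicate input is the Birch rank of $\tilde F$ over $\bar\F_p$: when $\alpha, \beta \geq 1$, the derivatives $\partial \tilde F/\partial y = p^\alpha(\partial F/\partial x_i)(\cdot)$ and $\partial \tilde F/\partial z = p^\beta (\partial F/\partial x_j)(\cdot)$ vanish identically modulo $p$, so the condition $\nabla \tilde F \equiv 0 \md p$ reduces to $\nabla_{\b x'} F(\b x',0,0) \equiv 0 \md p$, with $y,z$ free. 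Two applications of~\cite[Lemma~3.1]{MR3867326} then bound the Birch rank of the slice $F|_{x_i = x_j = 0}$ from below by $\mathfrak B(F) - 4 = n - 3$, and the hypothesis $n \geq 4 + (d-1)2^d$ guarantees $n-3 > (d-1)2^d$; the boundary cases $\alpha = 0$ and/or $\beta = 0$ are handled identically with~\cite[Lemma~3.1]{MR3867326} invoked fewer times.

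Finally, with the Birch rank of $\tilde F$ controlled uniformly in $i,j,\alpha,\beta$, I would invoke the $p$-adic density estimate~\eqref{eq:renfrewstreet} applied to $\tilde F$ in place of $F$, obtaining $\sigma_p(\tilde F) = 1 + O(p^{-1-\mu_0})$ with $\mu_0 > 0$ depending only on $d$. Multiplying by the prefactor $p^{-\alpha-\beta}$ yields the lemma. The main obstacle is ensuring the exponent $\mu_0$ and the implicit constant in Birch's bound are uniform across the family of polynomials $\tilde F$, whose coefficients involve the powers $p^\alpha, p^\beta$; this is not a substantive issue because the estimates of~\cite[Lemma~7.1]{MR150129} underlying~\eqref{eq:renfrewstreet} depend only on $d$ and on the Birch rank of the reduction modulo $p$, both of which we have bounded uniformly in the parameters $\alpha,\beta$.
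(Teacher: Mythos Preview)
Your reduction to $\sigma_p(\tilde F)$ is correct, but the final step has a genuine gap. You claim that the implied constant in~\eqref{eq:renfrewstreet} (equivalently, in Birch's Lemma~7.1) ``depends only on $d$ and on the Birch rank of the reduction modulo $p$''. This is not what Birch proves: his Weyl-differencing argument is carried out over $\Z$, the Birch rank that enters is the rank over $\C$, and the various implied constants in his Lemmas~2.1--2.5 and~5.4 depend on the specific integer form. Your $\tilde F$ has coefficients scaled by powers $p^{\alpha},p^{\beta}$, and nothing in the cited results guarantees that the resulting $O$-constant is uniform as $p,\alpha,\beta$ vary. Moreover, the Birch rank of $\tilde F$ over $\C$ is actually $n+1$ (it is a non-degenerate linear change of variables away from $F$), so your mod-$p$ computation, while correct, is not the quantity that Birch's machinery uses.

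The paper's proof sidesteps exactly this difficulty, and the comparison is instructive. Instead of substituting, it \emph{fixes} the two constrained coordinates at specific values $x_1,x_2$ and expands the count as a sum of $N(x_1,x_2)$, each of which is a mod-$p^m$ count for the (inhomogeneous) polynomial $F_{x_1,x_2}$ in the remaining $n-1$ variables. The crucial point is that Birch's Weyl differencing in Lemma~2.1 annihilates all lower-order terms, so the exponential-sum bound from Lemma~5.4 depends only on the top-degree part of $F_{x_1,x_2}$, namely $F_{0,0}$, which is \emph{independent} of $x_1,x_2$. This is what delivers uniformity. Your $\tilde F$, by contrast, is already homogeneous, so its ``top-degree part'' is $\tilde F$ itself and still carries the dependence on $p,\alpha,\beta$; to rescue your route you would effectively need to freeze $y,z$ and sum over $\mathbf x'$, at which point you have reproduced the paper's argument.
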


\begin{proof}
Let $m\geq\max\{\alpha,\beta\}+1$. Then 
\begin{equation}
\label{eq:kebab}
 \#\left\{ \mathbf x \in (\Z/p^m\Z)^{n+1} :p^m \mid F(\mathbf x)  ,p^\alpha \mid x_i, p^\beta \mid x_j \right \}
=\sum_{\substack{  (x_1,x_2)  \in (\Z/p^m\Z)^{2} \\p^\alpha \mid x_1, p^\beta \mid x_2 }} N(x_1,x_2),\end{equation}
where  $N(x_1,x_2)=\#\{\b y \in (\Z/p^m\Z)^{n-1}: F_{x_1,x_2}(\b y)
\equiv 0 \md{p^m} \}$ 
and    
$$
F_{x_1,x_2}(\b y):= F(y_0,\ldots, y_{i-1}, x_1, y_{i+1},\ldots,y_{j-1}, x_2, y_{j+1},\ldots, y_n ).
$$ 
The expression $$ \frac{1}{p^m} \sum_{a\in \Z/p^m \Z} \exp\left(2 \pi i \frac{a}{p^m} F_{x_1,x_2}(\b y)\right)$$ is $1$ or $0$ according to whether 
$F_{x_1,x_2}(\b y)$ is divisible by $p^m$ or not.
Writing $a=p^{m-t} b$ for some $b \in (\Z/p^t \Z)^*$ the expression becomes $$ \frac{1}{p^m} 
\sum_{t=0}^m
\sum_{b\in (\Z/p^t \Z)^*} \exp\left(2 \pi i \frac{b}{p^t} F_{x_1,x_2}(\b y)\right)=
 \frac{1}{p^m} + \frac{1}{p^m} \sum_{t=1}^m \sum_{b\in (\Z/p^t \Z)^*} \exp\left(2 \pi i \frac{b}{p^t} F_{x_1,x_2}(\b y)\right)
.$$
Hence, $$N(x_1,x_2)=p^{ m(n-2)}
+  \frac{1}{p^m} \sum_{t=1}^m \sum_{b\in (\Z/p^t \Z)^*}
\sum_{\b y \in (\Z/p^m\Z)^{n-1} } 
\exp\left(2 \pi i \frac{b}{p^t} F_{x_1,x_2}(\b y)\right)
.$$ Replacing $\b y$   by its value $\md{p^t}$  does not  affect the exponential, thus, 
$$N(x_1,x_2)=p^{ m(n-2)}
+   p^{m(n-2)}
\sum_{t=1}^m  
p^{-t(n-1)} 
\sum_{b\in (\Z/p^t \Z)^*}  
\sum_{\b z \in (\Z/p^t\Z)^{n-1} } 
\exp\left(2 \pi i \frac{b}{p^t} F_{x_1,x_2}(\b z)\right)
.$$  
We can view $F_{x_1,x_2}(\b y)$ as a  polynomial in $\b y$ since $x_1,x_2$ are fixed. 
It will be non-homogeneous and its degree $d$ part is homogeneous and equals  $F_{0,0}(\b y)$. 
By~\cite[Lemma~3.1]{MR3867326} one has $\mathfrak{B}(F_{x_1,x_2} )\geq \mathfrak{B}(F)-4=n-3$, since 
one must take  $|\b j|_1=2$ and $R=1$ in~\cite{MR3867326}. Our assumptions ensure that $n-3 \geq 1+(d-1)2^d$, thus, 
\cite[Lemma~5.4]{MR150129} shows that,  for every fixed positive $\epsilon$, 
the sum over  
$\b z $ is  
$\ll p^{t(n-1-\mu+\epsilon)}$, where $ \mu= K(F_{x_1,x_2})/(d-1)$
and  $K(g)$ is defined in~\cite[Equation~(8), page 252]{MR150129} as $ \mathfrak B(g^\flat) 2^{-d+1}$.
Note that~\cite[Lemma~5.4]{MR150129} is unaffected by the lower order terms coming from $x_1,x_2$,
since the proof is based on a Weyl differencing process in~\cite[Lemma~2.1]{MR150129}.   
We obtain 
$$ 
N(x_1,x_2)
=p^{ m(n-2)}+ O\Big( p^{ m(n-2)}\sum_{t=1}^m 
p^{t(1-\mu+\epsilon)}  
\Big).$$ We aim to show that $1-\mu<-1$. Using $\mathfrak{B}(F_{x_1,x_2} )\geq n-3$ we obtain 
$$ 
\mu =   \frac{\mathfrak{B}(F_{x_1,x_2} )2^{-d+1} }{d-1}\geq   \frac{(n-3)2^{-d+1} }{d-1} \geq 2+ \frac{2^{-d+1}}{d-1},
$$  
where we used  our assumption $n \geq 4+(d-1)2^d$ in the last step. Let $\epsilon=2^{-d}/(d-1)$ and $\mu_0=-2+\mu-\epsilon$, so that 
 $1-\mu+\epsilon =-1-\mu_0<-1$ and $$ \sum_{t=1}^m p^{t(1-\mu+\epsilon)}
\ll p^{1-\mu+\epsilon}=p^{-1-\mu_0}.$$  Therefore, $
N(x_1,x_2)=p^{ m(n-2)} (1+O(p^{-1-\mu_0}))
$, which can be 
injected into~\eqref{eq:kebab} to obtain 
$$
\#\left\{ \mathbf x \in (\Z/p^m\Z)^{n+1} :p^m \mid F(\mathbf x)  ,p^\alpha \mid x_i, p^\beta \mid x_j \right \}
=p^{mn -\alpha-\beta}  (1+O(p^{-1-\mu_0}))
$$ 
since the right-hand of~\eqref{eq:kebab} has $p^{2m-\alpha-\beta}$ terms. Dividing by $p^{mn}$ concludes the proof.
\end{proof}

\begin{lemma}\label{lem:deathofalgebra}Keep the assumptions of 
Theorem~\ref{thm:spitfire}, let $p$ be a prime and assume that $F=0$ has a $\Q_p$-point. 
There exist positive constants $\delta_F, \delta_F'$ that depend only on $F$ such that for all  $e\geq 1 $ we have 
$$ 
h_F(p) = \frac{n+1}{p} \left(1+O(p^{-1-\delta_F'})\right) \ \ \textrm{ and  } \ \ h_F(p^e) \leq p^{\delta_F-e/(n+1)},
$$
where  the  implied constant depends at most on $F$. 
\end{lemma}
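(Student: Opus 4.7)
The plan is to derive both estimates from Lemma~\ref{lem:30thousandfeet} combined with the asymptotic $\sigma_p = 1 + O(p^{-1-\lambda})$ recorded in~\eqref{eq:renfrewstreet}. The first bound will come from inclusion--exclusion applied to the divisibility event $p \mid x_0 \cdots x_n$, and the second from the pigeonhole observation that if $p^e$ divides a product of $n+1$ integers, then at least one factor is divisible by $p^{\lceil e/(n+1) \rceil}$.

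For the first estimate we will write
$$
\mathds 1(p \mid x_0 \cdots x_n) = \sum_{\emptyset \neq S \subseteq \{0, \ldots, n\}} (-1)^{|S|+1} \prod_{i \in S} \mathds 1(p \mid x_i),
$$
multiply through by $\mathds 1(p^m \mid F(\b x))$, sum over $\b x \in (\Z/p^m\Z)^{n+1}$, divide by $p^{mn}$, and let $m \to \infty$. The singleton terms $|S|=1$ are evaluated via Lemma~\ref{lem:30thousandfeet} with $(\alpha, \beta) = (1, 0)$ and supply the main term $(n+1)/p$. The pair terms $|S|=2$ are evaluated by the same lemma with $\alpha = \beta = 1$ and yield a signed correction of size $O(1/p^2)$. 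For $|S| \geq 3$ the event is contained in any two-element sub-event, so each such term is $O(1/p^2)$ and there are only $O_n(1)$ of them, contributing a further $O(1/p^2)$. Dividing by $\sigma_p$ then produces the claimed formula for $h_F(p)$, with $\delta_F'$ chosen no larger than $\min(\mu_0, \lambda)$.

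For the second estimate the pigeonhole observation gives
$$
\#\{\b x \in (\Z/p^m\Z)^{n+1} : p^m \mid F(\b x),\ p^e \mid x_0 \cdots x_n\} \leq \sum_{i=0}^n \#\{\b x \in (\Z/p^m\Z)^{n+1} : p^m \mid F(\b x),\ p^{\lceil e/(n+1) \rceil} \mid x_i\}.
$$
Each summand is controlled by Lemma~\ref{lem:30thousandfeet} with $(\alpha, \beta) = (\lceil e/(n+1) \rceil, 0)$, which bounds the limit by $p^{-\lceil e/(n+1) \rceil}(1 + O(p^{-1-\mu_0}))$. Using $p^{-\lceil e/(n+1) \rceil} \leq p^{-e/(n+1)}$ and dividing by $\sigma_p$, we arrive at $h_F(p^e) \leq C(F) \cdot p^{-e/(n+1)}$; the factor $C(F)$ is absorbed into $p^{\delta_F}$ by choosing $\delta_F$ large enough in terms of $F$.

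The main technical point is ensuring that $1/\sigma_p$ stays bounded uniformly in $p$. For primes $p$ exceeding a threshold depending on $F$, this is immediate from~\eqref{eq:renfrewstreet}. For the remaining finitely many small primes, the hypothesis that $F = 0$ admits a $\Q_p$-point combined with the smoothness of $F$ gives $\sigma_p > 0$ via~\cite[Lemma~7.1]{MR150129}, and the minimum of $\sigma_p$ across this finite set is absorbed into the $F$-dependence of the implied constants.
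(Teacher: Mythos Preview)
Your proof is correct and follows essentially the same route as the paper: inclusion--exclusion (the paper truncates at pairs via a Bonferroni bound, you carry the full expansion and control $|S|\geq 3$ by containment in pairs, which amounts to the same thing) combined with Lemma~\ref{lem:30thousandfeet} for $h_F(p)$, and the pigeonhole plus Lemma~\ref{lem:30thousandfeet} for $h_F(p^e)$. Your treatment of $h_F(p^e)$ is in fact slightly cleaner, since the paper unnecessarily splits off the range $e\in[1,n)$ and invokes a trivial bound there, whereas your pigeonhole argument works uniformly for all $e\geq 1$.
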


\begin{proof}  
We have 
\begin{align*}
&\frac{ \#\left\{ \mathbf x \in (\Z/p^m\Z)^{n+1} :p^m\mid F(\mathbf x) , p \mid x_0 \cdots x_n   \right \}}{p^{mn}}
\\=\sum_{i=0}^n&\frac{ \#\left\{ \mathbf x \in (\Z/p^m\Z)^{n+1} :p^m\mid F(\mathbf x) , p \mid x_i  \right \}}{p^{mn}}
+O\left(\sum_{1\leq i< j \leq n } \c E_{i,j}\right),\end{align*} where  
$$
\c E_{i,j}
=
\frac{ \#\left\{ \mathbf x \in (\Z/p^m\Z)^{n+1} :p^m\mid F(\mathbf x) , p \mid x_i, p\mid x_j  \right \}}{p^{mn}}
.$$ By Lemma~\ref{lem:30thousandfeet} with $\alpha=\beta=p$ we see that $\c E_{i,j}\ll 1/p^2$.
To estimate the sum over $0\leq i \leq n $ in the main term we use Lemma~\ref{lem:30thousandfeet} with $\alpha=p,\beta=0$
to obtain  $$\sum_{i=0}^n \frac{(1+O(p^{-3/2}))}{p}  +O(n^2  p^{-2} ) =\frac{n+1}{p}+
O(p^{-1-\mu_0} ).$$ Our assumptions ensure that 
$\sigma_p>0$ and recalling~\eqref{eq:renfrewstreet} proves the claimed estimate on $h_F(p)$.

To bound $h_F(p^e)$ note     that if $m>e\geq n$  and $\mathbf x \in (\Z/p^m\Z)^n $ is such that 
$ x_0 \cdots x_n \equiv 0 \md {p^e}$ then  there exists $0\leq i \leq n $ such that $v_p(x_i)\geq e/(n + 1)$.  
By Lemma~\ref{lem:30thousandfeet} 
with $\beta=0$ and $\alpha$ given by the least integer satisfying $\alpha \geq e/(n+1)$ we infer that 
$h_F(p^e) \leq Cp^{-e/(n+1)}$ for some positive constant $C=C(F)$ by taking $m\to\infty$ in the definition of $h_F(p^e)$. 
If $e \in [1,n)$ then we use the trivial bound $h_F(p^e) \leq p^{n+1}$. Thus, in all cases we have shown that 
$$ h_F(p^e) \leq \frac{\max\{C, p^{n+1} \}}{p^{\frac{e}{n+1} }}.$$ Letting $\gamma= (\log C)/(\log 2)$, we infer that 
$C\leq 2^\gamma\leq p^\gamma$, hence, $$ h_F(p^e) \leq p^{\max\{\gamma, n+1\} -e/(n+1) },$$thus, concluding the proof.  \end{proof}
To prove Theorem~\ref{thm:spitfire}  we  can assume with no loss of generality that $F=0$ has a $\Q$-point
so that the set $\mathfrak C:=\{|x_0\cdots x_n|: \b x \in (\Z\setminus\{0\})^{n+1}, F(\b x)=0\}$ is non-empty.
Let $ \mathcal A =\{\b x \in (\Z\setminus\{0\})^{n+1}: F(\b x)=0\}$ and for every $a=\b x \in \mathcal A$ define $c_a= |x_0\cdots x_n|$.
Define  $\chi_B:\mathcal A\to [0,\infty)$  by 
$$\chi_B(\b x):=\mathds 1_{[0,B]}(\max\{|x_i|:0\leq i \leq n \}).$$ By 
Lemma~\ref{lem:birlevel}   there exist $\Xi,\beta>0$ such that for all  $q\leq B^\Xi$ one has 
$$\sum_{\substack{ a\in \mathcal A  \\ q\mid c_a}} \chi_B(a)=\sigma(F) B^{n+1-d} (h_F(q) +O(B^{-\beta} ) ).$$ 
This shows that  the property in Definition~\ref{def:levdistr} is fulfilled   for $M= \sigma(F) B^{n+1-d}$. In particular, 
$$\sup\{c_a:\chi_B(a)>0\}\leq \sup\{|x_0\cdots x_n|: \max|x_i| \leq B \} 
\leq B^{n+1}\ll M^{(n+1)/(n+1-d)},$$ thus,~\eqref{Roll Over Beethoven} holds with $\alpha= \frac{n+1}{n+1-d}$.
Note that $h_F\in \mathcal D(n+1,  1/(n+1), \delta_F, B', K)$ for some positive constants $B',K$ that depend only on $F$ due to $h_F(p^e)\leq h_F(p)$ and Lemma~\ref{lem:deathofalgebra}.
We can thus employ Theorem~\ref{tMain}  to deduce  that $$\sum_{\substack{ \b x \in (\Z\setminus\{0\})^{n+1}\\  \max |x_i|\leq B, F(\b x)=0  }}
\hspace{-0,5cm}f(|x_0 \cdots x_n |) \ll  B^{n+1-d} \prod_{\substack{ p\leq M    } } (1-h_F(p ) ) 
\sum_{\substack{ a\leq M }} f(a)     h_F(a),
$$
where $M=\sigma(F)B^{n+1-d}$. By Lemma~\ref{lem:deathofalgebra} we can bound the product over $p\leq M $ by 
$\ll \exp(-(n+1)\sum_{p\leq M} 1/p)$. Combining this with the succeding lemma completes the proof of Theorem~\ref{thm:spitfire}. 
\begin{lemma}
\label{lem:bwv830}
Keep the setting of Theorem~\ref{thm:spitfire}. 
 For every fixed constant $\gamma>0$ and each $B\geq 1 $ 
we have 
$$
\sum_{\substack{ a\leq B^\gamma }} f(a)     h_F(a) \ll  \exp \left((n+1)  \sum_{p\leq B } \frac{f(p)}{p}\right),
$$ where the implied constant depends at most on $f,A,\gamma$ and $F$.
\end{lemma}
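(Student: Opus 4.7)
The plan is to apply Lemma~\ref{lem:lord of all fevers and plague} with the substitution $F\mapsto h_F$ and $G\mapsto f$, and then convert the resulting exponential into the form stated in the lemma. The multiplicative structure of $f$ is already of the right shape: the assumption $f\in\mathcal{M}(A,\epsilon,C)$ for every $\epsilon>0$ gives $f(ab)\leq f(a)\min\{A^{\Omega(b)},C(\epsilon)b^\epsilon\}$ for coprime $a,b$, matching the hypothesis on $G$ in Lemma~\ref{lem:lord of all fevers and plague} with $c_2=1/(n+1)$ upon choosing $\epsilon=1/(2(n+1))$.

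The bulk of the work lies in verifying the required bounds on $h_F$. Lemma~\ref{lem:deathofalgebra} immediately supplies $h_F(p^e)\leq p^{\delta_F-e/(n+1)}$, producing admissible $c_1=\delta_F$ and $c_2=1/(n+1)$, while the asymptotic $h_F(p)=(n+1)/p+O(p^{-2-\delta_F'})$ ensures $\sup_p p\cdot h_F(p)<\infty$ and yields a suitable $c_0$. The delicate condition is $h_F(p^e)\leq c_3/p^2$ for all primes $p$ and all $e\geq 2$. Since $h_F(p^e)$ is by construction a normalised $p$-adic density of $\{p^e\mid x_0\cdots x_n\}$, it is non-increasing in $e$, so it suffices to establish $h_F(p^2)\ll p^{-2}$ for large primes. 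This will follow from the set-theoretic inclusion
$$\{p^2\mid x_0\cdots x_n\}\subseteq\bigcup_{0\leq i\leq n}\{p^2\mid x_i\}\cup\bigcup_{0\leq i<j\leq n}\{p\mid x_i,\,p\mid x_j\},$$
a union bound, and Lemma~\ref{lem:30thousandfeet} applied to each of the $(n+1)+\binom{n+1}{2}$ events, each contributing at most $\sigma_p^{-1}(1+O(p^{-1-\mu_0}))/p^2$. For the finitely many small primes, $p^2h_F(p^e)$ is uniformly bounded in $e\geq 2$ thanks to the geometric decay already encoded in Lemma~\ref{lem:deathofalgebra}; enlarging $c_3$ to absorb this finite supremum completes the verification.

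With the hypotheses checked, Lemma~\ref{lem:lord of all fevers and plague} yields
$$\sum_{\substack{a\leq B^\gamma\\ P^-(a)>c_0}}f(a)h_F(a)\ll\exp\Bigl(\sum_{c_0<p\leq B^\gamma}f(p)h_F(p)\Bigr).$$
The contribution from $a$ with $P^-(a)\leq c_0$ will be handled by the unique factorisation $a=a_1a_2$ with $\gcd(a_1,a_2)=1$, where $a_1$ is supported on primes $\leq c_0$ and $a_2$ on primes $>c_0$: the sum over $a_1$ is a finite Euler product whose local factors $1+\sum_{e\geq 1}f(p^e)h_F(p^e)$ converge by Lemma~\ref{lem:deathofalgebra}, producing only a bounded multiplicative constant. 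Inserting $h_F(p)=(n+1)/p+O(p^{-2-\delta_F'})$ and using $f(p)\leq 2^s$ gives $\sum_{p\leq B^\gamma}f(p)h_F(p)=(n+1)\sum_{p\leq B^\gamma}f(p)/p+O(1)$, and Mertens' estimate bounds the tail $\sum_{B<p\leq B^\gamma}f(p)/p=O_\gamma(1)$, permitting truncation at $B$ in the exponent. The principal obstacle will be the uniform bound $h_F(p^e)\ll p^{-2}$ for $e\geq 2$: Lemma~\ref{lem:deathofalgebra} alone yields only the unsummable term $p^{\delta_F-2/(n+1)}$, and this loss must be compensated by the monotonicity argument combined with the finer information supplied by Lemma~\ref{lem:30thousandfeet}.
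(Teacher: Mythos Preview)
Your proposal is correct and follows essentially the same route as the paper: apply Lemma~\ref{lem:lord of all fevers and plague} after verifying $h_F(p^e)\ll p^{-2}$ for $e\geq 2$ via monotonicity and the union bound combined with Lemma~\ref{lem:30thousandfeet}, then simplify the exponent using the asymptotic for $h_F(p)$ and truncate with Mertens. You are in fact slightly more careful than the paper in explicitly treating the restriction $P^-(a)>c_0$ from the conclusion of Lemma~\ref{lem:lord of all fevers and plague}, which the paper silently absorbs into the implied constant; your separate handling of small primes for the $p^{-2}$ bound is harmless but redundant, since Lemma~\ref{lem:30thousandfeet} already holds uniformly in $p$.
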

\begin{proof}
We will apply Lemma~\ref{lem:lord of all fevers and plague} with $G=f$.
It is clear that $f$ satisfies the required assumptions. 
We next verify the required assumptions for   $h_F$: the bound~\eqref{eFbound} holds for $h_F(p^e)$ due to 
Lemma~\ref{lem:deathofalgebra} and the fact that $h_F(p^e) \leq h_F(p)$.
It remains to prove the estimate
 $h_F(p^e) \ll 1/p^2$ 
 for all $e\geq 2$.
Since $h_F(p^e)\leq h_F(p^2)$ it suffices to bound  $h_F(p^2)$. To do so we  
 note that if $\b x \in (\Z/p^m\Z)^{n+1}$ is such that $F(\b x ) \equiv 0 \md{p^m}$ and $p^2 \mid x_0\cdots x_n$, then 
either there exists $i$ such that $p^2 \mid x_i$ or there are $i\neq j $ such that $p\mid x_i$ and $p\mid x_j$.  In the first case we may employ 
Lemma~\ref{lem:30thousandfeet} with $\alpha=2$ and $\beta=0$ and in the second case with $\alpha=1$ and $\beta=1$ in order to obtain the bound 
$h_F(p^2)\ll p^{-2}$.  Now
 Lemma~\ref{lem:lord of all fevers and plague} implies that 
\[\sum_{a\leq B^{\gamma}}f(a)h_F(a)\ll \exp\left(\sum_{p\leq B^{\gamma}}f(p)h_F(p)\right),\]
where the implied constant only depends on $f$ and $F$. Using the estimate for  $h_F(p)$
from Lemma~\ref{lem:deathofalgebra} and  noting that $f(p)\leq A$ and $\sum p^{-2}\leq \infty$, we can rewrite the sum over $p\leq B^{\gamma}$ as
\[\sum_{p\leq B^{\gamma}}f(p)h_F(p)=(n+1)\sum_{p\leq B^{\gamma}}\frac{f(p)}{p}+O\left((n+1)A\right).\]
Therefore
\[\sum_{a\leq B^{\gamma}}f(a)h_F(a)\ll \exp\left((n+1)\sum_{p\leq B^{\gamma}}\frac{f(p)}{p}\right),\]
where the implied constant depends at most on $f$, $A$, $n$ and $h_F$.
 To conclude the proof we note that for all $0<\gamma_1<\gamma_2$ one has 
$$\sum_{B^{\gamma_1} < p\leq B^{\gamma_2} } \frac{f(p)}{p} \leq A \sum_{B^{\gamma_1} < p\leq B^{\gamma_2} } \frac{1}{p}=A\log \frac{\gamma_2}{\gamma_1}+O(1/\log B)
\ll 1,$$thus, one can replace the condition  $ p\leq B^\gamma$ by $p\leq B$ at the cost of a different implied constant. 
\end{proof}

\section{Polynomial values}\label{s:nrtnbm}
In this section we give upper and lower bounds for sums of the form~\eqref{eq:sumsrrr}.
\subsection{Proving equidistribution}  Here we prove the necessary   results that will be fed into 
Theorems~\ref{tMain}-\ref{thm:lower} to yield  Theorems~\ref{thm:nrtnbm}-\ref{thm:lowernair}.

If  $p_Q$ denotes the largest prime dividing all coefficients of $Q$ then for $p\leq p_Q$ we 
have $\varrho_Q(p^e) \leq 1 \leq p_Q/p$ for all $e\geq 1 $. For all other primes we   use~\cite[Lemma~2.7]{MR3988669}   to obtain  
\begin{equation}
\label{eq:wang} 
\varrho_Q(p^e) \leq \frac{p_Q+\deg(Q)}{p}.
\end{equation} 
Furthermore, Lemma~\ref{lem:wud} gives  
\begin{equation}
\label{eq:wang2}
\varrho_Q(p^e) \leq \frac{C_0 }{p^{e/\deg(Q)}}
\end{equation}
for a positive constant $C_0$ that only depends 
on $Q$. Finally, Lemma~\ref{lem:chebotrv} shows   
\begin{equation}\label{eq:wang3}\prod_{p\leq x}  (1- \varrho_Q(q))
=c (\log x)^{-r} (1+O(1/\log x)),\end{equation} where $r$ is the number of irreducible components of $Q$ and $c$ is a positive number depending only on $Q$.

The following definition is based on~\cite[Definition 2.2]{MR2846337}. Write $|\cdot|$ for the usual Euclidean norm on $\mathbb{R}^k$ for $k\in \mathbb N$.

\begin{definition}[Regions]
Let $n, M \geq 1$ be integers and let $L > 0$ be a real number. We say that a subset $S$ of $\mathbb{R}^n$ is in $\mathrm{Reg}(n, M, L)$ if
\begin{enumerate}
\item $S$ is bounded, 
\item there exist $M$ maps $\phi_1, \dots, \phi_M: [0, 1]^{n - 1} \rightarrow \mathbb{R}^n$ satisfying
\[
|\phi_i(\mathbf{x}) - \phi_i(\mathbf{y})| \leq L |\mathbf{x} - \mathbf{y}|
\]
for $\mathbf{x}, \mathbf{y} \in [0, 1]^{n - 1}$ and $i = 1, \dots, M$ such that the images of $\phi_i$ cover $\partial S$.
\end{enumerate}
\end{definition}

\begin{lemma}[Widmer, {\cite[Theorem~2.4]{MR2846337}}]
\label{tWidmer} Each 
  $S \in \mathrm{Reg}(n, M, L)$ is  measurable and  satisfies 
\[
\left|\#(S \cap \mathbb{Z}^n)- \mathrm{vol}(S)\right| \leq n^{\frac{3n^2}{2}} M \max(L^{n - 1}, 1).
\]
\end{lemma}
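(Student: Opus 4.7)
The plan is to follow the now-standard lattice-point-counting strategy going back to Lipschitz and Davenport, with careful book-keeping of the numerical constants to obtain the explicit factor $n^{3n^2/2}$. First I would settle the measurability of $S$: since $\partial S$ is the union of $M$ Lipschitz images of the compact cube $[0,1]^{n-1}$, each such image has finite $(n-1)$-dimensional Hausdorff measure and hence $n$-dimensional Lebesgue measure zero. Thus $\partial S$ has outer Jordan content zero, $S$ is Jordan measurable, and $\mathrm{vol}(S)$ makes sense.

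For the counting bound I would associate to each $\mathbf{m}\in\mathbb{Z}^n$ the half-open unit cube $C_{\mathbf{m}}=\mathbf{m}+[0,1)^n$, so that $\{C_{\mathbf{m}}\}_{\mathbf{m}\in\mathbb{Z}^n}$ tiles $\mathbb{R}^n$ and $\mathrm{vol}(S)=\sum_{\mathbf{m}}\mathrm{vol}(S\cap C_{\mathbf{m}})$. The key observation is that if $C_{\mathbf{m}}\cap\partial S=\emptyset$, then either $C_{\mathbf{m}}\subset S$ or $C_{\mathbf{m}}\cap S=\emptyset$, so in either case the contribution of $\mathbf{m}$ to $\#(S\cap\mathbb{Z}^n)$ and that to $\mathrm{vol}(S\cap C_{\mathbf{m}})$ differ by at most $1$ in absolute value and usually cancel: the total discrepancy is bounded by the number $\mathcal{N}$ of unit cubes $C_{\mathbf{m}}$ that meet $\partial S$.

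To bound $\mathcal{N}$, I would exploit the Lipschitz covering. Set $\ell=\max(\lceil L\rceil,1)$ and subdivide $[0,1]^{n-1}$ into $\ell^{n-1}$ closed subcubes of side $1/\ell$ and diameter $\sqrt{n-1}/\ell$. For each $i$, the image of such a subcube under $\phi_i$ has diameter at most $L\sqrt{n-1}/\ell\leq\sqrt{n-1}$, so it sits inside a closed ball of radius $\sqrt{n-1}$ in $\mathbb{R}^n$. An elementary volume-comparison argument shows that such a ball meets at most $(2\sqrt{n-1}+2)^n\leq(2\sqrt{n})^n$ unit cubes of the integer tiling. Summing gives
\[
\mathcal{N}\;\leq\; M\cdot\ell^{n-1}\cdot(2\sqrt{n})^{n}\;\leq\; M\,\max(L^{n-1},1)\cdot n^{3n^2/2},
\]
after absorbing the $(2\sqrt{n})^{n}$ and the $\ell^{n-1}\leq(L+1)^{n-1}$ losses into the (very generous) factor $n^{3n^2/2}$; this is the step where one must be careful with constants to match the statement exactly.

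The main obstacle is genuinely bookkeeping: the geometric ideas are classical, but extracting the explicit constant $n^{3n^2/2}$ requires tracking the dimension-dependent losses at each step (the diameter-to-ball comparison, the cube-covering of a ball, and the subdivision parameter $\ell$) and verifying that the dominant term absorbs all of them uniformly in $L\geq 0$. Once these elementary geometric estimates are aligned, the rest of the argument is a one-line comparison of the two sums.
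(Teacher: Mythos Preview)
The paper does not give its own proof of this lemma: it is quoted verbatim as Widmer's theorem with a citation to \cite[Theorem~2.4]{MR2846337} and no further argument. Your sketch is the classical Lipschitz--Davenport cube-counting argument, which is exactly the method Widmer uses, so in that sense you have reproduced the intended proof rather than offered an alternative.

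Your outline is correct in all the structural steps: Jordan measurability from the $(n-1)$-dimensional Lipschitz parametrisation of $\partial S$, reduction of the discrepancy to the number $\mathcal{N}$ of unit cubes meeting $\partial S$, and the bound on $\mathcal{N}$ by subdividing $[0,1]^{n-1}$ into $\ell^{n-1}$ pieces whose images have bounded diameter. The only point you leave genuinely open is the explicit constant: you assert that the accumulated losses can be absorbed into $n^{3n^2/2}$ but do not actually carry out the arithmetic. That step is not difficult (for $n\geq 2$ one has, say, $2^{n-1}(\sqrt{n-1}+2)^n \leq 6^n n^{n/2}$, which is far below $n^{3n^2/2}$), but since the exact constant is part of the statement you would need to write it down rather than wave at it. With that caveat, your proposal is a faithful reconstruction of Widmer's argument.
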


\begin{lemma}\label{lem:lips}
Let $\mathcal D \in \mathrm{Reg}(n, M, L)$. Then for all  $\mathbf w \in [-1,1]^n$  and  $t\geq 1 $ one has 
$$\#\{\mathbf x\in \mathbb Z^n \cap (t \mathcal D+\mathbf w) \}=\mathrm{vol}(\mathcal D) t^n +O(t^{n-1}),
$$ where the implied constant depends on $n, M, L$, but is independent of $t$ and  $\mathbf w$.
\end{lemma}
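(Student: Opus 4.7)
The plan is to apply Widmer's result, Lemma~\ref{tWidmer}, to the region $\mathcal S := t\mathcal D + \mathbf w$. To do so, the first step is to observe that $\mathcal S \in \mathrm{Reg}(n, M, tL)$. Indeed, if $\phi_1, \dots, \phi_M: [0,1]^{n-1} \to \mathbb R^n$ are the maps whose images cover $\partial \mathcal D$ and which are $L$-Lipschitz, then the maps
$$\psi_i(\b x) := t\phi_i(\b x) + \b w$$
are defined on $[0,1]^{n-1}$, their images cover $\partial \mathcal S$, and they satisfy
$$|\psi_i(\b x) - \psi_i(\b y)| = t|\phi_i(\b x) - \phi_i(\b y)| \leq tL\, |\b x - \b y|$$
for all $\b x, \b y \in [0,1]^{n-1}$. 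Since $\mathcal D$ is bounded and $\b w$ is a fixed vector, $\mathcal S$ is bounded. Hence $\mathcal S$ meets both requirements of the definition, so it lies in $\mathrm{Reg}(n, M, tL)$.

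The second step is to apply Lemma~\ref{tWidmer} to $\mathcal S$, which gives
$$\left|\#(\mathcal S \cap \Z^n) - \mathrm{vol}(\mathcal S)\right| \leq n^{\frac{3n^2}{2}} M \max((tL)^{n-1}, 1).$$
Since $t \geq 1$ and $L > 0$ is fixed, the right-hand side is $O(t^{n-1})$, with implied constant depending only on $n, M, L$ and not on $t$ or $\b w$. The third step is to compute $\mathrm{vol}(\mathcal S)$: translation by $\b w$ does not affect Lebesgue measure, and scaling by $t$ multiplies it by $t^n$, so $\mathrm{vol}(\mathcal S) = t^n \mathrm{vol}(\mathcal D)$. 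Combining these ingredients gives exactly the claimed estimate.

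There is essentially no obstacle here; the entire proof is an invocation of Lemma~\ref{tWidmer} after verifying that the Lipschitz-parametrization property is preserved under the affine transformation $\b x \mapsto t\b x + \b w$, which is immediate because translations are isometries and dilations scale distances by $t$. The restriction $\b w \in [-1,1]^n$ plays no essential role beyond ensuring that the implied constant can be chosen uniformly in $\b w$.
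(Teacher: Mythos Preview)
Your proof is correct and follows exactly the approach the paper intends: the paper's own proof is simply ``This follows from Lemma~\ref{tWidmer}'', and you have spelled out precisely the verification that $t\mathcal D+\mathbf w\in\mathrm{Reg}(n,M,tL)$ together with the volume computation needed to invoke it.
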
\begin{proof}This follows from Lemma~\ref{tWidmer}. \end{proof}

\begin{lemma}
\label{lem:polynomresiduesclasses} 
Let $\mathcal B \in \mathrm{Reg}(n, M, L)$ and let $Q\in \Z[x_1,\ldots, x_n]$ be non-zero.
Then for all $\b c \in \R^n, q\in \mathbb N, R\geq 1 $ with $q\leq R $  we have 
$$\#\{   \b x \in \Z ^{n }\cap (R \mathcal B + \b c)  :  Q(\b x)\neq 0 ,  q \mid Q(\b x) \}
=\varrho_Q(q)\mathrm{vol}(\mathcal B) R^n (1+O(q/R)  ) +O(R^{n-1}),$$ 
where  the implied constant   depends     on $n$, $\mathcal B$ and $Q$,
but is independent of $R$ and $q$. 
\end{lemma}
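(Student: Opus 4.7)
The plan is to partition the set of $\b x$ being counted into residue classes modulo $q$, apply Lemma~\ref{lem:lips} to each class after a change of variables, and finally remove the extra condition $Q(\b x)\neq 0$. First I would momentarily drop that condition and write
\[
N:=\#\{\b x \in \Z^n \cap (R\mathcal B+\b c) : q \mid Q(\b x)\} = \sum_{\substack{\b t \in (\Z/q\Z)^n \\ Q(\b t)\equiv 0 \md q}} \#\{\b x \in \Z^n\cap (R\mathcal B+\b c): \b x \equiv \b t \md q\},
\]
the number of admissible residues being exactly $\varrho_Q(q)\, q^n$ by definition.

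For each $\b t$ I would fix a representative $\b t_0 \in \Z^n$ and substitute $\b x = q\b y + \b t_0$, transforming the inner count into the number of $\b y \in \Z^n$ lying in the region $(R/q)\mathcal B + (\b c-\b t_0)/q$. The assumption $q\leq R$ guarantees $R/q\geq 1$, and after subtracting the integer part of the translation vector componentwise (which does not affect the lattice count), the shift lies in $[-1,1]^n$, so Lemma~\ref{lem:lips} applies with $t=R/q$ and gives $\mathrm{vol}(\mathcal B)(R/q)^n+O((R/q)^{n-1})$. The implied constant depends only on $n$ and on the Lipschitz data of $\partial\mathcal B$, hence is uniform in $q$ and in $\b t$. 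Summing over the $\varrho_Q(q)q^n$ admissible residue classes yields
\[
N = \varrho_Q(q)\,\mathrm{vol}(\mathcal B)\,R^n + O\!\left(\varrho_Q(q)\, q\, R^{n-1}\right),
\]
and since $\mathrm{vol}(\mathcal B)$ may be absorbed into the implied constant (which is allowed to depend on $\mathcal B$), the error term is of the form $\varrho_Q(q)\,\mathrm{vol}(\mathcal B)\, R^n \cdot O(q/R)$.

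It then remains to account for the extra condition $Q(\b x)\neq 0$, which amounts to subtracting the count of $\b x \in \Z^n\cap (R\mathcal B+\b c)$ with $Q(\b x)=0$. Because $Q$ is a non-zero polynomial, a routine induction on $n$ --- fiber over the last coordinate and use that $Q(x_1,\dots,x_{n-1},x_n)$, viewed as a polynomial in $x_n$, has at most $\deg(Q)$ roots unless its leading coefficient vanishes identically in $(x_1,\dots,x_{n-1})$, in which case one applies the inductive hypothesis to a polynomial in fewer variables --- bounds this count by $O(R^{n-1})$ with implied constant depending only on $Q$ and $\mathcal B$. Combining the three steps gives the claimed formula. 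The main (minor) obstacle is simply verifying that the implied constants remain uniform in $q$: both the Lipschitz data of $\mathcal B$ and the arithmetic data of $Q$ are fixed across residue classes, so this uniformity is automatic.
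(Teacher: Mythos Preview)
Your argument is correct and follows essentially the same route as the paper: partition into residue classes modulo $q$, apply Lemma~\ref{lem:lips} to each translated and rescaled copy of $\mathcal B$, sum, and handle the vanishing locus of $Q$ by a standard induction on $n$. The only cosmetic differences are that the paper removes the $Q(\b x)=0$ contribution first rather than last, and it chooses representatives directly in $\prod_i[c_i,c_i+q)$ so that the shift $(\b c-\b y)/q$ already lies in $[-1,1]^n$, whereas you reduce an arbitrary shift by its integer part; both devices achieve the same thing.
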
 

\begin{proof} 
We may assume that $Q$ is non-constant. We will first show that
\[
\# \{\b x \in \Z^n \cap  (R \mathcal B + \b c) : Q(\b x) = 0\} \ll R^{n - 1},
\]
where  the implied constant depends at most on $n$, $\mathcal B$ and $Q$. We will proceed by induction on $n$. The case $n = 1$ is easy, so now suppose that $n > 1$. Observe that we may certainly assume that the variable $x_n$ occurs in $Q$. Expand $Q$ as
\[
Q(x_1, \dots, x_n) = \sum_{i = 0}^m Q_i(x_1, \dots, x_{n - 1}) x_n^i
\]
with $m \geq 1$ and $Q_m(x_1, \dots, x_{n - 1})$ non-zero. By the induction hypothesis we may bound the number of zeros of $Q_m(x_1, \dots, x_{n - 1})$. If $Q_m(x_1, \dots, x_{n - 1}) \neq 0$, then there are at most $m$ possibilities, say $\alpha_1, \dots, \alpha_m$, for $x_n$. We may find $B$ such that $\mathcal{B}$ is contained in $[-B, B]^n$. Then we can employ Lemma~\ref{lem:lips} in     dimension $n-1$ to see that the number of integer vectors in $\b x \in \Z^n \cap  (R[-B, B]^n + \b c) $ for which $x_n=\alpha_j$ is $\ll R^{n-1}$, 
where  the implied constant   depends at most on $n$ and $\mathcal B$. We can thus add back the missing terms 
with $Q(\b x )=0 $ at the cost of a negligible error term depending only on $n$, $\mathcal{B}$ and $Q$.

Note that for every $c\in \mathbb R$ and $y' \in \mathbb Z/q\Z$ there exists a unique $y\in \mathbb Z \cap [c,c+q)$ such that $y\equiv y' \md q$.
Using this for every $y_i$ and $c_i$ allows us to 
to split  in progressions in order to obtain 
\begin{equation}\label{eq:jacklousier}\#\{   \b x \in \Z^n\cap  (R \mathcal B + \b c)  :    q \mid Q(\b x) \}
=\sum_{\substack{\b y \in  \Z^n \cap \mathcal I \\ q\mid Q(\b y )   } }\#\{   \b x \in \Z^n\cap  (R \mathcal B + \b c) :   \b x \equiv \b y \md q \},
\end{equation} where $\mathcal I$ is the product of intervals $\prod_{i=1}^n [c_i,c_i+q)$.
Letting $\mathbf x = \mathbf y+q\mathbf z $ we infer   
$$\#\{   \b x \in \Z^n\cap  (R \mathcal B + \b c) :   \b x \equiv \b y \md q \} =
\#\left \{\mathbf z\in \mathbb Z^n\cap  \Big( \frac{R}{q} \mathcal B + \frac{\b c-\mathbf y}{q}  \Big) \right \} .$$ 
Since $y_i\in [c_i,c_i+q)$  and $R/q \geq 1 $ we can use Lemma~\ref{lem:lips} with $\mathbf w =(\b c -\b y )/q$ to obtain 
$$ 
\#\left \{\mathbf z\in \mathbb Z^n\cap  \Big( \frac{R}{q} \mathcal B + \frac{\b c-\mathbf y}{q}  \Big) \right \} =
\mathrm{vol}(\mathcal B) \frac{R^n}{q^n}+O\left( \frac{R^{n-1} }{q^{n-1} }\right)  ,
$$
where the implied constant depends at most on $n$ and $\mathcal{B}$. Injecting this into~\eqref{eq:jacklousier} gives 
 $$\#\{   \b x \in \Z^n\cap  (R \mathcal B + \b c) :    q \mid Q(\b x) \}
=\varrho_Q(q)(\mathrm{vol}(\mathcal B) R^n +O(q R^{n-1})  )
,$$ which completes the proof.
 \end{proof}
\subsection{Upper bounds}\label{ss:gnrluper}
We have $[-1,1]^n \in \mathrm{Reg}(n, 2^n, 2)$. Thus, by Lemma~\ref{lem:polynomresiduesclasses} with $\mathcal B=[-1,1]^n$ and 
$\b c =\b 0 $ we have 
$$ \#\{   \b x \in \Z ^{n }\cap T[-1,1]^n :  Q(\b x)\neq 0 ,  q \mid Q(\b x) \}
= \varrho_Q(q)2^n   T^n\big(1+O\big(q/T\big)\big)+O(  T^{n-1}),$$
whenever $q\leq T$, where the implied constant depends at most on $n,Q$ and $\mathcal B$. 
In particular, if $q\leq T^{1/2} $ then the right-hand side becomes 
 $$ 
\varrho_Q(q)2^n  T^n(1+O(1/\sqrt T))+O(  T^{n-1/2}).$$
Thus, we can  use  Theorem~\ref{tMain} with 
$$\mathcal A=\{\b x \in \Z^n: Q(\b x )\neq 0 \}, \mathfrak C=\{|Q(\b x )|:\b x \in \mathcal A\}, \chi_T(\b x)=\mathds 1_{[0,T]} (\max|x_i|),$$
since it shows that the assumption of Definition~\ref{def:levdistr}  holds with $$h=\varrho_Q, M(T)=2^n  T^n, \theta=\frac{1}{4n}, \xi=\frac{1}{2n}.$$
Note that   assumption~\eqref{elowaverg} is satisfied with $\kappa=r$ due to~\eqref{eq:wang3}, while 
assumptions~\eqref{eLowPower}-\eqref{eHighPower} hold respectively due to~\eqref{eq:wang}-\eqref{eq:wang2}. Hence for all $T\geq 1 $  we have
$$\sum_{\substack{ \b x \in \Z^n \cap  T [-1,1]^n\\ Q(\b x )\neq 0  } } f(|Q(\b x ) |)
  \ll  T^n \prod_{\substack{  p\leq T^n    } } (1-\varrho_Q(p) ) \sum_{\substack{ a\leq T^n  }} f(a)  \varrho_Q(a),$$
where the implied constant depends at most on $A, Q$ and $n$.   
By~\eqref{eq:wang3} the product over $p\leq T^n$ is asymptotic to $(\log T)^{-r}$.

To complete the proof of Theorem~\ref{thm:nrtnbm} let us note that 
since $f\geq 0 $ and $\mathcal D$ is contained in $ X(\mathcal D) [-1,1]^n$ we can write 
$$ \sum_{\substack{ \mathbf x \in  \mathbb Z^n \cap  \mathcal  D \\ Q(\b x) \neq 0   }} f(|Q(\b x) |)\leq 
\sum_{\substack{ \mathbf x \in  \mathbb Z^n \cap X(\mathcal D)[-1,1]^n \\ Q(\b x) \neq 0   }} f(|Q(\b x) |).$$ 
As we have seen, this is   $$ \ll  X(\mathcal D)^n 
( \log 2X(\mathcal D))^{-r}   \sum_{\substack{ a\leq X(\mathcal D) ^n}} f(a)   
\varrho_Q(a),$$ which is sufficient.

\subsection{Lower bounds}\label{ss:gnrlowerrs}
By assumption $\mathcal D$ contains a set of the form $\b c+ X \mathcal U$, where $\mathcal U$ is the 
unit ball in $\R^n$. Since $f\geq 0 $ we deduce that 
$$ \sum_{\substack{ \mathbf x \in  \mathbb Z^n \cap  \mathcal  D \\ Q(\b x) \neq 0   }} f(|Q(\b x) |)\geq 
\sum_{\substack{ \mathbf x \in  \mathbb Z^n \cap (\b c+ X \mathcal U ) \\ Q(\b x) \neq 0   }} f(|Q(\b x) |).$$ 
We may thus employ Theorem~\ref{thm:lower} with 
$$\mathcal A=\{\b x \in \Z^n: Q(\b x )\neq 0 \}, \mathfrak C=\{|Q(\b x )|:\b x \in \mathcal A\}, \chi_t(\b x)=\mathds 1_{\b c+ X \mathcal U} (\b x).$$
Note that $\mathcal U $ can be parametrised by a single function in $n-1$ variables by using trigonometric functions, thus, 
we can use Lemma~\ref{lem:polynomresiduesclasses} to obtain 
 $$ \#\{   \b x \in \Z ^{n }\cap \b c+ X \mathcal U :  Q(\b x)\neq 0 ,  q \mid Q(\b x) \}
= \mathrm{vol}(\mathcal U)  \varrho_Q(q) X^n+O(  \deg(Q)^{\omega(q)}  X^{n-1}),$$
whenever $q\leq X$, where the implied constant depends at most on $n$ and $Q$.
The proof can now be completed by following the same steps as in the final stage of the proof of Theorem~\ref{thm:lowernair}.

\subsection{Proof of Theorem~\ref{cor:nrtnbmadivisor}}
\label{s:prflastcorol}
When $\mathcal D=[-X,X]^n$, the parameter $X(\mathcal D)$ in Definition~\ref{def:se pente lepta} satisfies $X(\mathcal D)\leq 2X$. Thus, 
Theorem~\ref{thm:nrtnbm} yields that  
$$\sum_{\substack{ \mathbf x \in  \mathbb Z^n \cap [-X,X]^n  \\ Q(\b x) \neq 0   }} \tau_k(|Q(\b x) |)^\ell  \ll 
X^n
(\log X) ^{-1} 
\sum_{\substack{ a\leq X^n}}  \tau_k(a)^\ell  \varrho_Q(a)
.$$ A   lower bound can be given by
employing Theorem~\ref{thm:lowernair} with $\mathcal D=[-X,X]^n$. This is allowed since 
  the sphere in $\mathbb R^n$ with radius $X$   and centre at the origin
is contained in $\mathcal D$. To conclude the proof we use Lemma~\ref{lem:buchanan bus station}
with $G=\tau_k^\ell$. To see that $G$ satisfies the required assumptions 
we recall that for   primes $p$ and all $e\geq 1 $ one has 
$$\tau_k(p^e) = \frac{(e+k-1)(e+k-2)\cdots (e+1) }{(k-1)!}
$$ so that $\tau_k(p)^\ell =k^\ell $ and $\tau_k(p^e)^\ell \leq e^{O_{k,\ell}(1)}
  \ll_{k,\ell} (3/2)^{e}$.
Hence, Lemma~\ref{lem:buchanan bus station} can be employed with $r=1, \lambda=k^\ell$ and $C=3/2$; it yields 
$$\sum_{\substack{ a\leq X}}  \tau_k(a)^\ell  \varrho_Q(a) \asymp (\log X)^{k^\ell  }.$$

\end{document}